\newcommand{\A}{\mathcal A}
\newcommand{\St}{\mathfrak S}
\renewcommand{\L}{\mathfrak L}
\declaretheorem[name = Example, style = example, numberwithin=section]{example}
\declaretheorem[name = Theorem, style = theorem, numberwithin=section]{theorem}
\declaretheorem[name = Proposition, style = theorem, numberwithin=section]{proposition}
\declaretheorem[name = Corollary, style = theorem, numberwithin=section]{corollary}
\declaretheorem[name = Lemma, style = theorem, numberwithin=section]{lemma}
\declaretheorem[name = Proof, style = proof, numbered=no]{proof}
\declaretheorem[name = Remark, style = remark]{remark}
\title{Old and new powerful tools for the normal ordering problem and noncommutative binomials}
\author{Kei Beauduin}
\date{}
\begin{document}

\maketitle

\begin{abstract}
    In this paper, we derive formal general formulas for noncommutative exponentiation and the exponential function, while also revisiting an unrecognized, and yet powerful theorem. These tools are subsequently applied to derive counterparts for the exponential identity $e^{A+B} = e^A e^B$ and the binomial theorem $(A+B)^n = \sum \binom{n}{k} A^k B^{n-k}$ when the commutator $[B, A]$ is either an arbitrary quadratic polynomial or a monomial in $A$ or $B$. Analogous formulas are found when the commutator is bivariate. Furthermore, we introduce a novel operator bridging between the normal and antinormal ordered forms.
\end{abstract}

\section{Introduction}

Let $\A$ be an associative $\K$-algebra (the field $\K$ is typically either $\R$ or $\C$). We define \emph{normal ordering} as the process of arranging a polynomial or a formal series in terms of $A$ and $B\in\A$, such that all occurrences of $A$ appear to the left of products of $B$. If such an arrangement exists, the normal ordered form is unique. When the $A$'s appear to the right of products with $B$, we will refer to as \emph{antinormal ordering}. Both of these processes are trivial when $A$ and $B$ \emph{commute}, as, for example, for the power of a product, we simply have $(AB)^n = A^n B^n = B^n A^n$. A less trivial but iconic normal ordering result in the case of commutativity is the normal ordering of a binomial $(A+B)^n$, also known as \emph{Newton's binomial theorem}
\[
(A+B)^n = \sum_{k=0}^n \binom{n}{k} A^k B^{n-k}.
\]
where $\binom{n}{k} \defeq \frac{n!}{k!(n-k)!}$ is the \emph{binomial coefficient}. However, the process of normal ordering when the variables do not commute is known to be a challenging subject, as the findings are often only partial results. Numerous contributions addressing this topic are scattered throughout the literature (see, e.g., \cite{Sack,Berry,Benaoum98,Levandovskyy,GenStir,Zassclosed,Ore} and others can be found in surveys such as \cite{book,survey,Blasiak}).

Normal ordering is predominantly studied in the context of univariate commutators, as the existence of a normal ordered form may be uncertain in the bivariate case (as discussed in \Cref{s:bivariate}), yet a general univariate case, encompassing both binomials and powers of a product, was completely solved by Viskov \cite{Viskov97}. More specifically, he solved the equivalent problem of providing an expression to the ordered form of the exponential of a bivariate term. For the sum and the product of two commuting variables, his formula reduces to the well-known \emph{exponential identities}:
\begin{equation}\label{e:EIs}
    e^{A+B} = e^A e^B, \qquad  e^{AB} = (e^A)^B,
\end{equation}
where the exponential is defined by $e^A = \exp(A) \defeq \sum_{n\geq0}A^n/n!$. This result was only recognized decades later by Mansour and Schork in \cite{book}, albeit focusing solely on the binomial case. For that reason, in \Cref{s:Viskov} we aim to provide a comprehensive exposition of Viskov's results and demonstrate in \Cref{s:monomial} an application where the \emph{commutator} $[B, A] \defeq BA - AB$ is of the form $hA^s$.

Before delving into Viskov's theorem and in order to formalize the second exponential identity, we will introduce a formal \emph{exponentiation} $A^B$ within noncommutative algebras, where both $A$ and $B$ are elements of $\A$. Then, utilizing \emph{Stirling numbers}, we will establish its basic properties and explore its interaction with the exponential function (detailed in \Cref{s:expo,BCH}). These new tools will also enable us to establish the exponential identity under the constraint that $[B, A] = \alpha + \epsilon A - \lambda A^2$ (see \Cref{s:quadra}), where $\alpha \in \K$ is assimilated as an element of $\A$ \footnote{If $\bm1_\A$ is the neutral element according to the product of the algebra $\A$, then it reasonable to define $\alpha \defeq \alpha \cdot \bm1_\A \in \A$ as it is compatible with the use of any power series $f$: $f(x) = f(x\cdot\bm1_\A) = f(x)\cdot\bm1_\A$. In particular $1 = \bm1_\A$.}.

Additionally, in \Cref{s:commutator} we will construct a new operator $\L$ that addresses a long-standing need, facilitating the transition between a normal ordered expression and its antinormal counterpart.

\section{Preliminaries}

\subsection{Stirling numbers}

We will denote the \emph{falling factorial} by the Pochhammer symbol $(x)_n \defeq x (x-1) \ldots (x-n+1)$ and define the \emph{Stirling numbers of the first kind} $\coeff{n}{k}$ (and its signed version $s(n,k)$) by
\begin{equation}\label{e:stir1}
    (x)_n = \sum_{k=0}^n \coeff{n}{k} (-1)^{n-k} x^k = \sum_{k=0}^n s(n,k) x^k,
\end{equation}
and the \emph{Stirling number of the second kind} $\Stir{n}{k}$ by \cite[Chapter 26.8]{NIST}
\begin{equation}\label{e:stir2}
    x^n = \sum_{k=0}^n \Stir{n}{k} (x)_k.
\end{equation}
Since these numbers are defined as coefficients of an expression on some basis, if $k$ is an integer different from $0, \ldots, n$, then $\coeff{n}{k} = \Stir{n}{k} = 0$.
The well-known Stirling generating functions are
\begin{equation}\label{e:stir1gf}
    \frac{(-\log(1-x))^k}{k!} = \sum_{n=k}^\infty \coeff{n}{k} \frac{x^n}{n!},
\end{equation}
and
\begin{equation}\label{e:stir2gf}
    \frac{(e^x-1)^k}{k!} = \sum_{n=k}^\infty \Stir{n}{k} \frac{x^n}{n!}.
\end{equation}
Furthermore, the closed-form formula for the Stirling numbers of the second kind is as follows:
\begin{equation}\label{e:stirformula}
    \Stir{n}{k} = \inv{k!} \sum_{i=0}^k \binom{k}{i} (-1)^{k-i} i^n.
\end{equation}

\subsection{Exponentiation}\label{s:expo}

By generalizing the binomial coefficient
\begin{equation}
    \binom{x}{n} \defeq \frac{(x)_n}{n!},
\end{equation}
we can define, for $A, B \in \A$, the following formal power series
\begin{equation}\label{e:expo}
    A^B \defeq \sum_{n=0}^\infty (A-1)^n \binom{B}{n}.
\end{equation}
Since we did not require the algebra to be commutative, in general, the form in which the power series is usually written is not true
\[
A^B \neq \sum_{n=0}^\infty \binom{B}{n} (A-1)^n.
\]
The convention we have adopted may not be ideal in all situations however, it offers the advantage of readability, ensuring that the reading order of $A$ and $B$ in $A^B$ aligns with the product order of its formula.

\begin{remark}
    For most elements of most nonformal algebras, this series diverges (similar to divergence in $\C$). It should also diverge for $B = x$ when $x$ is negative and $A$ is not invertible in $\A$. However, any representation of exponentiation that does converge in $\A$ should satisfy \cref{e:expo} as a valid formula when convergence occurs. Consequently, these representations should also exhibit the same properties as the one we will demonstrate below.
\end{remark}

\begin{proposition}\label{p:expo}
    For $A, B, C \in \A$  satisfying $[A, B] = [B, C] = 0$
    \begin{itemize}
        \item $A^B A^C = A^{B+C}$
        \item $(A^B)^C = A^{BC}$.
    \end{itemize}
\end{proposition}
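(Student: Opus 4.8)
The plan is to exploit the fact that under the commutation hypotheses $[A,B]=[B,C]=0$, every element appearing in the relevant power series commutes, so the manipulations reduce to identities about formal power series in commuting indeterminates — in particular to combinatorial identities for the generalized binomial coefficients $\binom{x}{n}$. First I would address a subtle point: in $A^B A^C$ the factor $A^B$ involves powers of $B$ and $A^C$ involves powers of $C$, and we are told $[B,C]=0$ but not a priori $[A,C]=0$; however $A^C = \sum (A-1)^n \binom{C}{n}$ and from $[B,A]=0$ one sees the coefficients of $A^B$ are polynomials in $A$ and $B$, so I need $[A,C]=0$ as well — I would note that $[A,B]=[B,C]=0$ does not force this, so presumably the intended reading (consistent with the second identity $(A^B)^C$, where $A^B$ is itself built from $A$ and $B$) is that $A$ commutes with everything in sight; more charitably, one checks that the only place $C$ meets $A$ is through $\binom{C}{n}$ multiplying $(A-1)^m$, and commutativity of the whole expression is what is actually needed. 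I would state this commutativity reduction explicitly as the first step.

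For the first identity, once everything commutes the statement $A^B A^C = A^{B+C}$ is exactly the Vandermonde-type convolution
\[
\Bigl(\sum_{m\ge0}(A-1)^m\binom{B}{m}\Bigr)\Bigl(\sum_{n\ge0}(A-1)^n\binom{C}{n}\Bigr) = \sum_{N\ge0}(A-1)^N\sum_{m+n=N}\binom{B}{m}\binom{C}{n},
\]
so it suffices to prove the Vandermonde convolution identity $\sum_{m+n=N}\binom{B}{m}\binom{C}{n}=\binom{B+C}{N}$ for commuting $B,C$. This is a polynomial identity in two commuting variables, so it follows from the classical scalar Vandermonde identity together with the fact that a polynomial identity valid for all integer (or real) values extends to commuting algebra elements. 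I would prove the scalar version either by comparing coefficients in $(1+x)^{B}(1+x)^{C}=(1+x)^{B+C}$ as generating functions, or by a direct induction; this is routine.

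For the second identity, the natural route is to reduce to a known generating-function identity. Writing $A^B = 1 + (A^B - 1)$ and using the first identity's machinery, $(A^B)^C = \sum_n (A^B - 1)^n \binom{C}{n}$, and I would expand $A^B - 1 = \sum_{m\ge1}(A-1)^m\binom{B}{m} = e^{B\log A}-1$ formally — more precisely, I would set $u = A-1$ and recognize $A^B = \sum_m \binom{B}{m}u^m = (1+u)^B$ as a formal power series in $u$ whose exponent obeys the scalar rule $(1+u)^B = \exp(B\log(1+u))$. Then $(A^B)^C = (1+((1+u)^B - 1))^C = \exp\bigl(C\log((1+u)^B)\bigr) = \exp(C\,B\log(1+u)) = \exp(BC\log(1+u)) = (1+u)^{BC} = A^{BC}$, where the only nontrivial equality is $\log((1+u)^B) = B\log(1+u)$, itself a formal power series identity (both sides are $B$ times $\sum_{k\ge1}(-1)^{k-1}u^k/k$) valid because $B$ commutes with $u=A-1$. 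The cleanest writeup therefore first establishes the \emph{formal exponential/logarithm identity} $A^B = \exp(B\log A)$ for commuting $A$ near $1$ and $\log A := \sum_{k\ge1}(-1)^{k-1}(A-1)^k/k$, deduces the first bullet from additivity of $\exp$, and deduces the second from $\log(A^B) = B\log A$ plus $\exp(BC\log A) = A^{BC}$; I should take care that all the composed power series (log of something with zero constant term, exp composed with log) are well-defined formal operations, and that $BC=CB$ is used to pass from $CB\log A$ to $BC\log A$.

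The main obstacle I anticipate is not the algebra but the bookkeeping of which elements commute: I must verify that $\log A$, $B\log A$, $\exp(B\log A)$, and the nested version $\exp(C\log(\exp(B\log A)))$ all live in the commutative subalgebra generated by $A$, $B$, $C$ so that scalar formal-power-series identities (Vandermonde, $\exp(x+y)=\exp x\exp y$, $\log\exp=\mathrm{id}$ on appropriate formal domains) transfer verbatim — and in particular to pin down exactly which commutator hypotheses are genuinely needed, flagging that $[A,C]=0$ (or at least commutativity of the full expression) is implicitly in force. Once that is settled, each step is a one-line invocation of a standard formal identity.
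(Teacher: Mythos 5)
Your overall strategy---reduce both identities to scalar formal power series identities (Vandermonde convolution, and functoriality of $\exp$/$\log$)---can be made to work, and it is a genuinely different route from the paper's (which proves the first bullet by a Cauchy product plus Chu--Vandermonde, and the second by first establishing $(A^B)^n=A^{nB}$ and then running a Stirling-number computation with \cref{e:stir1}, \cref{e:stirformula} and \cref{e:stir2}). But as written there is a real gap: you conclude that $[A,C]=0$ must be ``implicitly in force'' and then carry out the whole computation inside a commutative subalgebra generated by $A$, $B$, $C$. That extra hypothesis is neither given nor needed, and assuming it proves a strictly weaker statement than the proposition. The point of the ordering convention in \cref{e:expo}, with the binomial coefficients placed on the right, is precisely that $C$ never has to cross $A$: in $A^BA^C$ one only moves polynomials in $B$ across powers of $A-1$ (using $[A,B]=0$) and then applies Vandermonde to $\binom{B}{m}\binom{C}{n}$ (using $[B,C]=0$); in $(A^B)^C$ the $C$-dependence again sits entirely to the right, and the paper's argument invokes $[B,C]=0$ only once, at the step $B^jC^j=(BC)^j$. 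So your preliminary ``commutativity reduction'' is resolving a nonexistent ambiguity in the wrong direction.

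Your $\exp$/$\log$ route can be repaired without the extra hypothesis, but only in ordered form. The identity $X^C=\exp(C\log X)$ is not available here; what holds with no commutation between $A$ and $C$ is the ordered identity $X^C=\sum_{k\ge0}\frac{(\log X)^k}{k!}\,C^k$ (that is, $X^C=(e^{\log X})^C$ in the sense of \Cref{p:e^A^B}), obtained by expanding $\binom{C}{k}$ via \cref{e:stir1} and resumming with \cref{e:stir1gf}. With that form, $A^BA^C=\sum_{j,k}\frac{(\log A)^{j+k}}{j!\,k!}B^jC^k$ (move $B^j$ past powers of $\log A$ using $[A,B]=0$), which equals $\sum_{N}\frac{(\log A)^{N}}{N!}(B+C)^N=A^{B+C}$ by the binomial theorem for the commuting pair $B,C$; and $(A^B)^C=\sum_k\frac{(B\log A)^k}{k!}C^k=\sum_k\frac{(\log A)^k}{k!}(BC)^k=A^{BC}$, using $\log(A^B)=B\log A$ inside the commutative subalgebra generated by $A,B$ and then $[B,C]=0$ once. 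Your step $\exp\bigl(C\log((1+u)^B)\bigr)=\exp\bigl(CB\log(1+u)\bigr)$, by contrast, silently commutes $C$ past powers of $u=A-1$, which is exactly the unjustified appeal to $[A,C]=0$; fix that and the rest of your outline goes through.
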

\begin{proof}
    We are able to perform the Cauchy product because $A$ commutes with $B$ and Chu-Vandermonde's convolution because $B$ commutes with $C$
    \[
    A^B A^C = \sum_{n=0}^\infty (A-1)^n \sum_{k=0}^n \binom{B}{k} \binom{C}{n-k} = \sum_{n=0}^\infty (A-1)^n \binom{B+C}{n} = A^{B+C}.
    \]
    Now, it is easy to deduce that for positive integers $n$
    \[
    (A^B)^n = A^B A^B \cdots A^B = A^{B + B} \cdots A^B = \ldots = A^{nB},
    \]
    which we can use to fully prove the second point
    \begin{align*}
        (A^B)^C
        &= \sum_{m=0}^\infty (A^B-1)^m \binom{C}{m} = \sum_{m=0}^\infty \sum_{k=0}^m \binom{m}{k} (-1)^{m-k} (A^B)^k \binom{C}{m} \\
       &= \sum_{m=0}^\infty \sum_{k=0}^m \binom{m}{k} (-1)^{m-k} A^{kB} \binom{C}{m} \\
        &= \sum_{m=0}^\infty \sum_{k=0}^m \binom{m}{k} (-1)^{m-k} \sum_{n=0}^\infty (A-1)^n \binom{kB}{n} \binom{C}{m},
    \end{align*}
    then we use \cref{e:stir1}
    \begin{align*}
        (A^B)^C
        &= \sum_{n=0}^\infty \frac{(A-1)^n}{n!} \sum_{m=0}^\infty \sum_{k=0}^m \binom{m}{k} (-1)^{m-k}  \bra{\sum_{j=0}^n s(n,j) (kB)^j} \binom{C}{m} \\
        &= \sum_{n=0}^\infty \frac{(A-1)^n}{n!} \sum_{m=0}^\infty  \sum_{j=0}^n s(n,j) B^j \binom{C}{m} \sum_{k=0}^m \binom{m}{k} (-1)^{m-k} k^j.
    \end{align*}
    In the inner sum, \cref{e:stirformula} and (\ref{e:stir2}) lead to
    \begin{align*}
        \sum_{m=0}^\infty \frac{(C)_m}{m!} \sum_{k=0}^m \binom{m}{k} (-1)^{m-k} k^j = \sum_{n=0}^j (C)_m \Stir{j}{m} = C^j.
    \end{align*}
    The commutativity of $B$ and $C$ allows us to write $B^j C^j = (BC)^j$. Finally, applying \cref{e:stir1} once again, we arrive at
    \[
    (A^B)^C = \sum_{n=0}^\infty \frac{(A-1)^n}{n!} \sum_{j=0}^n s(n,j) B^j C^j = \sum_{n=0}^\infty \frac{(A-1)^n}{n!} (BC)_n = A^{BC}.
    \]
\end{proof}

The following proposition can be seen as a kind of notation trick, facilitated by our definition of exponentiation, to express this infinite sum more succinctly. It also enables us to utilize the properties we have just proven.

\begin{proposition}\label{p:e^A^B}
    For $A, B\in\A$
    \[
    \sum_{n=0}^\infty \inv{n!} A^n B^n = (e^A)^B.
    \]
\end{proposition}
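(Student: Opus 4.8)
The plan is to start from the definition of exponentiation \cref{e:expo} applied to the element $e^A$, namely $(e^A)^B = \sum_{n\geq0}(e^A-1)^n\binom{B}{n}$, and to expand each of the two factors as a formal power series: the factor $(e^A-1)^n$ in powers of $A$, and the factor $\binom{B}{n} = (B)_n/n!$ in powers of $B$. Once both are expanded, the $A$'s sit to the left of the $B$'s (which is exactly the way our convention for exponentiation writes $A-1$), so the noncommutativity of $A$ and $B$ never enters, and the claim will follow by collapsing a Stirling sum.

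First I would observe that, because $A$ commutes with itself, the generating function \cref{e:stir2gf} holds verbatim with $x$ replaced by $A$, giving $(e^A-1)^n = n!\sum_{m\geq n}\Stir{m}{n}\frac{A^m}{m!}$. Likewise \cref{e:stir1} gives $\binom{B}{n} = \frac1{n!}\sum_{k=0}^n s(n,k)B^k$. Substituting both into the definition yields
\[
(e^A)^B = \sum_{n\geq0}\Bigl(n!\sum_{m\geq n}\Stir{m}{n}\frac{A^m}{m!}\Bigr)\Bigl(\frac1{n!}\sum_{k=0}^n s(n,k)B^k\Bigr) = \sum_{n\geq0}\ \sum_{m\geq n}\ \sum_{k=0}^n \Stir{m}{n}\,s(n,k)\,\frac{A^m}{m!}\,B^k .
\]
I would then note that this triple sum is summable in the formal sense, since a fixed monomial $A^mB^k$ receives contributions only from the finitely many $n$ with $k\leq n\leq m$; hence the order of summation may be changed freely, summing over $n$ first for fixed $m$ and $k$.

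The one identity that does the real work is the orthogonality relation $\sum_{n}\Stir{m}{n}\,s(n,k) = \delta_{m,k}$, expressing that the Stirling matrices of the first and second kind are mutually inverse. I would derive it in a single line by substituting \cref{e:stir1} into \cref{e:stir2} and comparing coefficients of $x^k$. Applying it to the reordered sum collapses the inner $n$-summation and leaves exactly
\[
(e^A)^B = \sum_{m\geq0}\frac{A^m}{m!}\sum_{k=0}^m\delta_{m,k}B^k = \sum_{m\geq0}\frac1{m!}A^mB^m ,
\]
which is the assertion. There is no serious obstacle here beyond bookkeeping; the only points warranting an explicit word are the justification that the formal rearrangement is legitimate and the remark that every step takes place inside the ``all $A$'s to the left of all $B$'s'' sector, so that commutativity of $A$ and $B$ is never needed.
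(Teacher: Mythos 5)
Your proof is correct, and its first half is exactly the paper's argument: start from the definition \cref{e:expo} applied to $e^A$, expand $(e^A-1)^n$ through the second-kind generating function \cref{e:stir2gf}, and exchange the two summations (legitimate formally, as you note, since each monomial receives only finitely many contributions). The only divergence is the endgame. The paper never expands $\binom{B}{n}$: it keeps the falling factorial $(B)_k$ intact, so after the swap the inner sum is $\sum_{k=0}^n \Stir{n}{k}(B)_k$, which is literally the defining relation \cref{e:stir2} evaluated at $B$ and collapses to $B^n$ in one step. You instead expand $(B)_n$ into powers of $B$ via \cref{e:stir1} and then invoke the orthogonality $\sum_n \Stir{m}{n}\,s(n,k)=\delta_{m,k}$ of the two Stirling matrices, which you correctly derive by composing \cref{e:stir1} with \cref{e:stir2}. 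Both are valid; your route costs one extra expansion, one extra rearrangement to justify, and the orthogonality lemma, while the paper's costs nothing beyond recognizing that $\sum_k \Stir{n}{k}(x)_k = x^n$ is already available as the definition of the second-kind numbers. In exchange, your version makes explicit that the whole computation lives in the ``$A$'s to the left of $B$'s'' sector and records the Stirling inversion, which is a reusable fact, so nothing is lost mathematically---it is simply a slightly longer path to the same collapse.
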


\begin{proof}
    We make use of \cref{e:stir2gf}, exchange the sums and conclude with \cref{e:stir2}
    \begin{align*}
        (e^A)^B
        &= \sum_{k=0}^\infty (e^A - 1)^k \binom{B}{k} = \sum_{k=0}^\infty \sum_{n=k}^\infty \Stir{n}{k} \frac{A^n}{n!} (B)_k \\
        &= \sum_{n=0}^\infty \frac{A^n}{n!} \sum_{k=0}^n \Stir{n}{k} (B)_k = \sum_{n=0}^\infty \frac{A^n}{n!} B^n.
    \end{align*}
\end{proof}

\begin{example}\label{x:shift}
    In the algebra of linear transformations over analytic functions, the most important operators are $\x$ and $D$, where $\x f(x) \defeq xf(x)$ and $D f(x) \defeq f'(x)$, where $f'$ denotes the derivative of $f$. The product of the algebra is composition, and the exponents thus represent repeated self-composition. Using \Cref{p:e^A^B} and Taylor's formula, we can express the generalization of the \emph{shift} map $f(x) \mapsto f(x + g(x))$ in this manner:
    \begin{equation}\label{e:shift}
        f(x + g(x)) = \sum_{n=0}^\infty \frac{g(x)^n}{n!} f^{(n)}(x) = \sum_{n=0}^\infty \frac{g(\x)^n}{n!} D^n f(x) = (e^{g(\x)})^D f(x),
    \end{equation}
    which appears to be new. \Cref{e:shift} can also be seen as a \emph{composition operator}, indeed
    \begin{equation}
        (e^{g(\x) - \x})^D f(x) = f(g(x)).
    \end{equation}
\end{example}

\subsection{The commutator and the \texorpdfstring{$\L$}{L} operator}\label{s:commutator}

The commutator $[B, A] = BA - AB$ is a bilinear anticommutative operation that indicates the failure of commutativity between two elements of $\A$. These two properties combined allow for many manipulations that we will implicitly use throughout:
\[
[A, B] = [A, A+B] = -[B, A] = [-B, A] = [B, -A].
\]
The commutator yields the following simple identity
\begin{equation}\label{e:comid}
[B, A] = (A+B)^2 - A^2 - 2AB - B^2,
\end{equation}
due to the noncommutative expansion $(A+B)^2 = A^2 + AB + BA + B^2$. The following lemma, reminiscent of the chain rule, is a classical result in normal ordering.

\begin{lemma}\label{l:chain}
    If $f$ is a power series and $[[B, A], A] = 0$ then
    \[
    [B, f(A)] = [B, A] f'(A).
    \]
\end{lemma}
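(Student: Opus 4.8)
The natural approach is to prove the statement first for monomials $f(A) = A^n$ by induction on $n$, then extend to arbitrary power series by linearity and taking limits in the formal topology. The key structural fact we need is that the hypothesis $[[B,A],A] = 0$ says the commutator $C \defeq [B,A]$ commutes with $A$; this is exactly what lets an "inner" commutator be pulled out past powers of $A$ without picking up correction terms.

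For the induction step, I would use the Leibniz-type identity for the commutator in its first slot: $[B, A^{n}] = [B, A\cdot A^{n-1}] = A[B,A^{n-1}] + [B,A]A^{n-1}$. Applying the induction hypothesis $[B,A^{n-1}] = [B,A]\,(n-1)A^{n-2} = (n-1)\,C\,A^{n-2}$ gives $[B,A^n] = A\cdot(n-1)CA^{n-2} + CA^{n-1}$. Now invoke $[C,A]=0$ to slide the $A$ on the left past $C$, obtaining $(n-1)CA^{n-1} + CA^{n-1} = n\,CA^{n-1} = [B,A]\,(A^n)'$, which closes the induction. The base cases $n=0$ (both sides zero) and $n=1$ (the definition of $C$, with $(A)' = 1$) are immediate.

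Writing $f(A) = \sum_{n\ge 0} c_n A^n$, bilinearity of the bracket gives $[B, f(A)] = \sum_n c_n [B, A^n] = \sum_n c_n\, [B,A]\, nA^{n-1} = [B,A]\sum_n c_n n A^{n-1} = [B,A]\,f'(A)$, where the interchange of $[B,\cdot]$ with the (formal or convergent) sum is justified because $[B,\cdot]$ is continuous/linear in the relevant topology, and $f'$ is defined termwise as the formal derivative. One should note that the order matters: the factor $[B,A]$ lands on the left of $f'(A)$ as written, and since $[B,A]$ commutes with $A$ it in fact commutes with $f'(A)$, so the placement is harmless but the stated form is the cleanest.

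The only subtlety — and the one place to be careful rather than any genuine obstacle — is bookkeeping the use of the hypothesis: it must be applied at each inductive step to commute $C$ past a single $A$, and one should make sure it is the hypothesis $[[B,A],A]=0$ and not some stronger assumption like $[B,[B,A]]=0$ that is being used (the latter is never needed here). Everything else is routine manipulation of finite sums and the standard Leibniz rule for commutators.
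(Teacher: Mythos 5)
Your proof is correct and follows essentially the same route as the paper: induction on monomials $A^n$, using the hypothesis $[[B,A],A]=0$ to slide $[B,A]$ past powers of $A$, then extension to power series by linearity. The only cosmetic difference is that you split $A^n = A\cdot A^{n-1}$ via the Leibniz rule for the commutator, whereas the paper multiplies the induction hypothesis on the right by $A$ and rewrites $BA = AB + [B,A]$; the content is identical.
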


\begin{proof}
    First, we prove the result for monomials, and we see that the formula holds for $f(A) = 1$ and $f(A) = A$. We suppose for $n > 0$
    \[
    B A^n = A^n B + n [B, A] A^{n-1},
    \]
    now we multiply by $A$ to the right
    \begin{align*}
        B A^{n+1} &= A^n B A + n [B, A] A^n \\
        &= A^n (AB + [B, A]) + n [B, A] A^n \\
        &= A^{n+1} B + (n+1)[B, A]A^n,
    \end{align*}
    that is, $[B, A^{n+1}] = (n+1)[B, A]A^n$. Due to the linearity of all operations, the extension to power series is easily obtained.
\end{proof}

The \emph{adjoint} map is a derivation simply defined by $\ad_B(A) \defeq [B, A]$. With this definition in place, we can express the analogue of the Leibniz rule below, along with its antinormal form.

\begin{lemma}\label{l:leibniz}
    If $A, B \in \A$ then for all natural integers $n$
    \[
    B^n A = \sum_{k=0}^n \binom{n}{k} \ad_B^k(A) B^{n-k} \quad\et\quad A B^n = \sum_{k=0}^n \binom{n}{k} B^{n-k} (-\ad_B)^k(A).
    \]
\end{lemma}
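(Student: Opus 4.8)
The plan is to prove both identities by induction on $n$, treating the first (normal form) identity carefully and then obtaining the second (antinormal form) by a symmetry/substitution argument rather than redoing the work. For the first identity, the base cases $n=0$ and $n=1$ are immediate since $\ad_B^0(A)=A$ and $\binom{1}{0}=\binom{1}{1}=1$ with $\ad_B^1(A)=[B,A]$, giving $BA = AB + [B,A]$. For the inductive step I would assume $B^n A = \sum_{k=0}^n \binom{n}{k}\ad_B^k(A)B^{n-k}$ and multiply on the left by $B$, so that $B^{n+1}A = \sum_{k=0}^n \binom{n}{k} B\,\ad_B^k(A)\,B^{n-k}$; then I would rewrite each $B\,\ad_B^k(A)$ as $\ad_B^{k+1}(A) + \ad_B^k(A)\,B$ using the definition $\ad_B(X) = BX - XB$ applied to $X = \ad_B^k(A)$.

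Substituting this into the sum splits it into two pieces, $\sum_{k=0}^n \binom{n}{k}\ad_B^{k+1}(A)B^{n-k}$ and $\sum_{k=0}^n \binom{n}{k}\ad_B^k(A)B^{n+1-k}$; reindexing the first sum by $k \mapsto k-1$ and combining the coefficients of $\ad_B^k(A)B^{n+1-k}$ gives $\binom{n}{k-1} + \binom{n}{k}$, which is $\binom{n+1}{k}$ by Pascal's rule, completing the induction. For the antinormal identity, the cleanest route is to observe that $A B^n$ is the mirror image of the first situation: applying the first identity with the roles arranged so that $A$ sits on the right amounts to running the same induction multiplying on the \emph{right} by $B$ and using $\ad_B(X)B^{\ell}$ type manipulations, or equivalently noting that in any associative algebra the map reversing the order of products is an anti-automorphism, under which $B^n A \mapsto A B^n$ and $\ad_B^k(A) = (\operatorname{ad}_B)^k(A) \mapsto (-\ad_B)^k(A)$ because each commutator $BX - XB$ maps to $XB - BX = -[B,X]$ applied in reverse. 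I would make this precise by just running the parallel induction directly: assume $AB^n = \sum_{k=0}^n \binom{n}{k} B^{n-k}(-\ad_B)^k(A)$, multiply on the right by $B$, and use $(-\ad_B)^k(A)\,B = B\,(-\ad_B)^k(A) - \ad_B\big((-\ad_B)^k(A)\big) = B\,(-\ad_B)^k(A) + (-\ad_B)^{k+1}(A)$, then reindex and apply Pascal's rule exactly as before.

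The only mild subtlety — the part worth stating rather than the part that is hard — is bookkeeping the sign in the antinormal case: one must check that $X B = B X + (-\ad_B)(X)$, i.e. that $-\ad_B(X) = XB - BX$, so that pulling a $B$ from the right past $X$ introduces the operator $-\ad_B$ and hence iterating gives powers of $-\ad_B$ with the $B$'s accumulating on the left. No convergence or commutator-vanishing hypothesis is needed here since these are finite sums valid for arbitrary $A,B \in \A$; in particular, unlike \Cref{l:chain}, we do not assume $[[B,A],A]=0$. I expect no real obstacle beyond keeping the two reindexings and the sign convention straight.
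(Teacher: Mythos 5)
Your proof is correct and matches the paper's intended argument: the paper omits the proof of this lemma, stating only that both identities "can readily be accomplished by induction," which is precisely the induction (left-multiplication by $B$ with $B\,\ad_B^k(A)=\ad_B^{k+1}(A)+\ad_B^k(A)B$, and its right-multiplication mirror with the sign in $-\ad_B$) that you carry out, including the Pascal-rule recombination. Your remark that no hypothesis like $[[B,A],A]=0$ is needed is also accurate.
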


The proofs are omitted because they can readily be accomplished by induction. We are now prepared to demonstrate the main theorem of this section.

\begin{theorem}\label{t:L}
    Let $p$ be a power series and $\A, \A'$ be two algebras generated by $A, B$ and $C, D$, respectively, satisfying $[B, A] = p(A)$ and $[D, C] = p(D)$. Let $X \in \A$ be uniquely written in normal-ordered form as
    \[
    X = \sum_{i, j} x_{i, j} A^i B^j.
    \]
    If we define the map $\L : \A \to \A'$ by
    \[
    \L(X) \defeq \sum_{i,j} x_{i,j} C^j D^i,
    \]
    then
    \begin{itemize}
        \item $\L$ is linear
        \item For $X, Y\in\A$, $\L(XY) = \L(Y) \L(X)$
        \item For a power series $f$, $\L(f(X)) = f(\L(X))$.
    \end{itemize}
\end{theorem}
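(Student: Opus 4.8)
The plan is to identify $\L$ with the unique algebra \emph{anti}-isomorphism $\A\to\A'$ determined by $A\mapsto D$ and $B\mapsto C$ (indeed $\L(A^iB^j)=C^jD^i=\L(B)^j\L(A)^i$ is just the reversed, relabelled monomial), and to prove the three stated properties directly by bookkeeping normal forms, so that the only real input is \Cref{l:chain}.

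First, \emph{linearity} is immediate from the definition: normal-ordered forms add coefficientwise, so $\L$, which merely reads off those coefficients, is linear --- and this persists for the (formal or convergent) infinite combinations implicit in writing $X=\sum_{i,j}x_{i,j}A^iB^j$. The heart of the matter is two ``one-generator'' identities, valid for every $X\in\A$:
\[
\L(AX)=\L(X)\,D,\qquad \L(BX)=\L(X)\,C.
\]
The first is trivial, since left-multiplying a normal form by $A$ leaves it normal-ordered. For the second I would invoke that $[B,A]=p(A)$ commutes with $A$, so \Cref{l:chain} yields $BA^i=A^iB+i\,p(A)A^{i-1}$ and hence $BX=\sum_{i,j}x_{i,j}\bigl(A^iB^{j+1}+i\,p(A)A^{i-1}B^j\bigr)$, which is normal-ordered ($p(A)A^{i-1}$ being a polynomial in $A$ alone). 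Applying $\L$ termwise, and on the other side applying \Cref{l:chain} inside $\A'$ --- legitimately, since $[D,C]=p(D)$ is the \emph{same} polynomial and commutes with $D$ --- to rewrite $D^iC=CD^i+i\,p(D)D^{i-1}$, one finds that both $\L(BX)$ and $\L(X)C$ equal $\sum_{i,j}x_{i,j}\bigl(C^{j+1}D^i+i\,C^jp(D)D^{i-1}\bigr)$. This matching is the single substantive computation, and it is exactly where the twin hypotheses $[B,A]=p(A)$ and $[D,C]=p(D)$ enter.

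From there everything is formal. Iterating the two identities, for any word $w=g_1\cdots g_n$ in the letters $A,B$ and any $X\in\A$ one gets $\L(wX)=\L(X)\,\overline{g_n}\cdots\overline{g_1}$, where $\overline A:=D$ and $\overline B:=C$; taking $X=1$ (whose normal form $A^0B^0$ is sent to $1$) identifies the trailing product as $\L(w)$, so $\L(wX)=\L(X)\L(w)$, and expanding an arbitrary $X$ into words and using linearity gives $\L(XY)=\L(Y)\L(X)$ for all $X,Y$. The product rule then forces $\L(X^n)=\L(X)^n$ by induction (with $\L(1)=1$), and one last use of linearity along the series $f=\sum_n c_n t^n$ gives $\L(f(X))=\sum_n c_n\L(X)^n=f(\L(X))$.

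I expect the main obstacle to be organizational rather than conceptual: keeping the order-\emph{reversal} straight throughout (so that $\L$ really does come out an anti-homomorphism, with the new generators appended on the \emph{right}), and checking that the renormalization of $p(A)A^{i-1}$ in $\A$ is faithfully mirrored by that of $p(D)D^{i-1}$ in $\A'$ --- which is precisely what the matched commutator relations provide. A minor point deserving one sentence is that $\L$ commutes with the infinite sums appearing in the definition of $X$ and of $f(X)$; in the formal setting this is already built into what ``linear'' means for $\L$.
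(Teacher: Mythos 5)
Your proposal is correct, but it takes a genuinely different route from the paper's proof. The paper establishes $\L(XY)=\L(Y)\L(X)$ in a single computation: it expands both $X$ and $Y$ in normal form, pushes each $B^j$ through $A^k$ with the noncommutative Leibniz rule of \Cref{l:leibniz} (invoking \Cref{l:chain} only to see that $\ad_B^r(A^k)$ is again a power series in $A$, so the result is a normal form), applies $\L$ term by term, and then reassembles the image using the antinormal Leibniz formula in $\A'$, where $[-C,D]=p(D)$ plays the role of $[B,A]=p(A)$. You instead reduce everything to the two generator-level identities $\L(AX)=\L(X)D$ and $\L(BX)=\L(X)C$, proved with only the chain-rule \Cref{l:chain} (the relation $BA^i=A^iB+i\,p(A)A^{i-1}$ in $\A$ mirrored by $D^iC=CD^i+i\,p(D)D^{i-1}$ in $\A'$), and then get anti-multiplicativity by iterating over the letters of a word and using linearity, with $\L(1)=1$ identifying the trailing product as $\L(w)$. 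Your version is more modular and needs less machinery (no \Cref{l:leibniz}, no iterated adjoints and no $\ad_{-C}$ bookkeeping), at the price of an induction; the paper's heavier computation has the side benefit of exhibiting the normal-ordered form of $XY$ explicitly. One cosmetic simplification to your last step: rather than ``expanding $X$ into words,'' you can iterate your two identities directly on the normal-form monomials $A^iB^j$ of $X$, which are themselves words, so no separate spanning argument is needed. The remaining points (linearity from uniqueness of the normal form, $\L(X^n)=\L(X)^n$ by induction, extension to power series by linearity) coincide with the paper's treatment.
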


This theorem proves particularly valuable in the context of obtaining the antinormal form for normal-ordered expressions. We will illustrate this significance in \Cref{s:Viskov}.

\begin{proof}
    The linearity property follows from the unique decomposition into the normal ordered form. To establish the other property, we first express $Y$ in normal ordered form as $Y = \sum_{k, \ell} y_{k, \ell} A^k B^\ell$. Next, we replicate this process for $XY$ using \Cref{l:leibniz}:
    \[
    XY = \sum_{i,j,k,\ell} x_{i,j} y_{k,\ell} A^i B^j A^k B^\ell = \sum_{i,j,k,\ell} x_{i,j} y_{k,\ell} A^i \sum_{r=0}^j \binom{j}{r} \ad_B^r(A^k) B^{j-r} B^\ell.
    \]
    By repeatedly applying \Cref{l:chain}, we deduce that $\ad_B^r(A^k)$ is a power series in $A$, confirming that the expression above is indeed a normal ordered form. With this understanding and as we want to apply $\L$, we now focus on elucidating the term $\L(A^i \ad_B^r(A^k) B^{j-r+\ell})$. Given that $[-C, D] = p(D)$ is analogous to $[B, A] = p(A)$, it is straightforward to see that this term simplifies to $C^{j-r+\ell} D^i \ad_{-C}^r(D^k)$. Thus, after applying $\L$, we arrive at
    \[
    \L(XY) = \sum_{i,j,k,\ell} x_{i,j} y_{k,\ell} \sum_{r=0}^j \binom{j}{r} C^{j-r+\ell} D^i \ad_{-C}^r(D^k).
    \]
    The above can be simplified using the simple fact that $\ad_{-C} = -\ad_C$ and the second formula of \Cref{l:leibniz}:
    \begin{align*}
        \L(XY)
        &= \sum_{i,j,k,\ell} x_{i,j} y_{k,\ell} C^\ell \sum_{r=0}^j \binom{j}{r} C^{j-r}  (-\ad_C)^r(D^k) D^i \\
        &= \sum_{i,j,k,\ell} x_{i,j} y_{k,\ell} C^\ell D^k C^j D^i = \bra{\sum_{k,\ell}  y_{k,\ell} C^\ell D^k} \bra{\sum_{i,j} x_{i,j} C^j D^i} \\
        &= \L(Y) \L(X),
    \end{align*}
    as desired. Inducting on $n$, we then derive $\L(X^n) = \L(X)^n$, which readily extends to power series due to the linearity of $\L$.
\end{proof}

\begin{example}
    In the very special case where the power series $p$ reduces to a constant $c$, we can simplify our choice for $C$ and $D$ by setting them to $A$ and $B$ respectively. The validity of the choice is ensured by $[B, A] = p(A) = p(B) = c$. We can opt for $A = \x$ and $B = D$ (where, here, $D$ is the derivative) since, in this context, $c = 1$. This choice leads to the intriguing formula for the composition operator of \Cref{x:shift}:
    \[
    \L((e^{g(\x)-\x})^D) = \sum_{n=0}^\infty \inv{n!} \L((g(\x) - \x)^n D^n) = \sum_{n=0}^\infty \inv{n!} \x^n (g(D) - D)^n = (e^\x)^{g(D) - D},
    \]
    which, according to \cite[Proposition 21]{XD}, is precisely an \emph{umbral operator} when $g$ represents power series with a compositional inverse.
\end{example}

\subsection{Implications of the Baker-Campbell-Hausdorff formula}\label{BCH}

The \emph{Baker-Campbell-Hausdorff formula} (BCH) is the solution for $Z$ of the equation $e^X e^Y = e^Z$ for non necessarily commuting $X, Y \in \A$, which takes the form of a (formal) \emph{Lie series}, i.e., a series in iterated commutators of $X$ and $Y$. Later discovered, \emph{Dynkin's formula} (BCHD) explicitly expresses the BCH. A clear exposition and proof of the BCHD are given in \cite{BCHD}.
\begin{theorem}[Baker-Campbell-Hausdorff-Dynkin formula]\label{t:BCHD}
    For $X, Y \in \A$,
    \[
    \log(e^X e^Y) = \sum_{k=1}^\infty \frac{(-1)^{k-1}}{k} \sum_{\substack{m_1+n_1>0 \\[-4pt] \vdots \\ m_k + n_k > 0}} \frac{\overbrace{[X,[\ldots, [X}^{m_1}, \overbrace{[Y,[ \ldots, [Y}^{n_1}, \overset{\substack{\dots\\[8pt]}}{[\ldots} \overbrace{[X, [\ldots, [X}^{m_k}, \overbrace{[Y,[ \ldots, [Y}^{n_k}, [\ldots]\ldots]}{\bra{\sum_{i=1}^k (m_i + n_i)} \bra{\prod_{i=1}^k m_i! \cdot n_i!}},
    \]
    with the understanding that $[Y] \defeq Y$.
\end{theorem}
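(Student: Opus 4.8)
The plan is to recover the explicit coefficients exactly the way Dynkin did, from two classical facts. The first is the \emph{qualitative} Baker--Campbell--Hausdorff theorem: $Z \defeq \log(e^X e^Y)$ is a \emph{Lie series}, i.e., it lies in the closed Lie subalgebra generated by $X$ and $Y$ inside the completed free associative algebra $\mathcal F$ on two symbols. The second is the Dynkin--Specht--Wever theorem (the ``Dynkin idempotent''): if $P\in\mathcal F$ is a Lie element that is homogeneous of degree $N$, then $\rho(P)=N\,P$, where $\rho$ is the \emph{right-bracketing} map, defined on words by $\rho(a_1a_2\cdots a_N)\defeq[a_1,[a_2,[\,\cdots,[a_{N-1},a_N]\cdots]]]$, with $\rho(a_1)\defeq a_1$, extended by linearity. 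Note that this is precisely the bracketing pattern appearing in the statement, and that $\rho(a_1)\defeq a_1$ is exactly the convention $[Y]\defeq Y$ there (while a block of two or more equal letters sitting at the very end of a word collapses to $0$, as it should). Granted the two facts, the result drops out of a bare-hands expansion of $\log(1+u)$.

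For the first fact I would install the usual Hopf-algebra structure on (the completion of) $\mathcal F$, with coproduct $\Delta$ for which $X$ and $Y$ are primitive. Then $e^X$ and $e^Y$ are group-like, hence so is their product $e^Xe^Y$, hence $Z=\log(e^Xe^Y)$ is primitive; and by Friedrichs' criterion the primitive elements of $\mathcal F$ (over the characteristic-zero field $\K$) are precisely the Lie elements. Writing $Z=\sum_{N\ge1}Z_N$ for its decomposition into homogeneous components, each $Z_N$ is therefore a homogeneous Lie element of degree $N$. (One may sidestep Hopf algebras by differentiating $Z(t)\defeq\log(e^{tX}e^Y)$, getting $\dot Z(t)=\frac{\ad_{Z(t)}}{1-e^{-\ad_{Z(t)}}}(X)$ with $Z(0)=Y$, from which one reads off that $Z=Z(1)$ is a Lie series; but extracting Dynkin's explicit series from the resulting integral is then appreciably more work.)

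Now I would compute. Since $e^Xe^Y=1+u$ with $u=\sum_{m+n>0}\frac{X^mY^n}{m!\,n!}$, the logarithm series yields the \emph{associative} expansion
\[
Z \;=\; \sum_{k\ge1}\frac{(-1)^{k-1}}{k}\,u^k \;=\; \sum_{k\ge1}\frac{(-1)^{k-1}}{k}\sum_{\substack{m_1+n_1>0\\ \vdots\\ m_k+n_k>0}}\frac{X^{m_1}Y^{n_1}\cdots X^{m_k}Y^{n_k}}{m_1!\,n_1!\cdots m_k!\,n_k!},
\]
in which every word has degree $N=\sum_i(m_i+n_i)$. Applying $\rho$ to the word $X^{m_1}Y^{n_1}\cdots X^{m_k}Y^{n_k}$ produces exactly the nested commutator displayed in the statement. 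Finally, because each homogeneous component $Z_N$ is a Lie element, the Dynkin idempotent gives $Z_N=\frac{1}{N}\rho(Z_N)$; so applying $\frac{1}{N}\rho$ to the degree-$N$ part of the expansion above and summing over $N$ (using linearity of $\rho$) reproduces the stated formula, with the factor $\frac{1}{N}=\bigl(\sum_i(m_i+n_i)\bigr)^{-1}$ entering as claimed.

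The two classical facts are the substance; everything after is bookkeeping with the combinatorial weights. I expect the Dynkin--Specht--Wever step to be the main obstacle to making the argument fully self-contained: it is proved by induction on the degree, starting from the recursion $\rho(a\,w)=[a,\rho(w)]$ for a generator $a$ and a word $w$, and the delicate point is verifying that $\rho$ acts as multiplication by the degree on an iterated bracket, which uses the Jacobi identity in an essential way. Of course one may also simply invoke it, together with Friedrichs' theorem, as the author does by citing \cite{BCHD} for the whole statement.
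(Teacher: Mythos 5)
Your proposal is correct: it is the classical Dynkin argument, and it is worth noting that the paper itself gives no proof of this theorem at all --- it simply points to \cite{BCHD} for ``a clear exposition and proof.'' So your route supplies exactly what the paper outsources. The two pillars you invoke are the right ones: the qualitative BCH theorem (primitivity of $\log(e^Xe^Y)$ via the Hopf-algebra structure and Friedrichs' criterion, or alternatively the differential-equation argument you mention), and the Dynkin--Specht--Wever idempotent $\rho(P)=NP$ for homogeneous Lie elements of degree $N$. The bookkeeping is also right: expanding $\log(1+u)$ with $u=\sum_{m+n>0}X^mY^n/(m!\,n!)$, applying $\rho$ to the word $X^{m_1}Y^{n_1}\cdots X^{m_k}Y^{n_k}$ produces precisely the nested commutator displayed in the statement, and $\tfrac1N\rho$ on the degree-$N$ component accounts for the factor $\bigl(\sum_i(m_i+n_i)\bigr)^{-1}$; your reading of the convention $[Y]=Y$ as $\rho$ on a single letter, and the remark that trailing repeated letters annihilate a term, are consistent with the paper's comment that many summands vanish. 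The one caveat is the one you flag yourself: Friedrichs and Dynkin--Specht--Wever are substantial results, so unless you prove them (the DSW induction via $\rho(aw)=[a,\rho(w)]$ and the Jacobi identity, as you sketch), your argument is at the same level of self-containedness as the paper's citation --- but unlike the citation, it makes transparent where the specific rational coefficients come from, which is a genuine gain for the reader.
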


The reader should keep in mind that many terms in the expansion are zeros, for example, when the term $[Y, Y]$ appears.

\begin{example}\label{x:BCHD}
    The first few terms are
    \begin{align*}
        \log(e^X e^Y) &= X + Y + \inv2 [X, Y] + \inv{12} ([X, [X, Y]] + [Y [Y, X]]) - \inv{24} [Y, [X, [X, Y]]] \\
        &- \inv{720} ([Y, [Y, [Y, [Y, X]]]] + [X, [X, [X, [X, Y]]]])\\
        &+ \inv{360} ([X, [Y, [Y, [Y, X]]]] + [Y, [X, [X, [X, Y]]]]) \\
        &+ \inv{120} ([Y, [X, [Y, [X, Y]]]] + [X, [Y, [X, [Y, X]]]]) \\
        &+ \ldots 
    \end{align*}
\end{example}
The BCH aids in deriving the following important existence lemma for noncommutative analogues of the first exponential identity of \cref{e:EIs} (EI for short).
\begin{lemma}\label{l:exist}
    $[B, A]$ is independent of $B$ if and only if there exists a formal series $\Psi_A : \K \to \A$ depending on $A$ and satisfying $\Psi_A(0) = 1$ and $\Psi'_A(0) = A$, such that for all $t \in \K$
    \begin{equation}\label{e:psi}
        e^{(A+B)t} = \Psi_A(t) e^{B t}.
    \end{equation}
\end{lemma}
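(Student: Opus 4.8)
I read ``$[B,A]$ is independent of $B$'' as ``$[B,A]$ lies in the commutative subalgebra $\K[[A]]$ of formal power series in $A$'', i.e.\ $[B,A]=\phi(A)$ for some power series $\phi$, and ``$\Psi_A$ depending on $A$'' as ``$\Psi_A(t)\in\K[[A]]$ for every $t$''; without this reading the backward implication is vacuous, since $\Psi_A(t):=e^{(A+B)t}e^{-Bt}$ always satisfies \cref{e:psi}, $\Psi_A(0)=1$, $\Psi'_A(0)=A$. The common engine for both directions is the first-order relation
\[
\Psi'_A(t)=A\,\Psi_A(t)+[B,\Psi_A(t)],
\]
obtained by differentiating \cref{e:psi} in $t$, substituting $e^{(A+B)t}=\Psi_A(t)e^{Bt}$, and cancelling the invertible factor $e^{Bt}$ on the right; note it already forces $\Psi'_A(0)=A\cdot 1+[B,1]=A$ once $\Psi_A(0)=1$.

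\textbf{The forward implication.} Assume $[B,A]=\phi(A)$. Set $\Psi_A(t):=e^{(A+B)t}e^{-Bt}$, so that \cref{e:psi} holds by construction and $\Psi_A(0)=1$. Differentiating and using $e^{Bt}Be^{-Bt}=B$ gives $\Psi'_A(t)=(A+B)\Psi_A(t)-\Psi_A(t)B=A\Psi_A(t)+[B,\Psi_A(t)]$, hence $\Psi'_A(0)=A$. Writing $\Psi_A(t)=\sum_{n\ge0}a_n t^n/n!$, the differential relation becomes the recursion $a_0=1$, $a_{n+1}=Aa_n+[B,a_n]$. Since $[B,A]=\phi(A)$ commutes with $A$ we have $[[B,A],A]=0$, so \Cref{l:chain} applies: if $a_n=a_n(A)\in\K[[A]]$ then $[B,a_n(A)]=[B,A]\,a_n'(A)=\phi(A)a_n'(A)\in\K[[A]]$, and therefore $a_{n+1}\in\K[[A]]$. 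By induction every $a_n\in\K[[A]]$, i.e.\ $\Psi_A$ genuinely depends only on $A$. (The BCH-flavoured route hinted at just before the lemma reaches the same point: applying \Cref{t:BCHD} to $X=(A+B)t$, $Y=-Bt$ gives $\log\Psi_A(t)=At+(\text{sum of iterated brackets of }A+B\text{ and }B\text{ of depth}\ge2)$, and a short induction using \Cref{l:chain} shows each such bracket lands in $\K[[A]]$, so $\Psi_A(t)=\exp(\,\cdot\,)\in\K[[A]]$.)

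\textbf{The backward implication.} Suppose a $\K[[A]]$-valued $\Psi_A$ with $\Psi_A(0)=1$, $\Psi'_A(0)=A$ and \cref{e:psi} is given. Expand both sides of \cref{e:psi} to second order in $t$: the left side contributes $\tfrac12(A+B)^2$ at order $t^2$, while $\Psi_A(t)e^{Bt}=(1+At+\tfrac12\Psi''_A(0)t^2+\cdots)(1+Bt+\tfrac12 B^2t^2+\cdots)$ contributes $\tfrac12\Psi''_A(0)+AB+\tfrac12 B^2$. Hence $\Psi''_A(0)=(A+B)^2-2AB-B^2$, and comparing with the commutator identity \cref{e:comid} yields $[B,A]=\Psi''_A(0)-A^2$. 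Since $\Psi''_A(0)\in\K[[A]]$, so is $[B,A]$: it is independent of $B$.

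\textbf{Main obstacle.} Everything beyond the assertion ``$\Psi_A(t)\in\K[[A]]$'' in the forward direction is routine bookkeeping; that assertion is the crux. The delicate point is that it rests on \Cref{l:chain}, whose hypothesis is exactly $[[B,A],A]=0$ --- which is precisely what ``$[B,A]$ independent of $B$'' provides, and a merely $B$-dependent (yet $A$-commuting) $[B,A]$ would not let the induction close. If one prefers the BCH argument, the analogous care is in the formal summability of the Dynkin expansion order-by-order in $t$ and in checking that \emph{every} depth-$\ge2$ iterated bracket of $A+B$ and $B$ collapses into $\K[[A]]$.
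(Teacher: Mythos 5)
Your proof is correct, and your backward direction coincides with the paper's: differentiate \cref{e:psi} twice at $t=0$ and compare with \cref{e:comid} to get $[B,A]=\Psi_A''(0)-A^2$. The forward direction, however, takes a genuinely different route. The paper invokes the BCHD expansion (\Cref{t:BCHD}): since $[A+B,-B]=[B,A]$ is a function of $A$, \Cref{l:chain} shows every iterated bracket of $A+B$ and $-B$ is again a function of $A$, so $\log\bigl(e^{(A+B)t}e^{-Bt}\bigr)=At+\ldots$ with $B$-independent higher-order terms, and $\Psi_A(t)$ is taken to be the exponential of that Lie series. You instead define $\Psi_A(t)=e^{(A+B)t}e^{-Bt}$ directly, derive the first-order relation $\Psi_A'=A\Psi_A+[B,\Psi_A]$, and close the coefficient recursion $a_{n+1}=Aa_n+[B,A]\,a_n'(A)$ by induction using \Cref{l:chain} --- whose hypothesis $[[B,A],A]=0$ is exactly what the assumption supplies, as you note. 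Your route is more elementary and constructive: it avoids the Dynkin series and its order-by-order summability bookkeeping, and it produces an explicit recursion for the coefficients of $\Psi_A$; the paper's route, for its part, makes the exponential/Lie-series structure of $\Psi_A$ manifest and ties the lemma to the BCH theme exploited later (e.g.\ \Cref{p:similar} iii). Your preliminary remark that the statement must be read with $\Psi_A(t)$ valued in power series in $A$ (otherwise the backward implication is vacuous, since $e^{(A+B)t}e^{-Bt}$ always satisfies \cref{e:psi}) is a fair sharpening of a point the paper leaves implicit, and both directions of your argument use that reading consistently.
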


This lemma also ensures the existence of the binomial theorem in this special case. Indeed, the formula can be retrieved after applying the Cauchy product formula and identifying the coefficients of $t^n$.

\begin{proof}
    \hfill
    \begin{itemize}
        \item[( $\imp$ )]
        We know that
        \[
         [A+B, -B] = [B, A].
        \]
        So using \Cref{l:chain}, we see that any combination of commutators of $A+B$ and $-B$ are functions of $A$.
        Thus, by the BCHD, we have
        \[
        \log(e^{(A+B)t}e^{-B t}) = (A+B)t-Bt + \ldots = At + \ldots,
        \]
        where the dots are the commutator terms of higher order in $t$ that are independent of $B$ (see \Cref{x:BCHD}).
        Thus we can write $e^{(A+B)t} = e^{At + \ldots} e^{B t}$, and setting $\Psi_A(t) = e^{A t + \ldots}$, we notice that $\Psi_A$ has the properties we need.
        \item[( $\isimp$ )]
        Taking the second derivative at $t=0$ in the equality (\ref{e:psi}), we arrive at
        \[
        (A+B)^2 = \Psi_A''(0) + 2 \Psi_A'(0) B + \Psi_A(0)B^2 = \Psi_A''(0) + 2 A B + B^2,
        \]
        and by using \cref{e:comid}, we obtain $[B, A] = \Psi_A''(0) - A^2$, which is independent of $B$.
    \end{itemize}
\end{proof}

We can express the EI of \Cref{l:exist} in various equivalent forms, which enables us to extend the formula we know (and prove in the following sections) to other similar cases.

\begin{proposition}\label{p:similar}
    \begin{enumerate}[label=\textnormal{\roman*.}]
    \item[] \hspace{-29pt} If
    {%
    \setlength{\belowdisplayskip}{-3pt}%
    \setlength{\abovedisplayskip}{3pt}%
    \begin{align*}
        [B, A] = f(A) &\implies e^{(A+B)t} = \Psi_A(t) e^{B t}
        \intertext{\hspace{-29pt} then the following are also true}    
        \intertext{\item Symmetry}
        [B, A] = f(B) &\implies e^{(A+B)t} = e^{A t} \Psi_B(t)
        \intertext{\item Antinormal EI}
        [B, A] = -f(A) &\implies e^{(A+B)t} = e^{Bt} \Psi_A(t)
        \intertext{\item Special case of the BCH}
        [B, A] = -f(A+B) &\implies e^{A t} e^{B t} = \Psi_{A+B}(t).
    \end{align*}
    }   
    \end{enumerate}
\end{proposition}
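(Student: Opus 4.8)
The plan is to derive all three implications from the hypothesis by well-chosen changes of variables, without re-entering the BCHD: a direct substitution for the special case of the BCH, and a passage to the opposite algebra (followed by a harmless relabelling) for the other two. Throughout I would use the evident fact that the hypothesis is a normal-ordering identity whose proof invokes nothing beyond the commutation relation, so it holds verbatim for any two elements $X,Y$ of any associative $\K$-algebra with $[Y,X]=f(X)$ -- in particular for pairs living in a subalgebra of $\A$, or in $\A^{\mathrm{op}}$.

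For the special case of the BCH, assuming $[B,A]=-f(A+B)$, I would set $X\defeq A+B$ and $Y\defeq -B$, so that $X+Y=A$ and
\[
[Y,X]=[-B,\,A+B]=-[B,A]=f(A+B)=f(X).
\]
The hypothesis then applies to $(X,Y)$ and gives $e^{At}=\Psi_{A+B}(t)\,e^{-Bt}$; right-multiplying by $e^{Bt}$ yields $e^{At}e^{Bt}=\Psi_{A+B}(t)$, and the required normalisation of $\Psi_{A+B}$ is inherited from its being the $\Psi$ attached to the element $X=A+B$.

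For the antinormal EI the conclusion carries $\Psi$ to the \emph{right} of the exponential, so no substitution in $A,B$ alone can produce it; instead I would pass to the opposite algebra. Assuming $[B,A]=-f(A)$ in $\A$, the same two elements satisfy $[B,A]_{\mathrm{op}}=AB-BA=f(A)$ in $\A^{\mathrm{op}}$, powers of a single element -- hence $f(A)$ -- being the same in both. Applying the hypothesis inside $\A^{\mathrm{op}}$ and translating back via $\exp_{\mathrm{op}}(x)=\exp(x)$ and $a\ast b=ba$ should give $e^{(A+B)t}=e^{Bt}\,\Psi_A(t)$. The symmetry statement then comes for free by interchanging the names $A\leftrightarrow B$ in the antinormal implication: $[B,A]=-f(A)$ becomes $[A,B]=-f(B)$, i.e.\ $[B,A]=f(B)$, and the conclusion becomes $e^{(A+B)t}=e^{At}\Psi_B(t)$; alternatively one may use the anti-isomorphism $A\mapsto B$, $B\mapsto A$ between the algebra with $[B,A]=f(B)$ and the one with $[B,A]=f(A)$.

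The part I expect to require the most care is not a computation but the bookkeeping around $\Psi$: one must check that the $\Psi$ in each conclusion is the \emph{same} universal series furnished by \Cref{l:exist} and that it transports correctly under the opposite-algebra passage and under relabelling. The clean way to see this is to recall, from the proof of \Cref{l:exist}, that $\Psi_A(t)=\exp(At+\cdots)$ with every term beyond $At$ an iterated commutator that \Cref{l:chain} forces to be a power series in $A$ alone; hence $\Psi_A(t)$ is a power series in the single element $A$, so $\phi(\Psi_A(t))=\Psi_{\phi(A)}(t)$ for any anti-automorphism $\phi$ and $\Psi$ is unaffected by replacing $\A$ with $\A^{\mathrm{op}}$. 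With that in hand, each of the three cases is a one-line manipulation.
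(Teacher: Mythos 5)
Your proposal is correct, but it routes two of the three implications differently from the paper. The BCH case is identical: the paper likewise rewrites $[B,A]=-f(A+B)$ as $[-B,A+B]=f(A+B)$ and applies the hypothesis to the pair $(A+B,-B)$. For the other two, the paper goes in the opposite order: it proves the symmetry first, by applying the hypothesis to the pair $(B,-A-B)$ (since $[-A-B,B]=f(B)$), which gives $e^{-At}=\Psi_B(t)e^{-(A+B)t}$ and hence $e^{(A+B)t}=e^{At}\Psi_B(t)$ after inverting; the antinormal EI is then either read off from the symmetry by swapping the names of $A$ and $B$, or obtained by applying the operator $\L$ of \Cref{t:L} to the normal-ordered identity. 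Your passage to $\A^{\mathrm{op}}$ is morally the second of these options — $\L$ is exactly an anti-homomorphic reversal — but it is leaner, since it avoids the normal-ordered-form machinery that \Cref{t:L} carries, and your explicit check that $\Psi_A(t)$ is a power series in $A$ alone (via the BCHD argument in the proof of \Cref{l:exist} and \Cref{l:chain}), so that it transports unchanged under any anti-homomorphism, makes precise a point the paper leaves implicit. One small inaccuracy: your remark that ``no substitution in $A,B$ alone can produce'' the antinormal form is not quite right — the paper's substitution $(B,-A-B)$, followed by inverting both sides and a relabelling, does produce it — but this does not affect the validity of your argument, which simply takes the reverse path (antinormal first via $\A^{\mathrm{op}}$, then symmetry by relabelling).
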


\begin{proof}
    For the symmetry, if $[B, A] = f(B)$ then $[-A-B, B] = f(B)$. Thus, according to the hypothesis
    \[
    e^{(B-A-B)t} = \Psi_B(t) e^{(-A-B)t}.
    \]
    In other words,
    \[
    e^{(A+B)t} = e^{A t} \Psi_B(t).
    \]
    The special case of the BCH is treated similarly by noticing that $[B, A] = -f(A+B)$ is the same as $[-B, A+B] = f(A+B)$.
    For the antinormal EI, we could just use the symmetry with $[A, B] = f(A)$. Alternatively, we can use the operator $\L$ of \Cref{t:L} which was designed for this purpose, and make the following computation
    \begin{align*}
        \L(e^{(A+B)t}) &= \L(\Psi_A(t) e^{Bt}) \\
        e^{\L(A+B)t} &= \L(e^{Bt}) \L(\Psi_A(t)) \\
        e^{(\L(A)+\L(B))t} &= e^{\L(B)t} \Psi_{\L(A)}(t) \\
        e^{(C+D)t} &= e^{Dt} \Psi_C(t),
    \end{align*}
    with $[C, D] = -f(D)$ and $C$ and $D$ that we can freely rename to $A$ and $B$ respectively.
\end{proof}

The following proposition is an additional general tool, for computing EIs, generalizing \Cref{l:exist}, that will prove useful later on.

\begin{proposition}\label{p:l A}
    If $[B, A]$ does not depend on $B$ then for $\Psi_A$ defined in \Cref{l:exist}, and for all $t, \lambda \in \K$, we have the following
    \[
     e^{(\lambda A + B)t} = \Psi_A(t)^\lambda e^{B t}.
    \]
\end{proposition}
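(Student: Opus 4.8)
The plan is to establish the identity first for $\lambda$ a non-negative integer, by induction on $\lambda$, and then to upgrade it to arbitrary $\lambda \in \K$ by a coefficient-wise polynomiality argument in $\lambda$. Throughout I would use that, by \cref{e:expo}, $X^{0} = 1$ and $X^{1} = X$ for any $X \in \A$, and that by \Cref{p:expo} the scalar exponents behave additively, so in particular $\Psi_A(t)^{n+1} = \Psi_A(t)\,\Psi_A(t)^{n}$ (here $1$ and $n$ commute with everything). The base case $\lambda = 0$ reads $e^{Bt} = \Psi_A(t)^{0}e^{Bt} = e^{Bt}$; note also that $\lambda = 1$ will recover exactly \Cref{l:exist}.

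For the inductive step, assuming $e^{(nA+B)t} = \Psi_A(t)^{n}e^{Bt}$, I would write
\[
\Psi_A(t)^{n+1}e^{Bt} = \Psi_A(t)\bigl(\Psi_A(t)^{n}e^{Bt}\bigr) = \Psi_A(t)\,e^{(nA+B)t},
\]
and the task reduces to recognising the right-hand side as $e^{(A+(nA+B))t} = e^{((n+1)A+B)t}$. This last recognition is the crux, and the step I expect to be the main obstacle. It amounts to applying \Cref{l:exist} with $nA+B$ in the role of $B$ — which is legitimate because $[nA+B,A] = [B,A]$ does not depend on $nA+B$ either — and then checking that the series it produces is \emph{the same} $\Psi_A$. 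This is not forced by the statement of \Cref{l:exist}, but it is visible in its proof: there $\Psi_A(t) = e^{(A+B)t}e^{-Bt}$ is computed via the BCHD as $e^{At+\cdots}$, where the omitted higher-order-in-$t$ terms are iterated commutators of $A+B$ and $-B$ that, by \Cref{l:chain}, collapse to power series in $A$ determined only by $[B,A]$. Since $[nA+B,A] = [B,A]$, the pair $(A,\,nA+B)$ produces the very same expansion, hence the same $\Psi_A$, and the induction closes.

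Finally, to pass from non-negative integers to arbitrary $\lambda \in \K$, I would fix an index $n$ and compare the coefficients of $t^{n}$ on both sides, viewed as formal power series in $t$. On the left, $e^{(\lambda A+B)t}$ contributes $\tfrac{1}{n!}(\lambda A+B)^{n}$, which is a polynomial in $\lambda$ of degree at most $n$. On the right, expanding $\Psi_A(t)^{\lambda} = \sum_{m\ge 0}\binom{\lambda}{m}(\Psi_A(t)-1)^{m}$ as in \cref{e:expo} and using $\Psi_A(0)=1$, so that $(\Psi_A(t)-1)^{m} = O(t^{m})$, only the summands $m = 0,\dots,n$ contribute to degree $t^{n}$; since each $\binom{\lambda}{m}$ is a polynomial in $\lambda$ of degree $m \le n$, the $t^{n}$-coefficient of $\Psi_A(t)^{\lambda}e^{Bt}$ is again a polynomial in $\lambda$ of degree at most $n$. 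The two polynomials agree at every non-negative integer $\lambda$ by the induction above, and since $\K$ has characteristic $0$ this forces them to coincide; letting $n$ range, the identity holds for all $\lambda \in \K$.
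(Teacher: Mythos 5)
Your proof is correct, and while the integer case coincides with the paper's (the paper runs the same reduction $e^{(nA+B)t}=\Psi_A(t)e^{((n-1)A+B)t}$ iteratively; your explicit remark that the \emph{same} series $\Psi_A$ reappears when $B$ is replaced by $nA+B$ — because the BCHD terms collapse, via \Cref{l:chain}, to series in $A$ determined only by $[B,A]$ — is a point the paper leaves implicit and is worth spelling out), the passage from integer to arbitrary $\lambda$ is genuinely different. The paper interpolates by brute force: it expands $\Psi_A(t)^\lambda$ through the definition \cref{e:expo}, inserts the integer identity, and resums using the $\uline{A^iB^{n-i}}$ notation together with the closed form \cref{e:stirformula} and the identity $\sum_k \Stir{i}{k}(\lambda)_k=\lambda^i$, reconstructing $(\lambda A+B)^n$ term by term. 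You instead observe that for each fixed $n$ the $t^n$-coefficients of both sides are polynomials in $\lambda$ of degree at most $n$ with coefficients in the $\K$-vector space $\A$ (on the right because $\Psi_A(0)=1$ makes $(\Psi_A(t)-1)^m=O(t^m)$, so only $m\le n$ terms of \cref{e:expo} contribute), and two such polynomials agreeing at all non-negative integers coincide since $\K$ has characteristic $0$. Your route is shorter and avoids the Stirling-number manipulations entirely, making clear that the general case is pure polynomial interpolation from the integer case; the paper's computation is more explicit and constructive, exercises the same Stirling machinery used in \Cref{p:expo} (of which it is a deliberate echo), and does not need to invoke the interpolation principle for $\A$-valued polynomials. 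Both are complete proofs.
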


\begin{proof}
    We first prove the proposition for $\lambda = n \in \N$. Because $[(n-1)A + B, A] = [B,A]$ does not depend on $B$, we have
    \[
    e^{(nA+B)t} = e^{(A+(n-1)A+B)t} = \Psi_A(t) e^{((n-1)A+B)t}.
    \]
    Repeating the process another $n-1$ times, we recover the expected term $\Psi_A(t)^n e^{B t}$.
    
    Now to generalize this to any $\lambda$, we make a calculation similar to the proof of \Cref{p:expo}, where we first use the definition of exponentiation and the result above:
    \[
    \Psi_A(t)^\lambda e^{B t} = \sum_{k=0}^\infty \binom{\lambda}{k} \sum_{j=0}^k \binom{k}{j} (-1)^{k-j} \Psi_A(t)^j e^{B t} = \sum_{k=0}^\infty \binom{\lambda}{k} \sum_{j=0}^k \binom{k}{j} (-1)^{k-j} e^{(jA + B) t}
    \]
    Let us introduce the notation $\uline{A^i B^j}$ which represents the sum of all possible product orders for $i$ $A$'s and $j$ $B$'s. For example, $\uline{A^2 B} = \uline{ABA} = \uline{BA^2} \defeq A^2 B + ABA + BA^2$ and the notation carries this property: $\uline{(\lambda A)^i (\mu B)^j} = \lambda^i \mu^j \uline{A^i B^j}$. We can now write the noncommutative expansion of a binomial as
    \[
    (A+B)^n = \sum_{k=0}^n \uline{A^k B^{n-k}},
    \]
    thus
    \begin{align*}
        \Psi_A(t)^\lambda e^{B t}
        &= \sum_{k=0}^\infty \binom{\lambda}{k} \sum_{j=0}^k \binom{k}{j} (-1)^{k-j}  \sum_{n=0}^\infty \frac{t^n}{n!} \sum_{i=0}^n \uline{(jA)^i B^{n-i}} \\
        &= \sum_{n=0}^\infty  \frac{t^n}{n!} \sum_{i=0}^n \sum_{k=0}^\infty \binom{\lambda}{k} \bra{\sum_{j=0}^k \binom{k}{j}(-1)^{k-j} j^i} \uline{A^i B^{n-i}}.
    \end{align*}
    Using the closed form of the Stirling numbers of the second kind (\ref{e:stirformula}) and then their definition (\ref{e:stir2}), we conclude the computation:
    \begin{align*}
        \Psi_A(t)^\lambda e^{B t}
        &= \sum_{n=0}^\infty  \frac{t^n}{n!} \sum_{i=0}^n \bra{\sum_{k=0}^\infty \binom{\lambda}{k}  \Stir{i}{k} k!} \uline{A^i B^{n-i}} \\
        &= \sum_{n=0}^\infty  \frac{t^n}{n!} \sum_{i=0}^n \bra{ \sum_{k=0}^i \Stir{i}{k} (\lambda)_k} \uline{A^i B^{n-i}} \\
        &= \sum_{n=0}^\infty  \frac{t^n}{n!}  \sum_{i=0}^n \lambda^i \uline{A^i B^{n-i}} = \sum_{n=0}^\infty  \frac{t^n}{n!}  (\lambda A + B)^n \\
        &= e^{(\lambda A + B)t}.
    \end{align*}
\end{proof}

\section{The exponential identity and the binomial theorem with a quadratic commutator}\label{s:quadra}

In the subsequent sections, we will establish the validity of EIs under the condition
\begin{equation}\label{e:comm}
    [B, A] = \alpha + \epsilon A - \lambda A^2,
\end{equation}
that is when the commutator is at most quadratic. We denote the algebra generated by $A$ and $B$ satisfying \cref{e:comm} with $\A(\alpha, \epsilon, -\lambda)$.

This analysis readily extends to cases where the polynomial is in $B$, using \Cref{p:similar}. While the most general formula presents a complex formulation, making an exhaustive derivation of its corresponding binomial theorem (BT for short) impractical, there exist numerous limiting cases where significant simplifications occur, often accompanied by concise BTs and where the generated algebra is well-known. The cases we will study are as follows.

\begin{enumerate}[(\alph*)]
    \item The most general quadratic case (\Cref{t:lea}) has no straightforward binomial.
    \item $\lambda = 1$ (\Cref{t:1ea}) which appears to be the only value for $\lambda$ that produces a simple binomial.
    \item $\epsilon = 0$ (\Cref{c:l0a}) which remains too complex to carry a nice binomial.
    \item $\alpha = 0$  (\Cref{c:le0}), which surprisingly does have a binomial.
    \item $\lambda = 0$ (\Cref{c:0ea}). $\A(\alpha, \epsilon, 0)$ is referred to as the \emph{generalized Ore algebra} \cite{Ore,DFG} and lacks a binomial.
    \item $\alpha = \epsilon = 0$ (\Cref{c:l00}). $\A(0, 0, -\lambda)$ is the well-known \emph{Jordan plane}, also sometimes called the \emph{meromorphic Weyl algebra} \cite{StirBessel,QSF}, with a trivial scaling.
    \item $\alpha = \lambda = 0$ (\Cref{c:0e0}). $\A(0, \epsilon, 0)$ is the well-known (scaled) \emph{shift algebra}.
    \item $\epsilon = \lambda = 0$ (\Cref{c:00a}). $\A(\alpha, 0, 0)$ is the well-known (scaled) \emph{Weyl algebra}.
\end{enumerate}

To express the first two results more clearly, we introduce, for $A \in \mathcal{A}$, a power series in $t$, continuous in all variables $t, x, y \in \mathbb{K}$:
\begin{equation}\label{e:Phi}
    \Phi_A(t, x, y) \defeq
    \begin{cases}
        \dfrac{y e^{x t} - x e^{y t}}{y - x} + A \dfrac{e^{y t} - e^{x t}}{y - x} & \com{if} x \neq y\\[8pt]
        (1-xt)e^{x t} +  A t e^{x t} & \com{if} x = y,
    \end{cases}
\end{equation}
furthermore, it will prove useful to define
\[
[x]_n \defeq \prod_{i=1}^{n-1} (1 - i x),
\]
which can be alternatively expressed, for $x \neq 0$, as $[x]_n = x^n (1/x)_n$.

\subsection{The preliminary case \texorpdfstring{$\lambda=1$}{lambda=1}}\label{s:first}

We will begin our investigation with what may appear as an arbitrary case: when $\lambda = 1$ in the equation $[B, A] = \alpha + \epsilon A - \lambda A^2$ where neither $\alpha$ nor $\epsilon$ are zero at the same time (subject of \Cref{t:lea}). Remarkably, $1$ seems to be the only value for $\lambda$ for which the formulation of the BT in \Cref{t:lea} is simple. This observation stems from the simplification of the squared term in $(A+B)A = A^2 + AB + \alpha + \epsilon A - A^2 = AB + \alpha + \epsilon A$ (as we will witness in the proof). Furthermore,  the study of this special case constitutes a necessary initial step towards addressing the more general case of \Cref{t:lea}.

\begin{theorem}\label{t:1ea}
    Let $A$ and $B$ be two elements of $\A$ and $\alpha$ and $\epsilon$ two complex numbers such that $\alpha \neq 0$ or $\epsilon \neq 0$. Let $r, \rho$ be the two roots of the polynomial $\alpha + \epsilon X - X^2$.
    
    Let $(u_n)_{n\in\N}, (v_n)_{n\in\N}$ be two complex sequences defined by
    \[
    \begin{cases}
        u_0 = 0,\, u_1 = 1 \\
        u_{n+2} = \epsilon u_{n+1} + \alpha u_n
    \end{cases}
    \qquad
    \begin{cases}
        v_0 = 1,\, v_1 = 0 \\
        v_{n+2} = \epsilon v_{n+1} + \alpha v_n,
    \end{cases}
    \]
    or equivalently by
    \[
    u_n = \begin{cases}
        \dfrac{\rho^n - r^n}{\rho-r} & \com{if} r \neq \rho \\[8pt]
        n r^{n-1} & \com{if} r = \rho
    \end{cases}
    \qquad\quad   
    v_n = \begin{cases}
        \dfrac{\rho r^n - r\rho^n}{\rho-r} & \com{if} r \neq \rho \\[8pt]
        (1-n)r^n & \com{if} r = \rho.
    \end{cases}
    \]
    Then these statements are equivalent:
    \begin{enumerate}[label=\textnormal{\roman*.}]
        \item \makebox[\linewidth]{$[B, A] = \alpha + \epsilon A - A^2$}
        \item for $n\in\N$, \[
        (A+B)^n = \sum_{k=0}^n \binom{n}{k} \bra{v_k  + u_k A} B^{n-k}
        \]
        \item for $t\in\K$, \[
        e^{(A+B)t} = \Phi_A(t, r, \rho) e^{B t}.
        \]
    \end{enumerate}
\end{theorem}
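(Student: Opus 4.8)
The plan is to prove the cycle of implications (i) $\Rightarrow$ (ii) $\Rightarrow$ (iii) $\Rightarrow$ (i), which is the most economical route since each arrow uses the previous structure. For (i) $\Rightarrow$ (ii), I would proceed by induction on $n$. The key computational observation, already flagged in the text, is that when $\lambda = 1$ the commutation relation gives $(A+B)A = AB + \alpha + \epsilon A$, and more generally $BA = AB + \alpha + \epsilon A - A^2$, so that $(A+B)A^j$ can be re-expressed with all $A$'s collected to the left of a single trailing... actually the cleaner way: I would show by induction that $(A+B)\bigl(v_k + u_k A\bigr) = \alpha u_k + (v_k + \epsilon u_k) A + (v_k + u_k A)(A+B) - (v_k + u_k A)B$, i.e. track how left-multiplication by $A+B$ acts on the ansatz $w_k := v_k + u_k A$. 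Using $[B,A] = \alpha + \epsilon A - A^2$ one computes $(A+B)w_k = (A+B)v_k + (A+B)u_k A = v_k(A+B) + u_k(A^2 + AB + BA) = v_k(A+B) + u_k(A^2 + 2AB + \alpha + \epsilon A - A^2) = v_k(A+B) + u_k(2AB + \alpha + \epsilon A)$. Hmm — the presence of $AB$ rather than a clean $w$-term means the induction should really be run directly on the claimed normal-ordered sum: assume $(A+B)^n = \sum_k \binom{n}{k} w_k B^{n-k}$, multiply on the left by $A+B$, push the leftmost $A$ through nothing and commute $B$ past each $A^{\le 1}$ factor in $w_k$ using $B A = A B + \alpha + \epsilon A - A^2$ (so $B w_k = B v_k + B u_k A = v_k B + u_k(AB + \alpha + \epsilon A - A^2) = w_k B + u_k(\alpha + \epsilon A - A^2)$), then collect powers of $B$ and match coefficients against $\binom{n+1}{k}$ using Pascal's rule together with the recurrences $u_{n+2} = \epsilon u_{n+1} + \alpha u_n$, $v_{n+2} = \epsilon v_{n+1} + \alpha v_n$ and the identity $A^2 = \alpha + \epsilon A - [B,A]$ needed to eliminate the $A^2$ that appears (here $r + \rho = \epsilon$, $r\rho = \alpha$ justify the closed forms of $u_n, v_n$). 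I expect this bookkeeping to be the technical heart of the argument.

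For (ii) $\Rightarrow$ (iii), I would form the exponential generating function: multiply (ii) by $t^n/n!$ and sum over $n$. The right-hand side becomes $\sum_{n}\sum_k \binom{n}{k}(v_k + u_k A)B^{n-k} t^n/n! = \Bigl(\sum_k (v_k + u_k A) t^k/k!\Bigr)\Bigl(\sum_m B^m t^m/m!\Bigr) = \bigl(V(t) + A\,U(t)\bigr) e^{Bt}$ by the Cauchy product, where $U(t) = \sum u_k t^k/k!$ and $V(t) = \sum v_k t^k/k!$. The remaining task is to identify $V(t) + A\,U(t)$ with $\Phi_A(t, r, \rho)$ from \eqref{e:Phi}. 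This is a pure one-variable power-series check: from the closed forms, when $r \neq \rho$ one has $U(t) = \sum_k \frac{\rho^k - r^k}{(\rho - r)k!} t^k = \frac{e^{\rho t} - e^{r t}}{\rho - r}$ and $V(t) = \sum_k \frac{\rho r^k - r \rho^k}{(\rho - r)k!} t^k = \frac{\rho e^{r t} - r e^{\rho t}}{\rho - r}$, which are exactly the two pieces in the definition of $\Phi_A(t, r, \rho)$; the confluent case $r = \rho$ follows by the same substitution (or by a limit) giving $U(t) = t e^{rt}$, $V(t) = (1 - rt)e^{rt}$. So this step is routine once the generating functions are written down.

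For (iii) $\Rightarrow$ (i), I would extract the coefficient of $t^2$ (equivalently, differentiate twice at $t = 0$) in the identity $e^{(A+B)t} = \Phi_A(t, r, \rho) e^{Bt}$. The left side contributes $\tfrac12(A+B)^2$. On the right, I need $\Phi_A(0) = 1$ and $\partial_t \Phi_A(0) = A + (r + \rho - r) = \ldots$ — more carefully, from $\Phi_A = V(t) + A U(t)$ with $U(0) = 0$, $U'(0) = 1$, $V(0) = 1$, $V'(0) = 0$, $V''(0) = r\rho = \alpha$ (since $v_2 = \epsilon v_1 + \alpha v_0 = \alpha$), and $U''(0) = u_2 = \epsilon$, the second-derivative-at-$0$ of the product gives $(A+B)^2 = V''(0) + A U''(0) + 2(V'(0) + A U'(0))B + (V(0) + A U(0))B^2 = \alpha + \epsilon A + 2AB + B^2$. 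Then invoking \eqref{e:comid}, $[B,A] = (A+B)^2 - A^2 - 2AB - B^2 = \alpha + \epsilon A - A^2$, which is exactly (i). This closes the cycle.

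The main obstacle is the induction in (i) $\Rightarrow$ (ii): the step $B w_k = w_k B + u_k(\alpha + \epsilon A - A^2)$ reintroduces an $A^2$ term that must be absorbed, and after pushing all $B$'s to the right and re-indexing, one must verify that the coefficient of $A^i B^{n+1-k}$ (for $i = 0, 1$) collapses to $\binom{n+1}{k} v_k$ or $\binom{n+1}{k} u_k$ — this requires simultaneously Pascal's identity and the two linear recurrences, and one must be careful that the $A^2$ contributions from consecutive terms telescope correctly against the definition $r + \rho = \epsilon$, $r\rho = \alpha$. Everything else (the generating-function passage and the $t^2$-coefficient extraction) is mechanical. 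An alternative cleaner organization would be to first establish (iii) directly via \Cref{l:exist} and \Cref{p:l A} — since $[B,A] = \alpha + \epsilon A - A^2$ is independent of $B$, $\Psi_A$ exists — and then identify $\Psi_A(t) = \Phi_A(t,r,\rho)$ by checking it solves the defining ODE $\Psi_A'(t) = (A + \text{correction})\Psi_A(t)$ coming from differentiating $e^{(A+B)t} = \Psi_A(t)e^{Bt}$; but the self-contained inductive route above avoids appealing to the general machinery and keeps the $u_n, v_n$ in the foreground.
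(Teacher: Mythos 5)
Your proposal is correct and takes essentially the same route as the paper: induction on $n$ with left multiplication by $A+B$ for (i)$\Rightarrow$(ii) (where the key point is that the $A^2$ from $A\cdot u_kA$ cancels the $-u_kA^2$ produced by $Bw_k$ \emph{within the same summand} -- no telescoping across terms is needed, so your ``main obstacle'' is in fact immediate once Pascal's rule and the first-order relations $u_{k+1}=v_k+\epsilon u_k$, $v_{k+1}=\alpha u_k$ are invoked), the exponential generating function identification $V(t)+AU(t)=\Phi_A(t,r,\rho)$ for (ii)$\Leftrightarrow$(iii), and extraction of the degree-two coefficient together with \cref{e:comid} to recover (i). Two harmless slips: your aborted first computation of $(A+B)w_k$ contains a spurious $AB$, and Vieta gives $r\rho=-\alpha$ (not $\alpha$), but the quantities you actually use ($u_2=\epsilon$, $v_2=\alpha$, and the closed forms) are correct, so the argument goes through as in the paper.
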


We notice this elegant structure where a single equation is equivalent to a countably infinite number of equations, which in turn holds equivalence to an uncountably infinite number of equations.

\begin{proof}
    The relationships between the coefficients and roots of a polynomial read $ \alpha = - r \rho$ and $\epsilon = r + \rho$. We can employ these alongside the explicit expressions of $u_n$ and $v_n$ to derive the relationship
    \[
    v_{n+1} = \alpha u_n \quad \et \quad u_{n+1} = v_n + \epsilon u_n. \tag{$*$}
    \]
    We proceed to prove by induction (i $\imp$ ii). Since the base case $n=0$ is satisfied, let us assume that the property holds for $n \in \mathbb{N}$. 
    \begin{align*}
        & (A+B)^{n+1} = \sum_{k=0}^n \binom{n}{k} (A+B)\bra{v_k + u_k A} B^{n-k} \\
        ={}& \sum_{k=0}^n \binom{n}{k} \bra{v_k A + u_k A^2 + v_k B + u_k (AB+ \alpha + \epsilon A - A^2)} B^{n-k} \\
        ={}& \sum_{k=0}^n \binom{n}{k} (u_k AB +  v_k B + (v_k + \epsilon u_k)A + \alpha u_k) B^{n-k},
    \end{align*}
    where we split the sum and apply $(*)$ to obtain
    \begin{align*}
        (A+B)^{n+1} 
        &= \sum_{k=0}^n \binom{n}{k} (u_k A +  v_k) B^{n+1-k} + \sum_{k=0}^n \binom{n}{k} (u_{k+1}A + v_{k+1} B) B^{n-k} \\
        &= \sum_{k=0}^{n+1} \bra{\binom{n}{k} + \binom{n}{k-1}} \bra{v_k + u_k A} B^{n+1-k},
    \end{align*}
    and finally, utilizing Pascal's identity, we conclude the induction.
    
    For the proof of (ii $\imp$ i), first we compute
    \[
    (A+B)^2 = (v_0 + u_0 A) B^2 + 2 (v_1 + u_1A)B + v_2 + u_2A = B^2 + 2 AB + \alpha + \epsilon A,
    \]
    then we make use of \cref{e:comid} to obtain $[B, A] = \alpha + \epsilon A -  A^2$, as expected.
    
    For (iii $\imp$ ii), we notice that by the explicit formula for $(u_n)_{n\in\N}$ and $(v_n)_{n\in\N}$, we precisely have
    \[
    \Phi_A(t, r, \rho) =  \sum_{n=0}^\infty \frac{t^n}{n!} (v_n + u_n A) ,
    \]
    which is a Cauchy product with $e^{Bt}$ and identification away from the desired result. Similarly, we derive (ii $\imp$ iii) by summing the binomial theorem with the factor $t^n/n!$.
\end{proof}

\subsection{The general result and its various limiting cases}\label{s:second}

\Cref{t:1ea} was a mandatory step in proving the result in full generality, since we were able to connect the commutator equation i with the EI through the BT. This would have been considerably more challenging to achieve in the general case, given the absence of straightforward formulations for the BT.

\begin{theorem}\label{t:lea}
    Let $A$ and $B$ be two elements of $\A$ and $\alpha$, $\epsilon$ and $\lambda \neq 0$ three complex numbers such that $\alpha \neq 0$ or $\epsilon \neq 0$.
    Let $r, \rho$ be two roots of the polynomial $\alpha + \epsilon X - \lambda X^2$.
    Then these statements are equivalent:
    \begin{enumerate}[label=\textnormal{\roman*.}]
        \item \makebox[\linewidth]{$[B, A] = \alpha + \epsilon A - \lambda A^2$}
        \item for $t\in\K$, \[
        e^{(A+B)t} = \Phi_A(\lambda t, r, \rho)^{1/\lambda} e^{B t}.
        \]
    \end{enumerate}
\end{theorem}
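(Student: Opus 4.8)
The plan is to bootstrap from the case $\lambda = 1$ already settled in \Cref{t:1ea}, by scaling $A$. For the implication (i $\imp$ ii) I would note that $[B, \lambda A] = \lambda[B,A] = \lambda\alpha + \epsilon(\lambda A) - (\lambda A)^2$, so the pair $(\lambda A, B)$ satisfies the hypothesis of \Cref{t:1ea} with parameters $\lambda\alpha$ and $\epsilon$; by the root--coefficient relations ($r+\rho = \epsilon/\lambda$, $r\rho = -\alpha/\lambda$) the roots of $\lambda\alpha + \epsilon X - X^2$ are $\lambda r$ and $\lambda\rho$. Hence \Cref{t:1ea} gives $e^{(\lambda A + B)t} = \Phi_{\lambda A}(t, \lambda r, \lambda\rho)\,e^{Bt}$, i.e.\ $\Psi_{\lambda A}(t) = \Phi_{\lambda A}(t, \lambda r, \lambda\rho)$ in the notation of \Cref{l:exist} (which applies since $[B, \lambda A]$ is independent of $B$). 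Now I would invoke \Cref{p:l A} for the pair $(\lambda A, B)$ with the scalar $1/\lambda$, which gives $e^{(A+B)t} = \Psi_{\lambda A}(t)^{1/\lambda}\,e^{Bt} = \Phi_{\lambda A}(t, \lambda r, \lambda\rho)^{1/\lambda}\,e^{Bt}$; it then remains only to observe the one-line substitution identity $\Phi_{\lambda A}(t, \lambda r, \lambda\rho) = \Phi_A(\lambda t, r, \rho)$, valid in both branches of \cref{e:Phi}, to conclude.

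For the converse (ii $\imp$ i) I would argue by a second-order Taylor expansion at $t = 0$, exactly as in the converse half of \Cref{l:exist}. Put $\Psi(t) \defeq \Phi_A(\lambda t, r, \rho)^{1/\lambda}$. Substituting $t=0$ in \cref{e:Phi} gives $\Phi_A(0, r, \rho) = 1$, while the $t$-derivative of $\Phi_A(\lambda t, r, \rho)$ at $0$ equals $\lambda A$; since the coefficients of the power series $\Phi_A(\lambda t, r, \rho)$ are all polynomials in $A$ and hence mutually commute, the usual chain rule for the scalar power $1/\lambda$ applies and produces $\Psi(0) = 1$, $\Psi'(0) = A$. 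Then, repeating the computation in \Cref{l:exist} — differentiate $e^{(A+B)t} = \Psi(t)e^{Bt}$ twice at $t=0$ and use \cref{e:comid} — one obtains $[B, A] = \Psi''(0) - A^2$. Finally $\Psi''(0)$ is computed by the product and chain rules: writing $g(t) \defeq \Phi_A(\lambda t, r, \rho)$ one has $g(0) = 1$, $g'(0) = \lambda A$ and $g''(0) = \lambda^2({-}r\rho + (r+\rho)A) = \lambda\alpha + \lambda\epsilon A$ (again by the root--coefficient relations), so that $\Psi''(0) = \frac1\lambda\big(\frac1\lambda-1\big)g'(0)^2 + \frac1\lambda g''(0) = (1-\lambda)A^2 + \alpha + \epsilon A$, and therefore $[B,A] = \alpha + \epsilon A - \lambda A^2$.

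The conceptual content lies entirely in \Cref{t:1ea} and \Cref{p:l A}; everything else is bookkeeping, and I expect the only points requiring genuine care to be the well-posedness checks. Namely: $\Phi_A(\lambda t, r, \rho)^{1/\lambda}$ is a legitimate formal power series in $t$ because $\Phi_A(0, r, \rho) = 1$ makes the base of the exponentiation have constant term $1$, so the defining series converges $t$-adically, and $(G^\lambda)^{1/\lambda} = G$ by \Cref{p:expo} since $\lambda$ and $1/\lambda$ are scalars; the chain rule used for the $1/\lambda$-th power is licit precisely because all coefficients in sight are polynomials in $A$; and the degenerate double-root case $r = \rho$ is handled throughout by the second branch of \cref{e:Phi} (or, alternatively, by continuity in $(r,\rho)$ of both sides of the asserted equivalence).
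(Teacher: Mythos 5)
Your proposal is correct, and the forward direction (i $\Rightarrow$ ii) is essentially the paper's own argument: scale $A$ to $\lambda A$ so that \Cref{t:1ea} applies with roots $\lambda r,\lambda\rho$, combine with \Cref{p:l A}, and use the rescaling identity $\Phi_{\lambda A}(t,\lambda r,\lambda\rho)=\Phi_A(\lambda t,r,\rho)$ (the paper phrases the last step as identifying $\Psi_A(t)^\lambda=\Phi_A(\lambda t,r,\rho)$ and raising to the power $1/\lambda$ via \Cref{p:expo}, while you apply \Cref{p:l A} to the pair $(\lambda A,B)$ with scalar $1/\lambda$ — the same computation in a slightly different order). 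Where you genuinely diverge is the converse: the paper treats ii $\Rightarrow$ i only implicitly, leaning on the fact that its chain of identifications runs through the equivalence in \Cref{t:1ea}, which is slightly delicate because invoking \Cref{p:l A} already presupposes that $[B,A]$ does not depend on $B$; your direct second-order Taylor argument in the spirit of the converse half of \Cref{l:exist} removes this circularity concern, and your computation checks out: $g'(0)=\lambda A$, $g''(0)=\lambda^2(-r\rho+(r+\rho)A)=\lambda\alpha+\lambda\epsilon A$ by the root--coefficient relations for $\alpha+\epsilon X-\lambda X^2$, hence $\Psi''(0)=(1-\lambda)A^2+\alpha+\epsilon A$ and $[B,A]=\Psi''(0)-A^2=\alpha+\epsilon A-\lambda A^2$ via \cref{e:comid}. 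The commutativity justification for the chain rule (all coefficients are polynomials in $A$) and the $t$-adic well-definedness of $\Phi_A(\lambda t,r,\rho)^{1/\lambda}$ (constant term $1$) are exactly the right points to flag. In short: same core idea as the paper for the main implication, with a more explicit and self-contained treatment of the reverse implication than the paper provides.
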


\begin{remark}
    It is worth mentioning that deriving a straightforward BT is possible, as, when $r\neq\rho$, we have the equality
    \begin{equation}
       \Phi_A(\lambda t, r, \rho)^{1/\lambda} = e^{rt} \bra{1+(A-r)\frac{e^{\lambda(\rho - r)t}-1}{\rho - r}}^{1/\lambda},
    \end{equation}
    which we can expand with \cref{e:expo,e:stir2gf}. However, after identification, the BT would involve radicals and, therefore, would not encompass the simple fact that integer coefficients in the commutator should lead to integer coefficients in the BT.
\end{remark}

\begin{proof}
    According to \Cref{p:l A}, we are aware of the existence of $\Psi_A$ such that
    \begin{equation}\label{e:psilambda}
        e^{(\lambda A + B) t} = \Psi_A(t)^\lambda e^{B t}.
    \end{equation}
    Moreover, equivalently to i, we find, through linearity, that $[B, \lambda A] = \lambda \alpha + \epsilon (\lambda A) - (\lambda A)^2$, thus according to \Cref{t:1ea}
    \[
    e^{(\lambda A + B) t} = \Phi_{\lambda A}(t, \lambda r, \lambda\rho) e^{B t},
    \]
    because $\lambda r, \lambda \rho$ are roots of the polynomial $\lambda \alpha + \epsilon X - X^2$ . We can identify the above expression with \cref{e:psilambda} and simplify it via the definition of $\Phi$ given in (\ref{e:Phi}), to obtain
    \[
    \Psi_A(t)^\lambda = \Phi_A(\lambda t, r, \rho),
    \]
    and by raising this equation to the power $1/\lambda$ (with \Cref{p:expo}), we arrive at the desired result.
\end{proof}

The proof is highly specific to the degree 2 case, and it does not appear that it can be generalized to the case of the commutator being a polynomial of higher degree. This limitation likely arises because the degree 2 cases are more natural and straightforward, while higher-degree cases may introduce additional complexities that are not easily addressed within the framework of the proof above.

\begin{corollary}\label{c:l0a}
    Let $A$ and $B$ be two elements of $\A$ and $\alpha \neq 0$ and $\lambda \neq 0$ two complex numbers.
    Then these statements are equivalent:
    \begin{enumerate}[label=\textnormal{\roman*.}]
        \item \makebox[\linewidth]{$[B, A] = \alpha - \lambda A^2$}
        \item for $t\in\K$, \[
        e^{(A+B)t} = \bra{\cosh(\sqrt{\lambda \alpha}t) + A \sqrt{\frac{\lambda}{\alpha}} \sinh(\sqrt{\lambda\alpha}t)}^{1/\lambda} e^{B t}.
        \]
    \end{enumerate}
\end{corollary}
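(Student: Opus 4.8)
The plan is to obtain this corollary as the direct specialization $\epsilon = 0$ of \Cref{t:lea}. That theorem already supplies the equivalence $\mathrm{i}\Leftrightarrow\mathrm{ii}$ for the commutator $[B,A]=\alpha+\epsilon A-\lambda A^2$ under the hypothesis $\alpha\neq 0$ or $\epsilon\neq 0$, and since here $\alpha\neq 0$ the hypothesis is met; so the whole task reduces to rewriting $\Phi_A(\lambda t,r,\rho)^{1/\lambda}$ in the stated closed form when $\epsilon=0$.

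First I would identify the roots: with $\epsilon=0$ the polynomial $\alpha+\epsilon X-\lambda X^2$ is $\alpha-\lambda X^2$, whose roots are $r=\sqrt{\alpha/\lambda}$ and $\rho=-\sqrt{\alpha/\lambda}$. Because $\alpha\neq 0$ and $\lambda\neq 0$ we have $r\neq\rho$, so the first branch of \cref{e:Phi} is the relevant one, and moreover $\rho=-r$ and $\rho-r=-2r$.

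Next I would substitute into $\Phi_A(\lambda t,r,\rho)$ and simplify. Since $\lambda r=\sqrt{\lambda\alpha}$ and $\lambda\rho=-\sqrt{\lambda\alpha}$, the term free of $A$ becomes $\frac{\rho e^{r\lambda t}-r e^{\rho\lambda t}}{\rho-r}=\frac{-r\left(e^{\sqrt{\lambda\alpha}\,t}+e^{-\sqrt{\lambda\alpha}\,t}\right)}{-2r}=\cosh(\sqrt{\lambda\alpha}\,t)$, while the coefficient of $A$ becomes $\frac{e^{\rho\lambda t}-e^{r\lambda t}}{\rho-r}=\frac{\sinh(\sqrt{\lambda\alpha}\,t)}{r}=\sqrt{\lambda/\alpha}\,\sinh(\sqrt{\lambda\alpha}\,t)$. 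Hence $\Phi_A(\lambda t,r,\rho)=\cosh(\sqrt{\lambda\alpha}\,t)+A\sqrt{\lambda/\alpha}\,\sinh(\sqrt{\lambda\alpha}\,t)$, and raising to the power $1/\lambda$ — licit by \Cref{p:expo}, as this element commutes with the scalar $1/\lambda$ — yields exactly statement ii.

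There is no genuine obstacle in this argument; the only points that call for a little care are verifying $r\neq\rho$ (so that the $x=y$ branch of $\Phi$ never intervenes) and noting that the final expression is independent of the branch chosen for the square roots, because $\cosh$ is even and $x\mapsto\sinh(x)/x$ is even, so both $\cosh(\sqrt{\lambda\alpha}\,t)$ and $\sqrt{\lambda/\alpha}\,\sinh(\sqrt{\lambda\alpha}\,t)$ are unambiguously defined.
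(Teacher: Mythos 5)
Your proposal is correct and follows essentially the same route as the paper: the paper likewise obtains the corollary by specializing \Cref{t:lea} to $\epsilon=0$, noting $r,\rho=\pm\sqrt{\alpha/\lambda}$, and then simplifying $\Phi_A(\lambda t,r,\rho)$ into the $\cosh$/$\sinh$ form (a step the paper calls ``straightforward'' and you simply carry out explicitly, together with a harmless remark on branch independence).
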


Using the identity $(\cosh x + A \sinh x)^p = (1 - (\tanh x)^2)^{-p/2} (1 + A \tanh(x))^p$, one can derive a BT for this specific instance however, they would include coefficients of powers of the hyperbolic tangent that are not easily expressible.

\begin{proof}
    Looking at \Cref{t:lea}, with $\epsilon = 0$ we have $r, \rho = \pm \sqrt{\alpha/\lambda}$. The proof is then straightforward.
\end{proof}

\begin{corollary}\label{c:le0}
    Let $A$ and $B$ be two elements of $\A$ and $\lambda \neq 0$ and $\epsilon \neq 0$ two complex numbers.
    Then these statements are equivalent:
    \begin{enumerate}[label=\textnormal{\roman*.}]
        \item  \makebox[\linewidth]{$[B, A] = \epsilon A - \lambda A^2$}
        \item for $n\in\N$, \begin{align*}
            (A+B)^n &= \sum_{k=0}^n \binom{n}{k} \bra{\sum_{j=0}^k \Stir{k}{j} \epsilon^{k-j} [\lambda]_j A^j} B^{n-k} \\
        &= \sum_{k=0}^n \epsilon^{n-k} \sum_{j=0}^k  \binom{n}{j} \Stir{n-j}{k-j} [\lambda]_{k-j} A^{k-j} B^j
        \end{align*}
        \item for $t\in\K$, \[
        e^{(A+B)t} = \bra{1+\lambda A \frac{e^{\epsilon t}-1}{\epsilon}}^{1/\lambda} e^{B t}.
        \]
    \end{enumerate}
\end{corollary}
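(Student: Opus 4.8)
The plan is to treat \Cref{c:le0} as the $\alpha=0$ specialization of \Cref{t:lea} and then unfold the exponentiation to extract the binomial theorem. First I would observe that the polynomial $\epsilon X-\lambda X^2=X(\epsilon-\lambda X)$ has the two roots $r=0$ and $\rho=\epsilon/\lambda$, which are distinct because $\epsilon\neq0$ and $\lambda\neq0$; in particular we always sit in the $x\neq y$ branch of $\Phi_A$. Substituting $x=0$, $y=\epsilon/\lambda$ and the argument $\lambda t$ into \cref{e:Phi} gives, after the $e^{0}=1$ cancellation, $\Phi_A(\lambda t,0,\epsilon/\lambda)=1+\lambda A\,(e^{\epsilon t}-1)/\epsilon$. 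Hence \Cref{t:lea} asserts precisely that i is equivalent to $e^{(A+B)t}=\bra{1+\lambda A (e^{\epsilon t}-1)/\epsilon}^{1/\lambda}e^{Bt}$, i.e.\ to iii.

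Next, to bring in the binomial theorem ii, I would expand the right-hand side of iii as a formal power series in $t$. Applying \cref{e:expo} with base $1+\lambda A(e^{\epsilon t}-1)/\epsilon$ and exponent $1/\lambda$, and using that $A$ commutes with every scalar and with $t$, yields $\sum_{k\geq0}\lambda^k\binom{1/\lambda}{k}\epsilon^{-k}(e^{\epsilon t}-1)^k A^k$. Then I would use the identity $\lambda^k k!\binom{1/\lambda}{k}=\lambda^k(1/\lambda)_k=[\lambda]_k$ (valid for $\lambda\neq0$, the $i=0$ factor contributing $1$) together with the generating function \cref{e:stir2gf}, which gives $(e^{\epsilon t}-1)^k=k!\sum_{n\geq k}\Stir{n}{k}\epsilon^n t^n/n!$. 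Exchanging the order of summation and collecting the coefficient of $t^n/n!$ produces
\[
\Phi_A(\lambda t,0,\epsilon/\lambda)^{1/\lambda}=\sum_{n=0}^\infty\frac{t^n}{n!}\sum_{k=0}^n\Stir{n}{k}\epsilon^{n-k}[\lambda]_k A^k .
\]
Cauchy-multiplying by $e^{Bt}=\sum_m t^m B^m/m!$ and identifying the coefficient of $t^n/n!$ then gives the first displayed form of ii; the second form is the same double sum after the reindexing that records the powers of $A$ and $B$ directly (take the $A$-exponent to be $k-j$ and the $B$-exponent to be $j$), so nothing new is needed there. This chain is reversible, so in fact ii $\iff$ iii.

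Finally I would close the equivalence loop. Since i $\iff$ iii and iii $\implies$ ii are in hand, it remains to check ii $\implies$ i, for which I would evaluate ii at $n=2$: using $\Stir{1}{1}=\Stir{2}{1}=\Stir{2}{2}=1$, $\Stir{1}{0}=\Stir{2}{0}=0$, $[\lambda]_0=[\lambda]_1=1$ and $[\lambda]_2=1-\lambda$, one gets $(A+B)^2=B^2+2AB+\epsilon A+(1-\lambda)A^2$, and \cref{e:comid} then returns $[B,A]=\epsilon A-\lambda A^2$. The whole argument is essentially mechanical once \Cref{t:lea} is granted; the only points requiring care are the bookkeeping of the $\epsilon$-powers through \cref{e:stir2gf}, the identification $[\lambda]_k=\lambda^k(1/\lambda)_k$, and checking that the degenerate root $r=0$ still lands in the regular branch of $\Phi_A$ (it does, since $\epsilon\neq0$) — none of which is a genuine obstacle, so I do not anticipate a hard step.
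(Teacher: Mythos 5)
Your proposal is correct and follows essentially the same route as the paper: specialize \Cref{t:lea} at $\alpha=0$ (roots $0$ and $\epsilon/\lambda$, so $\Phi_A(\lambda t,0,\epsilon/\lambda)=1+\lambda A\frac{e^{\epsilon t}-1}{\epsilon}$) to get i $\Leftrightarrow$ iii, then expand the exponentiation with \cref{e:stir2gf}, identify $\lambda^k(1/\lambda)_k=[\lambda]_k$, Cauchy-multiply by $e^{Bt}$, and reindex the triangular array for the second form of ii. Your explicit $n=2$ check with \cref{e:comid} to close ii $\Rightarrow$ i is a small addition the paper leaves to the reversibility of the identification, but it is not a different method.
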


\begin{proof}
    Considering \Cref{t:lea}, with $\alpha = 0$, we find $r=0$ and $\rho = \epsilon/\lambda$. The proof of (i $\eq$ iii) is straightforward. For ii, we utilize the EI. First, we express the following in its power series form and then use the Stirling generating function (\ref{e:stir2gf})
    \begin{align*}
        &\bra{1+\lambda A \frac{e^{\epsilon t}-1}{\epsilon}}^{1/\lambda}
        = \sum_{j=0}^\infty \binom{1/\lambda}{j} \lambda^j A^j \bra{\frac{e^{\epsilon t}-1}{\epsilon}}^j \\
        ={}& \sum_{j=0}^\infty (1/\lambda)_j \lambda^j A^j \inv{\epsilon^j }\sum_{n=j}^\infty \Stir{n}{j} \frac{(\epsilon t)^n}{n!} = \sum_{n=0}^\infty \frac{t^n}{n!} \sum_{j=0}^n \Stir{n}{j} \epsilon^{n-j} [\lambda]_j A^j.
    \end{align*}
    By the Cauchy product and identification
    \[
    (A+B)^n = \sum_{k=0}^n \binom{n}{k} \bra{\sum_{j=0}^k \Stir{k}{j} \epsilon^{k-j} [\lambda]_j A^j} B^{n-k}.
    \]
    The second formula is obtained by summing the triangular array of terms in the orthogonal direction:
    \begin{equation}\label{e:sumarray}
        \sum_{0\leq j\leq k\leq n} c_{j,k} = \sum_{0\leq j\leq k\leq n} c_{k-j,n-j}.
    \end{equation}
    
\end{proof}

\begin{corollary}\label{c:0ea}
    Let $A$ and $B$ be two elements of $\A$ and $\epsilon \neq 0$ and $\alpha$ two complex numbers.
    Then these statements are equivalent:
    \begin{enumerate}[label=\textnormal{\roman*.}]
        \item \makebox[\linewidth]{$[B, A] = \alpha + \epsilon A$}
        \item for $t\in\K$, \[
        e^{(A+B)t} = e^{\tfrac{\alpha}{\epsilon}\bra{\tfrac{e^{\epsilon t}-1}{\epsilon} - t}} e^{A \tfrac{e^{\epsilon t}-1}{\epsilon}} e^{B t}
        \]
    \end{enumerate}
\end{corollary}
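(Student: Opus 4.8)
The plan is to prove the equivalence i $\eq$ ii as two separate implications, arguing directly rather than by specialising \Cref{t:lea} (whose hypothesis $\lambda \neq 0$ excludes the present case); the argument parallels the one for \Cref{c:le0}, with the extra constant $\alpha$ absorbed into a scalar factor.

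For i $\imp$ ii, I would introduce the formal power series $G(t) \defeq e^{(A+B)t} e^{-Bt}$, which satisfies $G(0) = 1$; since $B$ commutes with $e^{-Bt}$, differentiating yields the formal differential equation
\[
G'(t) = (A+B)\,G(t) - G(t)\,B = A\,G(t) + [B, G(t)].
\]
The key step is to check that the ansatz $\tilde G(t) \defeq e^{p(t)} e^{q(t) A}$, with $p, q$ scalar power series and $p(0) = q(0) = 0$, solves the same equation. Since $[[B,A],A] = [\alpha + \epsilon A, A] = 0$, \Cref{l:chain} gives $[B, e^{q(t)A}] = [B,A]\, q(t)\, e^{q(t)A} = (\alpha + \epsilon A)\, q(t)\, e^{q(t)A}$, so $\tilde G$ satisfies the equation exactly when $q' = 1 + \epsilon q$ and $p' = \alpha q$; these scalar equations with the given initial data force $q(t) = (e^{\epsilon t} - 1)/\epsilon$ and $p(t) = \tfrac{\alpha}{\epsilon}\bra{(e^{\epsilon t}-1)/\epsilon - t}$. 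A one-line induction shows the recursion $(n+1) H_{n+1} = A H_n + [B, H_n]$ with $H_0 = 1$ has a unique solution in $\A[[t]]$, so $G = \tilde G$ and hence $e^{(A+B)t} = e^{p(t)} e^{q(t) A} e^{Bt}$, which is ii once we write $e^{q(t)A} = e^{A(e^{\epsilon t}-1)/\epsilon}$.

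For ii $\imp$ i, I would read ii as $e^{(A+B)t} = \Psi_A(t) e^{Bt}$ with $\Psi_A(t) = e^{p(t)} e^{q(t) A}$ and the same $p, q$, and differentiate twice at $t = 0$. From $\Psi_A'(t) = (p'(t) + q'(t)A)\Psi_A(t)$ one gets $\Psi_A(0) = 1$, $\Psi_A'(0) = A$ and $\Psi_A''(0) = (p''(0) + q''(0)A) + (p'(0) + q'(0)A)^2 = \alpha + \epsilon A + A^2$, so the second derivative of ii at $0$ reads $(A+B)^2 = \alpha + \epsilon A + A^2 + 2AB + B^2$; \cref{e:comid} then gives $[B, A] = \alpha + \epsilon A$.

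The only real obstacle is guessing the closed forms of $p$ and $q$, and this is immediate by analogy with \Cref{c:le0} (there $\Psi_A$ was an exponentiation of $A$; here an ordinary exponential suffices because $\lambda = 0$). As a sanity check, ii can also be recovered as the $\lambda \to 0$ limit of \Cref{t:lea}: one root of $\alpha + \epsilon X - \lambda X^2$ tends to $-\alpha/\epsilon$ while the other tends to $\infty$ with $\lambda(\rho - r) \to \epsilon$, whence $\Phi_A(\lambda t, r, \rho)^{1/\lambda} \to e^{-\alpha t/\epsilon} e^{(A + \alpha/\epsilon)(e^{\epsilon t}-1)/\epsilon}$, and this product rearranges to the stated right-hand side — though turning that limit into a proof inside the formal framework is messier than the direct route above.
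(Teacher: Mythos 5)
Your proposal is correct, but it takes a genuinely different route from the paper. The paper obtains ii from i by taking the limit $\lambda \to 0$ in \Cref{t:lea}: it chooses $|\lambda|$ small so the discriminant is nonzero, expands the roots as $r_\lambda = -\alpha/\epsilon + o(1)$ and $\rho_\lambda = \epsilon/\lambda + o(\lambda^{-1})$, and computes the limit of $\log\bigl(\Phi_A(\lambda t, r_\lambda, \rho_\lambda)^{1/\lambda}\bigr)$ — exactly the sanity-check limit you mention at the end and rightly flag as delicate to formalize. You instead argue directly: set $G(t) = e^{(A+B)t}e^{-Bt}$, derive $G' = AG + [B,G]$, verify the ansatz $e^{p(t)}e^{q(t)A}$ via \Cref{l:chain} (using $[[B,A],A]=0$), solve the scalar equations $q' = 1+\epsilon q$, $p' = \alpha q$, and conclude by uniqueness of the coefficient recursion; this is self-contained, avoids the asymptotic bookkeeping, and is in the same spirit as the Cauchy-problem method behind Viskov's theorem (\Cref{c:V1}) later in the paper, whereas the paper's route is shorter given that \Cref{t:lea} is already in hand. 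You also prove ii $\imp$ i explicitly by the second-derivative-at-zero argument with \cref{e:comid}, which the paper leaves implicit (it is the same mechanism as in \Cref{l:exist} and \Cref{t:1ea}). Two small remarks: in the ansatz step, only the sufficiency direction is needed (your ``exactly when'' would require $1$ and $A$ to be independent, but nothing is lost since you only use that your chosen $p, q$ make $\tilde G$ a solution); and the uniqueness recursion should be stated as $(n+1)H_{n+1} = AH_n + [B,H_n]$ with $H_0 = 1$, exactly as you have it, so the induction is immediate.
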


\begin{proof}
    Beginning with \Cref{t:lea} and aiming to take the limit $\lambda \to 0$, we select $|\lambda| > 0$ sufficiently small such that the discriminant $\Delta_\lambda^2 \defeq \epsilon^2 + 4 \lambda\alpha$ is nonzero. The distinct roots can then be expressed as
    \[
    r_\lambda = \frac{\epsilon-\Delta}{2\lambda} = -\frac{\alpha}{\epsilon} + o(1), \quad \rho_\lambda = \frac{\epsilon+\Delta}{2\lambda} = \frac{\epsilon}{\lambda} + o(\lambda^{-1}),
    \]
    because $\Delta_\lambda = \epsilon + 2 \lambda\alpha/\epsilon + o(\lambda)$. Utilizing $\rho_\lambda - r_\lambda = \Delta_\lambda/\lambda$, we then obtain
    \begin{align*}
        \log(\Phi_A(\lambda t, r_\lambda, \rho_\lambda)^{1/\lambda})
        &= \inv\lambda (\Phi_A(\lambda t, r_\lambda, \rho_\lambda) - 1) + o(1) \\
        &= \inv{\Delta_\lambda} (\rho_\lambda e^{r_\lambda \lambda t} - r_\lambda e^{\rho_\lambda \lambda t} + A (e^{\rho_\lambda \lambda t} - e^{r_\lambda \lambda t}) - (\rho_\lambda - r_\lambda)) + o(1) \\
        &= \inv{\Delta_\lambda} (\rho_\lambda (e^{r_\lambda \lambda t} - 1) - r_\lambda (e^{\rho_\lambda \lambda t} - 1) + A (e^{\rho_\lambda \lambda t} - e^{r_\lambda \lambda t})) + o(1) \\
        &\xrightarrow[\lambda\to0]{} \inv\epsilon \bra{-\alpha t + \frac{\alpha}{\epsilon}(e^{\epsilon t} - 1) + A (e^{\epsilon t} - 1)},
    \end{align*}
    which upon applying the exponential yields the EI of the corollary.
\end{proof}

\begin{example}\label{x:zass}
    A notable implication of the theorem is a specific instance of the BCH. By using \Cref{p:l A}, we can make the computation
    \[
    e^{\bra{\tfrac{\epsilon t}{e^{\epsilon t}-1}A + B}t} = \bra{e^{\tfrac{\alpha}{\epsilon}\bra{\tfrac{e^{\epsilon t}-1}{\epsilon} - t}} e^{A \tfrac{e^{\epsilon t}-1}{\epsilon}}}^{\tfrac{\epsilon t}{e^{\epsilon t}-1}} e^{B t} = e^{\tfrac{\alpha t}{\epsilon}\bra{1 - \tfrac{\epsilon t}{e^{\epsilon t}-1}}} e^{A t} e^{B t},
    \]
    which can also be expressed as
    \begin{equation}
        e^{At} e^{Bt} = e^{\bra{A+B + [B, A] \bra{\tfrac{t}{e^{\epsilon t}-1}-\tinv{\epsilon}}}t}.
    \end{equation}
    This formula is a special case of the bivariate $[B, A] = \alpha + \epsilon A + \sigma B$, for which a closed form of the BCH was established in \cite{BCHclosed}.
\end{example}

In the rest of this section, the results we present are already known. For example, in this instance, known as the meromorphic Weyl algebra, the EI is commonly referred to as the ``\emph{Berry identity}'' \cite[Appendix (a)]{Berry}. Viskov \cite{Viskov98} first expanded it into the BT, but it was also notably independently found by Benaoum \cite{Benaoum98}, along with its $q$-analogue \cite{Benaoum99}.

\begin{corollary}[The Berry identity]\label{c:l00}
    Let $A$ and $B$ be two elements of $\A$ and $\lambda \neq 0$ a complex number.
    Then these statements are equivalent:
    \begin{enumerate}[label=\textnormal{\roman*.}]
        \item \makebox[\linewidth]{$[B, A] = -\lambda A^2$}
        \item for $n\in\N$, \[
        (A+B)^n = \sum_{k=0}^n  \binom{n}{k} [\lambda]_k A^k B^{n-k}
        \]
        \item for $t\in\K$, \[
        e^{(A+B)t} = \bra{1+\lambda At }^{1/\lambda} e^{B t}.
        \]
    \end{enumerate}
\end{corollary}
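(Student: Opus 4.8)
The cleanest route is to recognize Corollary~\ref{c:l00} as the special case $\alpha = \epsilon = 0$ of the already-established Corollary~\ref{c:le0}, or alternatively as a degenerate limit of Theorem~\ref{t:lea}; I would use Corollary~\ref{c:le0} since it directly supplies both the BT and the EI. The plan is to take the limit $\epsilon \to 0$ in the three statements of Corollary~\ref{c:le0} and check that each piece converges to the corresponding piece here. For statement i this is immediate: $\epsilon A - \lambda A^2 \to -\lambda A^2$. For the EI iii, the factor $(e^{\epsilon t} - 1)/\epsilon$ tends to $t$ as $\epsilon \to 0$, so $(1 + \lambda A (e^{\epsilon t}-1)/\epsilon)^{1/\lambda} e^{Bt} \to (1 + \lambda A t)^{1/\lambda} e^{Bt}$, which is exactly iii here. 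For the BT ii, the coefficient $\sum_{j=0}^k \Stir{k}{j}\epsilon^{k-j}[\lambda]_j A^j$ has all terms with $j < k$ carrying a positive power of $\epsilon$, so in the limit only the $j = k$ term survives, namely $\Stir{k}{k}[\lambda]_k A^k = [\lambda]_k A^k$, yielding $(A+B)^n = \sum_k \binom{n}{k}[\lambda]_k A^k B^{n-k}$.

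Because a limiting argument on formal statements is slightly delicate to phrase rigorously, I would prefer to state the three implications directly and prove (i $\Rightarrow$ ii) by induction, then (ii $\Rightarrow$ i) and (iii $\Leftrightarrow$ ii) as in the proofs of Theorems~\ref{t:1ea} and \ref{t:lea}. For (i $\Rightarrow$ ii): the base case $n=0$ is trivial. Assuming the formula for $n$, multiply on the left by $A+B$; the key simplification is $(A+B)A^k = A^{k+1} + AB A^{k-1} \cdot(\cdots)$ — more carefully, one uses Lemma~\ref{l:chain} with $[B,A] = -\lambda A^2$ (noting $[[B,A],A] = -\lambda[A^2,A] = 0$) to get $[B, A^k] = -\lambda k A^{k+1}$, hence $B A^k = A^k B - \lambda k A^{k+1}$. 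Then
\[
(A+B) [\lambda]_k A^k B^{n-k} = [\lambda]_k A^{k+1} B^{n-k} + [\lambda]_k A^k B^{n+1-k} - \lambda k [\lambda]_k A^{k+1} B^{n-k},
\]
and since $[\lambda]_k (1 - \lambda k) = [\lambda]_{k+1}$ by the definition $[x]_n = \prod_{i=1}^{n-1}(1-ix)$, the $A^{k+1}$ terms combine to $[\lambda]_{k+1} A^{k+1} B^{n-k}$. Re-indexing and applying Pascal's identity $\binom{n}{k} + \binom{n}{k-1} = \binom{n+1}{k}$ closes the induction. For (ii $\Rightarrow$ i), compute $(A+B)^2 = [\lambda]_0 B^2 + 2[\lambda]_1 AB + [\lambda]_2 A^2 = B^2 + 2AB + (1-\lambda)A^2$ and invoke \cref{e:comid} to read off $[B,A] = -\lambda A^2$. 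The equivalence (ii $\Leftrightarrow$ iii) follows by multiplying the BT by $t^n/n!$, summing, recognizing the Cauchy product with $e^{Bt}$, and using the binomial series $\sum_k \binom{1/\lambda}{k}(\lambda t)^k A^k = (1 + \lambda A t)^{1/\lambda}$ together with $\binom{1/\lambda}{k}\lambda^k = [\lambda]_k/k!$ (equivalently $(1/\lambda)_k \lambda^k = [\lambda]_k$).

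The main obstacle is purely bookkeeping: getting the identity $[\lambda]_k(1-\lambda k) = [\lambda]_{k+1}$ to align the inductive step, and correctly tracking the shift of summation index when combining the two sums into one indexed by $k$ running $0$ to $n+1$. There is no conceptual difficulty here beyond what already appears in the proof of Theorem~\ref{t:1ea}; indeed, this corollary is strictly easier, as the recurrence for the coefficients collapses to the single-term relation $[\lambda]_{k+1} = (1-\lambda k)[\lambda]_k$ rather than a two-term linear recurrence.
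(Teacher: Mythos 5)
Your first route is exactly the paper's proof: the paper establishes this corollary in one line by letting $\epsilon \to 0$ in \Cref{c:le0}, using $\lim_{\epsilon\to0}(e^{\epsilon t}-1)/\epsilon = t$, and your observation that in the BT coefficient $\sum_{j}\Stir{k}{j}\epsilon^{k-j}[\lambda]_j A^j$ only the $j=k$ term survives is the right way to see that statement ii also degenerates correctly. Your second, self-contained argument is a sound alternative that sidesteps the limiting step: the induction mirrors the proof of \Cref{t:1ea}, and its key ingredients check out --- \Cref{l:chain} applies since $[[B,A],A]=-\lambda[A^2,A]=0$, giving $BA^k = A^kB - \lambda k A^{k+1}$; the coefficient recursion $[\lambda]_{k+1}=(1-k\lambda)[\lambda]_k$ is immediate from $[\lambda]_k=\prod_{i=1}^{k-1}(1-i\lambda)$; the direction (ii $\Rightarrow$ i) via $(A+B)^2 = B^2+2AB+(1-\lambda)A^2$ and \cref{e:comid} is correct; and the equivalence (ii $\Leftrightarrow$ iii) rests on $(1/\lambda)_k\lambda^k=[\lambda]_k$, which is precisely the identity $[x]_n = x^n(1/x)_n$ recorded in the paper. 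The limit route buys brevity by reusing \Cref{c:le0} (itself a corollary of \Cref{t:lea}), while your direct induction buys independence from that chain of results and from any formal-limit justification; both are valid.
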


\begin{proof}
    Using \Cref{c:le0} with $\epsilon \to 0$ and the limit
    \[
    \lim_{\epsilon \to 0} \frac{e^{\epsilon t}-1}{\epsilon} = t.
    \]
\end{proof}

The EI in the shift algebra is known as \emph{Sack's identity} \cite{Sack} and Viskov \cite{Viskov95} later expressed the BT by expanding Sack's identity.

\begin{corollary}[Sack's identity]\label{c:0e0}
    Let $A$ and $B$ be two elements of $\A$ and $\epsilon \neq 0$ a complex number.
    Then these statements are equivalent:
    \begin{enumerate}[label=\textnormal{\roman*.}]
        \item \makebox[\linewidth]{$[B, A] = \epsilon A$}
        \item for $n\in\N$, \begin{align*}
            (A+B)^n &= \sum_{k=0}^n \binom{n}{k} \bra{\sum_{j=0}^k \Stir{k}{j} \epsilon^{k-j} A^j} B^{n-k} \\
        &= \sum_{k=0}^n \epsilon^{n-k} \sum_{j=0}^k  \binom{n}{j} \Stir{n-j}{k-j} A^{k-j} B^j
        \end{align*}
        \item for $t\in\K$, \[
        e^{(A+B)t} = e^{A \tfrac{e^{\epsilon t}-1}{\epsilon}} e^{B t}.
        \]
    \end{enumerate}
\end{corollary}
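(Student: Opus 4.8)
The plan is to obtain the corollary as the $\alpha=0$ instance of \Cref{c:0ea}, and then to upgrade the resulting exponential identity to the binomial theorem by a direct generating-function computation. First I would observe that \Cref{c:0ea} already permits $\alpha=0$ (it only requires $\epsilon\neq0$), and that setting $\alpha=0$ collapses the scalar prefactor $e^{\tfrac{\alpha}{\epsilon}(\tfrac{e^{\epsilon t}-1}{\epsilon}-t)}$ to $1$. Hence the equivalence
\[
[B,A]=\epsilon A\quad\Longleftrightarrow\quad e^{(A+B)t}=e^{A\tfrac{e^{\epsilon t}-1}{\epsilon}}e^{Bt}
\]
is immediate, with no limiting argument needed; this is (i $\eq$ iii) of the corollary.

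It then remains to prove (iii $\eq$ ii). Since both sides are power series in $t$ and every step below is reversible, it suffices to deduce the first form of ii from iii. I would expand the left factor of the exponential identity using the exponential series together with the Stirling generating function \cref{e:stir2gf},
\[
e^{A\tfrac{e^{\epsilon t}-1}{\epsilon}}=\sum_{j\ge0}\frac{A^j}{j!\,\epsilon^j}(e^{\epsilon t}-1)^j=\sum_{n\ge0}\frac{t^n}{n!}\sum_{j=0}^n\Stir{n}{j}\epsilon^{n-j}A^j,
\]
then form the Cauchy product with $e^{Bt}=\sum_{m\ge0}t^mB^m/m!$ and match the coefficient of $t^n/n!$ with $(A+B)^n$ coming from $e^{(A+B)t}$. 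This produces
\[
(A+B)^n=\sum_{k=0}^n\binom{n}{k}\bra{\sum_{j=0}^k\Stir{k}{j}\epsilon^{k-j}A^j}B^{n-k},
\]
the first line of ii. For the second line I would re-sum the triangular array in the orthogonal direction as in \cref{e:sumarray}: the bijection $(j,k)\mapsto(k-j,n-j)$ of $\{0\le j\le k\le n\}$ carries $\binom{n}{k}\Stir{k}{j}\epsilon^{k-j}A^jB^{n-k}$ to $\binom{n}{j}\Stir{n-j}{k-j}\epsilon^{n-k}A^{k-j}B^j$, which is exactly the stated alternative. Running the same Cauchy product in reverse recovers iii from ii and closes the cycle of equivalences.

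I do not anticipate a real obstacle: the corollary is a genuine specialization of an already-proved result, and the binomial expansion is the computation in the proof of \Cref{c:le0} with $[\lambda]_j$ replaced by its value $1$ at $\lambda=0$. The only thing requiring care is the index bookkeeping — checking that the substitution behind \cref{e:sumarray} is a bijection and transforms the summand as claimed, and reading off the right binomial coefficients from the Cauchy product. As a self-contained alternative one could instead prove (i $\imp$ ii) by induction on $n$ in the style of \Cref{t:1ea}: multiply by $A+B$ on the left, use \Cref{l:chain} in the form $[B,A^j]=j\epsilon A^j$, and invoke the Stirling recurrence $\Stir{k+1}{j}=j\Stir{k}{j}+\Stir{k}{j-1}$; one then closes the equivalences by reading the $n=2$ case of ii against \cref{e:comid} and by summing the identity against $t^n/n!$ — but the route through \Cref{c:0ea} is shorter.
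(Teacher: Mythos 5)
Your proposal is correct and takes essentially the same route as the paper: the paper likewise obtains iii by setting $\alpha=0$ in \Cref{c:0ea} and obtains ii by specializing the binomial formulas of \Cref{c:le0} to $\lambda=0$ (where $[\lambda]_j=1$). The only difference is that you inline the Stirling-generating-function expansion instead of citing \Cref{c:le0}, which is if anything slightly more careful, since \Cref{c:le0} is formally stated only for $\lambda\neq0$.
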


\begin{proof}
    Using \Cref{c:le0} for ii with $\lambda = 0$ and \Cref{c:0ea} for iii with $\alpha = 0$.
\end{proof}

The subsequent result is arguably one of the most renowned, as it encompasses the ``\emph{Glauber formula}'', which is a well-known and useful result in quantum mechanics, as the underlying mathematical structure is often the Weyl algebra. Although the binomial theorem is a direct consequence of the Glauber formula, it seems that it was first formulated by Yamazaki \cite{Yamazaki}.

\begin{corollary}[The Glauber formula]\label{c:00a}
    Let $A$ and $B$ be two elements of $\A$ and $\alpha$ a complex number.
    Then these statements are equivalent:
    \begin{enumerate}[label=\textnormal{\roman*.}]
        \item \makebox[\linewidth]{$[B, A] = \alpha$}
        \item for $n\in\N$, \[
        (A+B)^n = \sum_{i=0}^{\floor{n/2}} \frac{n! (\alpha/2)^i}{i!(n-2i)!} \sum_{k=0}^{n-2i} \binom{n-2i}{k} A^k B^{n-2i-k}
        \]
        \item for $t\in\K$, \[
        e^{(A+B)t} = e^{\alpha \tfrac{t^2}{2}} e^{A t} e^{B t}.
        \]
    \end{enumerate}
\end{corollary}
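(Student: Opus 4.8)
The plan is to derive everything from the already-established general machinery by specializing $\alpha, \epsilon, \lambda$ appropriately. The commutator $[B,A] = \alpha$ fits \cref{e:comm} with $\epsilon = \lambda = 0$, so this is the doubly-degenerate limit of \Cref{c:0ea} (take $\epsilon \to 0$) or equivalently of \Cref{c:l00} (take $\lambda \to 0$). For the equivalence (i $\Leftrightarrow$ iii), I would start from \Cref{c:0ea}, whose EI reads $e^{(A+B)t} = e^{\frac{\alpha}{\epsilon}(\frac{e^{\epsilon t}-1}{\epsilon}-t)} e^{A\frac{e^{\epsilon t}-1}{\epsilon}} e^{Bt}$, and send $\epsilon \to 0$. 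The middle factor's exponent tends to $A t$ since $\frac{e^{\epsilon t}-1}{\epsilon}\to t$; the first factor's exponent is $\frac{\alpha}{\epsilon}(\frac{e^{\epsilon t}-1}{\epsilon}-t)$, and a second-order Taylor expansion $\frac{e^{\epsilon t}-1}{\epsilon} = t + \frac{\epsilon t^2}{2} + o(\epsilon)$ gives $\frac{\alpha}{\epsilon}\cdot\frac{\epsilon t^2}{2} + o(1) \to \frac{\alpha t^2}{2}$. This yields $e^{(A+B)t} = e^{\alpha t^2/2} e^{At} e^{Bt}$, which is iii. Since all of these limiting passages are reversible in the sense that \Cref{c:0ea} already gives an equivalence for every $\epsilon \neq 0$, the cleaner route is actually: iii $\Rightarrow$ i by the same second-derivative-at-$0$ computation used in \Cref{l:exist} (differentiate $e^{(A+B)t} = e^{\alpha t^2/2}e^{At}e^{Bt}$ twice at $t=0$, read off $(A+B)^2 = \alpha + A^2 + 2AB + B^2$, and invoke \cref{e:comid}); and i $\Rightarrow$ iii directly, e.g. by noting $[[B,A],A]=0$ so the BCHD terminates after the bracket $\frac12[At, Bt] = \frac{\alpha t^2}{2}$, giving $\log(e^{At}e^{Bt}) = At + Bt + \frac{\alpha t^2}{2}$, hence $e^{At}e^{Bt} = e^{(A+B)t + \alpha t^2/2} = e^{-\alpha t^2/2}e^{(A+B)t}$ (the last step valid because $\alpha t^2/2$ is central).

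For the binomial theorem (ii), I would obtain it by extracting the coefficient of $t^n/n!$ from the EI in iii, using the Cauchy product. Write $e^{\alpha t^2/2} = \sum_{i\geq 0} \frac{(\alpha/2)^i}{i!} t^{2i}$, $e^{At} = \sum_{p} \frac{A^p}{p!}t^p$, $e^{Bt} = \sum_q \frac{B^q}{q!}t^q$. The coefficient of $t^n$ in the triple product is $\sum_{2i+p+q=n} \frac{(\alpha/2)^i}{i!}\frac{A^p B^q}{p!\,q!}$; multiplying by $n!$ and reindexing with $k = p$, $n-2i = p+q$ gives exactly $\sum_{i=0}^{\lfloor n/2\rfloor}\frac{n!(\alpha/2)^i}{i!(n-2i)!}\sum_{k=0}^{n-2i}\binom{n-2i}{k}A^k B^{n-2i-k}$, which is the stated formula. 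Conversely, ii $\Rightarrow$ iii follows by multiplying the binomial identity by $t^n/n!$, summing over $n$, and recognizing the three resulting geometric-type series; or one can simply note ii $\Rightarrow$ i by checking the $n=2$ case (which gives $(A+B)^2 = \alpha + A^2 + 2AB + B^2$) and closing the loop i $\Rightarrow$ iii $\Rightarrow$ ii already established. So the logical skeleton I would present is the cycle i $\Rightarrow$ iii $\Rightarrow$ ii $\Rightarrow$ i, with i $\Rightarrow$ iii from BCHD (or from \Cref{c:0ea} via $\epsilon\to 0$), iii $\Rightarrow$ ii by Cauchy product, and ii $\Rightarrow$ i by the $n=2$ specialization plus \cref{e:comid}.

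The only genuine subtlety — the ``hard part'', though it is mild here — is justifying the centrality manipulations and the limit. Everything hinges on the observation that $\alpha$ (identified with $\alpha\cdot\mathbf{1}_\A$) is central, so that $e^{\alpha t^2/2}$ commutes with everything and factors of the form $e^{X + \text{central}}$ split as $e^X e^{\text{central}}$; this is what makes the BCHD computation collapse and what makes the coefficient extraction legitimate. If instead I route through \Cref{c:0ea} and let $\epsilon\to 0$, I should remark that this is a limit of formal power series in $t$ with coefficients depending continuously (indeed analytically) on $\epsilon$, so termwise passage to the limit is valid — but since \Cref{c:0ea} (and \Cref{c:l00}) are already full equivalences, the shortest honest proof just says: ``apply \Cref{c:0ea} with $\epsilon \to 0$ for iii, and \Cref{c:l00} with $\lambda \to 0$ for ii,'' and then notes that the $\epsilon\to 0$ and $\lambda \to 0$ limits of those formulas manifestly produce the displayed expressions. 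I would write it that way for brevity, adding the one-line $n=2$ check only if the referee wants the cycle closed independently.
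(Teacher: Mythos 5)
Your main route coincides with the paper's: the paper proves iii by taking $\epsilon \to 0$ in \Cref{c:0ea}, using exactly the two limits you quote, and obtains ii by the same Cauchy-product expansion of $e^{\alpha t^2/2} e^{At} e^{Bt}$ followed by identification of the coefficient of $t^n/n!$ with $(A+B)^n$. Your BCHD argument for i $\Rightarrow$ iii and the $n=2$ check with \cref{e:comid} for ii $\Rightarrow$ i are valid supplements (the paper leaves that closure implicit, resting on the equivalence already contained in \Cref{c:0ea}).

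One genuine error to fix before you write it up: your proposed ``shortest honest proof'' of ii invokes \Cref{c:l00} with $\lambda \to 0$, but \Cref{c:l00} is the Jordan-plane case $[B,A]=-\lambda A^2$ with $\alpha=\epsilon=0$, so its $\lambda\to0$ limit is the commutative binomial $\sum_k\binom{n}{k}A^kB^{n-k}$ and $\alpha$ never enters; it cannot produce the Glauber binomial. The corollary whose $\lambda\to0$ limit lands on $[B,A]=\alpha$ is \Cref{c:l0a} (commutator $\alpha-\lambda A^2$), and even there only the EI is stated, so you would still have to expand it by the Cauchy product exactly as you already do from iii --- keep that derivation and drop the \Cref{c:l00} shortcut. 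A minor sign slip as well: with the paper's convention $[B,A]=\alpha$ one has $[A,B]=-\alpha$, so $\tfrac12[At,Bt]=-\alpha t^2/2$ and $\log(e^{At}e^{Bt})=(A+B)t-\alpha t^2/2$; your final identity $e^{At}e^{Bt}=e^{-\alpha t^2/2}e^{(A+B)t}$ is correct, but the intermediate lines preceding it contradict it.
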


\begin{proof}
    Using \Cref{c:0ea} for iii with $\epsilon \to 0$ and the limits
    \[
    \lim_{\epsilon \to 0} \frac{e^{\epsilon t}-1}{\epsilon} = t, \qquad \lim_{\epsilon \to 0} \inv{\epsilon} \bra{\frac{e^{\epsilon t}-1}{\epsilon}-t} = \frac{t^2}{2}.
    \]
    The BT follows from expanding the EI. We apply the Cauchy product to the following power series
    \[
    e^{\alpha \tfrac{t^2}{2}} = \sum_{p=0}^\infty t^p \frac{(\alpha/2)^{p/2}}{(p/2)!} [p \text{ is even}], \qquad e^{A t} e^{B t} = \sum_{n=0}^\infty \frac{t^n}{n!} \sum_{k=0}^n \binom{n}{k} A^k B^{n-k},
    \]
    (where Iverson's brackets $[p \text{ is even}]$ take the value 1 if $p$ is even, and 0 if $p$ is odd) to obtain
    \begin{align*}
        e^{\alpha \tfrac{t^2}{2}} e^{A t} e^{B t}
        &= \sum_{n=0}^\infty t^n \sum_{j=0}^n \frac{(\alpha/2)^{j/2}}{(j/2)!(n-j)!} [j \text{ is even}] \sum_{k=0}^{n-j} \binom{n-j}{k} A^k B^{n-j-k} \\
        &= \sum_{n=0}^\infty \frac{t^n}{n!} \sum_{i=0}^{\floor{n/2}} \frac{n! (\alpha/2)^i}{i!(n-2i)!} \sum_{k=0}^{n-2i} \binom{n-2i}{k} A^k B^{n-2i-k}.
    \end{align*}
    Hence by identification with $e^{(A+B)t}$, we prove ii.
\end{proof}

\section{The case of a bivariate commutator}\label{s:bivariate}

The existence of an EI and a BT is guaranteed when the commutator is univariate, as established in \Cref{l:exist} and \Cref{p:similar} i.  However, this guarantee is lost in the bivariate case, as noted in \cite{Rosengren}. Nonetheless, as hinted at by \Cref{p:similar} iii, the BCH formula and its less renowned counterpart, the \emph{Zassenhaus formula}\footnote{which, in short, provides an expression for $e^{-A}e^{-B}e^{A+B}$}, appear to be more promising paths for investigating bivariate commutators. In particular, closed forms for both the BCH \cite{BCHclosed,BCHLie} and the Zassenhaus formula \cite{Zassclosed} have been derived in the case $[A, B] = \alpha + \epsilon A + \sigma B$.

Additional formulas for the BCH in bivariate settings can be derived using \Cref{p:similar} iii and the results from \Cref{s:quadra}. One such formula is presented in the following theorem.

\begin{theorem}\label{t:A+B}
    Let $A$ and $B$ be two elements of $\A$ and $\epsilon$ and $\lambda \neq 0$ two complex numbers.
    Then these statements are equivalent:
    \begin{enumerate}[label=\textnormal{\roman*.}]
        \item \makebox[\linewidth]{$[B, A] = \epsilon (A+B) + \lambda (A+B)^2$}
        \item for $n\in\N$, \[
        [\lambda]_n (A+B)^n = \sum_{k=0}^n \coeff{n}{k}  \epsilon^{n-k} \sum_{j=0}^k \binom{k}{j} A^j B^{k-j} \\
        \]
        \item for $t\in\K$, \[
        e^{At} e^{B t} = \bra{1+\lambda (A+B) \frac{1-e^{-\epsilon t}}{\epsilon}}^{1/\lambda}.
        \]
    \end{enumerate}
\end{theorem}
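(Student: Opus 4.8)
The plan is to reduce Theorem~\ref{t:A+B} to Corollary~\ref{c:le0} via Proposition~\ref{p:similar}~iii. The crucial observation is that the commutator condition i, namely $[B,A] = \epsilon(A+B) + \lambda(A+B)^2$, is exactly of the form $[B,A] = -f(A+B)$ with $f(X) = -\epsilon X - \lambda X^2$. Setting $S = A+B$, we have $[B,A] = [-B, A+B] = [-B, S]$, so the hypothesis reads $[-B, S] = -\epsilon S - \lambda S^2 = \epsilon S - \lambda S^2$ after absorbing the sign into $-B$; more precisely, writing $B' = -B$ we get $[B', S] = \epsilon S - \lambda S^2$, which is precisely condition i of Corollary~\ref{c:le0} with the pair $(S, B')$ in place of $(A,B)$. (One must double-check the sign bookkeeping here — this is the one genuinely delicate point — but it follows the template of the proof of Proposition~\ref{p:similar}: $[B,A]=-f(A+B)$ is the same as $[-B, A+B] = f(A+B)$, and here $f(X) = -\epsilon X - \lambda X^2$, so the algebra generated by $S$ and $-B$ satisfies the Corollary's hypothesis with parameters $\epsilon \mapsto -\epsilon$, $\lambda \mapsto -\lambda$, or equivalently after a further sign flip the stated parameters. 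I would carry out this substitution carefully and record exactly which signs appear.)

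Granting the reduction, I would then invoke the equivalences of Corollary~\ref{c:le0} for $(S, B')$. Its part iii gives
\[
e^{(S + B')t} = \Bigl(1 + \lambda S \tfrac{e^{\epsilon t}-1}{\epsilon}\Bigr)^{1/\lambda} e^{B' t}
\]
(up to the sign corrections from the previous paragraph). Now $S + B' = (A+B) + (-B) = A$, so the left side is $e^{At}$; and $e^{B't} = e^{-Bt}$. Multiplying on the right by $e^{Bt}$ and being mindful that $S$ and $B'$ do \emph{not} commute (so one cannot just move $e^{B't}$ around freely — but here it is already isolated on the right, which is exactly why the antinormal/normal form matters) yields
\[
e^{At} e^{Bt} = \Bigl(1 + \lambda (A+B)\tfrac{1 - e^{-\epsilon t}}{\epsilon}\Bigr)^{1/\lambda},
\]
after replacing $t \mapsto -t$ or $\epsilon \mapsto -\epsilon$ as dictated by the sign analysis; the factor $\tfrac{1-e^{-\epsilon t}}{\epsilon}$ in statement iii (versus $\tfrac{e^{\epsilon t}-1}{\epsilon}$ in Corollary~\ref{c:le0}) is precisely the fingerprint of that sign flip. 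This establishes i~$\Leftrightarrow$~iii. For the converse direction, the same chain of equivalences in Corollary~\ref{c:le0} runs backwards, so no extra work is needed.

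For the binomial theorem ii, I would again transplant part ii of Corollary~\ref{c:le0} for the pair $(S, B')$: it expresses $(S + B')^n = A^n$ as a normal-ordered sum in $S$ and $B'$. Substituting $S = A+B$, $B' = -B$ and expanding $B'^{\,m} = (-1)^m B^m$ gives a formula for $A^n$ in powers of $(A+B)$ and $B$; rearranging to solve for $[\lambda]_n (A+B)^n$ — which is legitimate because the leading coefficient in the Corollary's expansion is exactly $[\lambda]_k$ when $k=n$ — and then re-expanding each $(A+B)^k = \sum_j \binom{k}{j}\,\underline{A^j B^{k-j}}$... but wait: statement ii writes $\sum_j \binom{k}{j} A^j B^{k-j}$, not the symmetrized $\underline{A^j B^{k-j}}$, so in fact the cleaner route is to go directly from iii. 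Expand $\bigl(1 + \lambda(A+B)\tfrac{1-e^{-\epsilon t}}{\epsilon}\bigr)^{1/\lambda}$ using \cref{e:expo} and the Stirling generating function \cref{e:stir2gf} (or rather \cref{e:stir1gf}, since $1 - e^{-\epsilon t}$ relates to $-\log$ of... no — it is $(e^x-1)$-type after the substitution $x = -\epsilon t$), collect the coefficient of $t^n/n!$, which produces the Stirling numbers of the \emph{first} kind $\coeff{n}{k}$ appearing in ii, and on the left-hand side expand $e^{At}e^{Bt} = \sum_n \tfrac{t^n}{n!}\sum_k \binom{n}{k} A^k B^{n-k}$ via the Cauchy product. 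Matching coefficients of $t^n$ and simplifying yields ii. I expect the main obstacle to be precisely this last coefficient-extraction: tracking whether the $(1-e^{-\epsilon t})$ factor contributes Stirling numbers of the first or second kind and getting the powers of $\epsilon$ and the factor $[\lambda]_n$ to land correctly — the appearance of $\coeff{n}{k}$ (first kind) in the statement, as opposed to $\Stir{n}{k}$ (second kind) in Corollary~\ref{c:le0}, is a strong hint that the sign reversal $\epsilon \mapsto -\epsilon$ is doing real work and must be handled with care.
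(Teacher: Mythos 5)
Your handling of i\,$\Leftrightarrow$\,iii is essentially the paper's own route: the paper simply applies \Cref{p:similar}~iii to the EI of \Cref{c:le0} with $\epsilon$ replaced by $-\epsilon$. Do fix the sign bookkeeping you flagged: since $[B,A]=[B,A+B]$, setting $S=A+B$, $B'=-B$ gives $[B',S]=-[B,A]=-\epsilon S-\lambda S^2$, so \Cref{c:le0} is invoked for the pair $(S,B')$ with $\epsilon\mapsto-\epsilon$ and $\lambda$ \emph{unchanged} (your alternative ``$\lambda\mapsto-\lambda$'' is wrong: flipping $\lambda$ would also flip the exponent $1/\lambda$ and the prefactor of $S$, and would not reproduce iii). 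With that correction, $S+B'=A$ and you indeed get $e^{At}e^{Bt}=\bigl(1+\lambda(A+B)\tfrac{1-e^{-\epsilon t}}{\epsilon}\bigr)^{1/\lambda}$, exactly as in the paper.

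The genuine gap is in your derivation of ii. Matching coefficients of $t^n$ directly in iii does \emph{not} yield ii: expanding the right-hand side with \cref{e:expo} gives $\sum_k\tfrac{[\lambda]_k}{k!}(A+B)^k\bigl(\tfrac{1-e^{-\epsilon t}}{\epsilon}\bigr)^k$, and since $1-e^{-\epsilon t}$ is of $(e^x-1)$ type, \cref{e:stir2gf} produces Stirling numbers of the \emph{second} kind; identification then gives
\[
\sum_{j=0}^n \binom{n}{j} A^j B^{n-j} \;=\; \sum_{k=0}^n \Stir{n}{k} (-\epsilon)^{n-k}\, [\lambda]_k\, (A+B)^k,
\]
which is the \emph{inverse} of statement ii (powers of $A+B$ on the right, the ordered monomials on the left), not ii itself. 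To reach ii you still need to invert this triangular relation, either by Stirling orthogonality (apply $\sum_n s(m,n)(-\epsilon)^{m-n}$, using $s(m,n)(-1)^{m-n}=\coeff{m}{n}$), or, as the paper does, by first substituting $t\mapsto\tfrac{-\log(1-\epsilon t)}{\epsilon}$ in iii so that the right-hand side becomes $(1+\lambda(A+B)t)^{1/\lambda}=\sum_n\tfrac{t^n}{n!}[\lambda]_n(A+B)^n$ while the left-hand side $e^{A\tfrac{-\log(1-\epsilon t)}{\epsilon}}e^{B\tfrac{-\log(1-\epsilon t)}{\epsilon}}$, expanded by the Cauchy product and \cref{e:stir1gf}, delivers precisely the coefficients $\coeff{n}{k}\epsilon^{n-k}\sum_j\binom{k}{j}A^jB^{k-j}$. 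Your own hesitation about ``first or second kind'' points at exactly this missing step; as written, the claim that direct coefficient extraction ``produces the Stirling numbers of the first kind'' is incorrect, and the reparametrization (or inversion) is the idea that makes ii come out.
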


\begin{proof}
    Our goal is to translate the equivalent statements of \Cref{c:le0} into this context. From i, iii follows with the special case of the BCH given by \Cref{p:similar}, applied to the EI of \Cref{c:le0}, where $\epsilon$ should be replaced with $-\epsilon$. For ii, we first rewrite iii as
    \[
    e^{A \tfrac{-\log(1-\epsilon t)}{\epsilon}} e^{B \tfrac{-\log(1-\epsilon t)}{\epsilon}} = (1+\lambda (A+B)t)^{1/\lambda}.
    \]
    On the left side, we use \cref{e:stir1gf} after applying the Cauchy product
    
    \begin{align*}
        &e^{A \tfrac{-\log(1-\epsilon t)}{\epsilon}} e^{B \tfrac{-\log(1-\epsilon t)}{\epsilon}} = \sum_{k=0}^\infty \frac{(-\log(1-\epsilon t))^k}{\epsilon^k k!} \sum_{j=0}^k \binom{k}{j} A^j B^{k-j} \\
        ={}& \sum_{k=0}^\infty \sum_{n=k}^\infty \coeff{n}{k} \frac{t^n}{n!} \epsilon^{n-k} \sum_{j=0}^k \binom{k}{j} A^j B^{k-j} = \sum_{n=0}^\infty \frac{t^n}{n!} \sum_{k=0}^n \coeff{n}{k}  \epsilon^{n-k} \sum_{j=0}^k \binom{k}{j} A^j B^{k-j},
    \end{align*}
    and the right-hand side becomes, after a calculation done earlier:
    \[
    (1+\lambda (A+B)t)^{1/\lambda} = \sum_{n=0}^\infty \frac{t^n}{n!} (A+B)^n [\lambda]_n,
    \]
    which means that by identification, we finally obtain
    \[
    [\lambda]_n (A+B)^n = \sum_{k=0}^n \coeff{n}{k}  \epsilon^{n-k} \sum_{j=0}^k \binom{k}{j} A^j B^{k-j}.
    \]
    The steps can be made backward, effectively showing the equivalence between ii and iii.
\end{proof}

Here, we observe an interesting phenomenon that may appear contradictory: When $[B, A]$ contains degree-two terms in both $A$ and $B$, one may encounter a binomial theorem with rational coefficients (after dividing by $[\lambda]_n$ in ii), even when all parameters are integers.

\begin{example}
    To gain a better understanding of this occurrence, we aim to illustrate a special case proposed in \cite{Rosengren}, namely, the case $[B, A] = \lambda A^2 + \mu B^2$. The reader may wish to attempt on their own to normal order the term $B^2 A$, and might stop when arriving at
    \[
    B^2 A = 2 \lambda^2 A^3 + 2\lambda A^2 B + (1+\lambda\mu)A B^2 + 2 \mu B^3 + \lambda \mu B^2 A,
    \]
    because of the ``recursive'' aspect in $B^2 A$ of the formula. Thus, as noticed in \cite{Rosengren}, the normal ordered form of $B^2 A$ exists if and only if $\lambda \mu \neq 1$, and it is the following:
    \[
    B^2 A = \frac{2 \lambda^2}{1-\lambda\mu} A^3 + \frac{2\lambda}{1-\lambda\mu} A^2 B + \frac{1+\lambda\mu}{1-\lambda\mu}A B^2 + \frac{2 \mu}{1-\lambda\mu} B^3.
    \]
\end{example}

Note that the analogue of ``$\lambda\mu \neq 1$'' in the case of \Cref{t:A+B} is ``$\lambda$ is not a root of $[X]_n$''. Therefore, in the limiting case $\lambda\to0$, the binomial always exists and is given below.

\begin{corollary}\label{c:A+B}
    Let $A$ and $B$ be two elements of $\A$ and $\epsilon \neq 0$ a complex number.
    Then these statements are equivalent:
    \begin{enumerate}[label=\textnormal{\roman*.}]
        \item \makebox[\linewidth]{$[B, A] = \epsilon (A+B)$}
        \item for $n\in\N$, \[
        (A+B)^n = \sum_{k=0}^n \coeff{n}{k}  \epsilon^{n-k} \sum_{j=0}^k \binom{k}{j} A^j B^{k-j} \\
        \]
        \item for $t\in\K$, \[
        e^{At} e^{B t} = e^{(A+B) \tfrac{1-e^{-\epsilon t}}{\epsilon}}.
        \]
    \end{enumerate}
\end{corollary}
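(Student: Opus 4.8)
The plan is to obtain Corollary~\ref{c:A+B} as the $\lambda\to 0$ limiting case of Theorem~\ref{t:A+B}, in exactly the same spirit as Corollary~\ref{c:l00} was deduced from Corollary~\ref{c:le0}. First I would observe that the limit $\lim_{\lambda\to 0}[\lambda]_n = \lim_{\lambda\to 0}\prod_{i=1}^{n-1}(1-i\lambda) = 1$, so that dividing statement ii of Theorem~\ref{t:A+B} by $[\lambda]_n$ and sending $\lambda\to 0$ yields exactly statement ii of the corollary. Similarly, on the exponential side, $\bigl(1+\lambda(A+B)\tfrac{1-e^{-\epsilon t}}{\epsilon}\bigr)^{1/\lambda}$ is a formal power series in $t$ whose coefficients are polynomials in $\lambda$ (via the generalized binomial coefficients $\binom{1/\lambda}{k}\lambda^k = (1/\lambda)_k\lambda^k = [\lambda]_k$), and each such coefficient tends, as $\lambda\to 0$, to the corresponding coefficient of $e^{(A+B)\tfrac{1-e^{-\epsilon t}}{\epsilon}}$; this is precisely the computation $\lim_{\lambda\to 0}[\lambda]_k = 1$ applied termwise, matching the passage from iii of Corollary~\ref{c:le0} to iii of Corollary~\ref{c:l00}. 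Hence statements ii and iii of the corollary follow from the corresponding statements of Theorem~\ref{t:A+B} in the limit.

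It remains to handle statement i. The cleanest route is not to take a limit at all but simply to note that ``$[B,A]=\epsilon(A+B)$'' is the instance $\lambda=0$ of ``$[B,A]=\epsilon(A+B)+\lambda(A+B)^2$'', and then re-derive the equivalence directly, since Theorem~\ref{t:A+B} formally required $\lambda\neq 0$. Concretely, I would invoke Corollary~\ref{c:0e0} (Sack's identity): replacing $A$ by $A+B$ and $B$ by $-B$ there, the hypothesis becomes $[-B,A+B]=\epsilon(A+B)$, i.e. $[B,A]=[B, A+B]=-\epsilon(A+B)$ up to the sign convention --- more carefully, one applies Proposition~\ref{p:similar} iv (the special case of the BCH) with $f(X)=\epsilon X$ to the EI of Corollary~\ref{c:0e0}, with $\epsilon$ replaced by $-\epsilon$, exactly as in the proof of Theorem~\ref{t:A+B}. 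This gives i $\Leftrightarrow$ iii directly for $\lambda=0$ without any limiting argument, and then ii $\Leftrightarrow$ iii follows by the Cauchy product and coefficient identification, reading off $t^n/n!$ (using \cref{e:stir1gf} to expand $e^{A\tfrac{-\log(1-\epsilon t)}{\epsilon}}e^{B\tfrac{-\log(1-\epsilon t)}{\epsilon}}$ after substituting $t\mapsto \tfrac{1-e^{-\epsilon t}}{\epsilon}$, equivalently rewriting iii as $e^{A\tfrac{-\log(1-\epsilon t)}{\epsilon}}e^{B\tfrac{-\log(1-\epsilon t)}{\epsilon}} = e^{(A+B)t}$).

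The main obstacle is a bookkeeping one rather than a conceptual one: I must be careful that the substitution $t\mapsto \tfrac{1-e^{-\epsilon t}}{\epsilon}$ (equivalently $\tfrac{-\log(1-\epsilon t)}{\epsilon}\mapsto t$ in the other direction) is an invertible change of formal variable --- which it is, since it has the form $t+O(t^2)$ --- so that identification of $t^n$-coefficients is legitimate and the steps can be run backward to get the full equivalence i $\Leftrightarrow$ ii $\Leftrightarrow$ iii. A secondary point worth a sentence is why the $\lambda\to 0$ heuristic is sound: the coefficients on both sides of ii and iii of Theorem~\ref{t:A+B} are genuine polynomials (indeed, via $[\lambda]_n$, entire functions) in $\lambda$, so pointwise limits as $\lambda\to 0$ are unproblematic and commute with the finitely many operations involved. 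Given that the algebra has already been done twice in the paper (Corollaries~\ref{c:l00} and~\ref{c:0e0}), the proof of Corollary~\ref{c:A+B} should be short; in fact it essentially reduces to the one line ``use Theorem~\ref{t:A+B} with $\lambda\to 0$ and the limit $\lim_{\lambda\to 0}[\lambda]_n = 1$, together with Corollary~\ref{c:0e0} via Proposition~\ref{p:similar} iv for the equivalence with i.''
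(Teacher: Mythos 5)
Your proposal is correct and takes essentially the paper's route: the paper presents \Cref{c:A+B} simply as the limiting case $\lambda\to0$ of \Cref{t:A+B} (via $[\lambda]_n\to1$ and $(1+\lambda X)^{1/\lambda}\to e^{X}$ coefficientwise), with no separate proof given, and your direct handling of statement i through Sack's identity (\Cref{c:0e0}, with $\epsilon$ replaced by $-\epsilon$) combined with the ``special case of the BCH'' in \Cref{p:similar} is exactly the mechanism by which \Cref{t:A+B} itself was deduced from \Cref{c:le0}. Only a trivial citation slip: that BCH statement is item iii of \Cref{p:similar}, not iv.
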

In this scenario, $A$ and $B$ would serve as generators of the (scaled) \emph{excedance algebra} \cite{excedance}. Additionally, alongside \Cref{x:zass}, \Cref{c:A+B} represents another special case of the result presented in \cite{BCHclosed}. This case can be readily transformed into an EI or even a Zassenhaus formula, which would then constitute a special case of \cite{Zassclosed}.

\section{A theorem from Viskov}\label{s:Viskov}

In \cite[Theorem 8.53]{book}, a special case of a theorem from Viskov \cite[equation (4)]{Viskov97} is briefly mentioned. However, it appears that the broader applicability of the more general theorem was not recognized within the context of the theory being discussed. Therefore, we aim to present a more general version of this theorem by Viskov. This theorem is remarkably powerful, to the extent that it implies \Cref{l:exist} and the theorems outlined in \Cref{s:quadra}.

Regarding the history of this theorem, Viskov initially formulated a less general but multidimensional result in \cite{Viskov91}. Subsequently, in \cite{Viskov98}, Viskov extended the unidimensional case of his result and utilized a special instance of it to reaffirm the validity of Sack's identity \cite{Viskov95}. A further generalization was presented in \cite{Viskov97}\footnote{Although \cite{Viskov95,Viskov97} were published before \cite{Viskov98}, the manuscript of the latter was received prior to the former.}. Below, we present the version from \cite{Viskov98}, as the additional generality from \cite{Viskov97} is not pertinent.

It is worth noting that Viskov's proof in \cite{Viskov97}, which broadly follows the outline provided in \cite{Viskov98}, contains a critical error that we have rectified.

\begin{theorem}[Viskov's theorem]\label{t:Viskov}
    Let $p, f, g$ be formal power series and $A, B \in \A$ such that $[B, A] = p(A)$. Then
    \[
    \exp((f(A) B + g(A))t) = e^{\gamma(t)} (e^{\phi(t)})^B,
    \]
    with
    \[
    \phi(t) = \int_0^t f(\alpha(\tau)) \odif\tau, \qquad \gamma(t) = \int_0^t g(\alpha(\tau)) \odif\tau,
    \]
    where $\alpha$ is the solution to the Cauchy problem
    \[
    \odv{\alpha}{t} = p(\alpha) f(\alpha), \quad \alpha(0) = A.
    \]
\end{theorem}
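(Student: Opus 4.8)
The plan is to reduce everything to a first-order differential equation in $t$ and verify that the claimed right-hand side solves the same initial value problem. Write $U(t) \defeq \exp((f(A)B + g(A))t)$ and $V(t) \defeq e^{\gamma(t)} (e^{\phi(t)})^B$. Both satisfy $U(0) = V(0) = 1$, so by uniqueness of solutions to linear ODEs in the formal-series setting it suffices to show they obey the same evolution equation $\odv{W}{t} = (f(A)B + g(A)) W$. For $U$ this is immediate. The work is in differentiating $V$ and massaging the result into that form, and the key input will be \Cref{l:chain} together with \Cref{p:e^A^B} (rewriting $(e^{\phi(t)})^B$ as $\sum_n \phi(t)^n B^n / n!$, since $\phi(t)$ is a power series in $A$ and hence commutes with $A$ but not with $B$).

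First I would record the auxiliary facts about $\alpha$: since $\alpha(0) = A$ and $\odv{\alpha}{t} = p(\alpha)f(\alpha)$, and since $[B,\alpha(t)] = [B, \alpha(t)]$ can be computed via \Cref{l:chain} applied to the power series $\alpha(t)(\cdot)$ in $A$ — note $[[B,A],A] = [p(A),A] = 0$ by \Cref{l:chain} again, so the chain-rule hypothesis holds — one gets $[B, \alpha(t)] = [B,A]\,\partial_A\alpha(t) = p(A)\,\partial_A \alpha(t)$. Differentiating the defining ODE shows $\partial_A \alpha$ and $\odv{\alpha}{t}$ are related, and in fact the clean statement one wants is $[B, \alpha(t)] = \odv{\alpha}{t}$; I would verify this by checking both sides solve the same linear ODE in $t$ with the same value at $t=0$ (namely $[B,A] = p(A)$). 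Then $\odv{\phi}{t} = f(\alpha(t))$ and $\odv{\gamma}{t} = g(\alpha(t))$ by the fundamental theorem of calculus, and I would also need $[B, \phi(t)] = $ something manageable — again via \Cref{l:chain}, $[B,\phi(t)] = p(A)\phi'_A(t)$ where $\phi'_A$ denotes the $A$-derivative; relating this to $f(\alpha(t))$ is the crux.

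The main computation: differentiate $V(t) = e^{\gamma(t)}\sum_{n\ge 0}\phi(t)^n B^n/n!$ term by term. The $e^{\gamma(t)}$ factor contributes $g(\alpha(t)) V(t)$, which is the $g(A)$-part once we show $g(\alpha(t))$ can be pushed to act as $g(A)$ against the rest modulo commutator corrections — actually since $\gamma(t)$ is a power series in $A$, $e^{\gamma(t)}$ commutes with $\phi(t)$ but not $B^n$, so I must be careful and instead differentiate $V = e^{\gamma}(e^\phi)^B$ as a product, getting $\gamma'(t)V + e^{\gamma}\odv{}{t}(e^\phi)^B$. For the second term, $\odv{}{t}\sum \phi^n B^n/n! = \sum \frac{d}{dt}(\phi^n)B^n/n!$ and here $\frac{d}{dt}(\phi^n) = \sum_{i} \phi^i \phi' \phi^{n-1-i}$; since $\phi'(t) = f(\alpha(t))$ is a power series in $A$, it commutes with $\phi(t)$, so this collapses to $n\phi^{n-1}\phi'$. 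Thus $\odv{}{t}(e^\phi)^B = f(\alpha(t))\sum_{n\ge 1}\phi^{n-1}B^n/(n-1)!$, and I need to convert $\sum \phi^{n-1}B^n/(n-1)! = \big(\sum\phi^m B^m/m!\big)B + (\text{commutator terms from moving one }B\text{ past }\phi^{n-1})$. This is exactly where $[B,\phi(t)] = \odv{\phi}{t}\cdot(\text{correction})$ enters, and reconciling the $f(\alpha)B$-term with leftover $f(\alpha)\cdot[B,\phi]$-type terms — which should cancel against part of what $g$ or the chain rule produces — is the delicate bookkeeping step.

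I expect the main obstacle to be precisely this reordering: establishing the right commutation identity between $B$ and $\phi(t)$ (equivalently between $B$ and $\alpha(t)$) and checking that the unwanted terms generated by sliding a single $B$ to the left of $(e^{\phi(t)})^B$ assemble exactly into the discrepancy between $f(A)$ and $f(\alpha(t))$, so that after collecting everything $V$ satisfies $\odv{V}{t} = (f(A)B + g(A))V$. Once that ODE is in hand, uniqueness finishes the proof, and I would remark that the original error in Viskov's argument presumably lies in mishandling exactly this noncommutative reordering (treating $(e^{\phi})^B$ as if $B$ and $\phi$ commuted).
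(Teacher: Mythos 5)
Your overall strategy --- show that $V(t) = e^{\gamma(t)}(e^{\phi(t)})^B$ solves the same formal initial value problem $\odv{W}{t}=(f(A)B+g(A))W$, $W(0)=1$, as the left-hand side, and then invoke uniqueness of formal solutions --- is viable, and it is genuinely different from the paper's proof, which first treats $p=1$ in the concrete representation $A=\x$, $B=D$ by solving the transport PDE along characteristics and then reduces general $p$ to that case through a substitution $\pi$ with $\pi'=1/p$. But as written there is a genuine gap: the crux of your route is never carried out, and the one concrete intermediate identity you do assert, $[B,\alpha(t)]=\odv{\alpha}{t}$, is false. The check you yourself propose exposes it: at $t=0$ the left-hand side is $[B,A]=p(A)$, while $\odv{\alpha}{t}$ at $t=0$ equals $p(A)f(A)$, so the two agree only when $f\equiv 1$. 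The correct statement combines \Cref{l:chain} with the variational equation of the flow: $\pdv{\alpha}{A}$ and $\odv{\alpha}{t}$ both solve $\odv{y}{t}=(pf)'(\alpha)\,y$ with initial values $1$ and $p(A)f(A)$, hence $p(A)f(A)\pdv{\alpha}{A}=\odv{\alpha}{t}$ and therefore $f(A)\,[B,\alpha(t)]=f(A)p(A)\pdv{\alpha}{A}=\odv{\alpha}{t}$.

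Your sketch also misplaces where the noncommutative reordering happens. Differentiating $V=e^{\gamma}\sum_{n\ge0}\phi^n B^n/n!$ requires no commutator corrections at all, since $\phi^{n-1}B^n=\phi^{n-1}B^{n-1}\cdot B$; one gets directly $\odv{V}{t}=g(\alpha(t))\,V+f(\alpha(t))\,e^{\gamma}(e^{\phi})^B B$. The corrections arise on the other side, when the $B$ in $f(A)BV+g(A)V$ is pushed through $e^{\gamma}$ and $(e^{\phi})^B$ via \Cref{l:chain}:
\[
f(A)BV+g(A)V=\bra{g(A)+f(A)p(A)\pdv{\gamma}{A}}V+f(A)\bra{1+p(A)\pdv{\phi}{A}}e^{\gamma}(e^{\phi})^B B.
\]
So your argument reduces to the two identities $f(A)+f(A)p(A)\pdv{\phi}{A}=f(\alpha(t))$ and $g(A)+f(A)p(A)\pdv{\gamma}{A}=g(\alpha(t))$, which follow from the corrected variational identity by integrating in $\tau$: $p(A)f(A)\pdv{\phi}{A}=\int_0^t f'(\alpha)\odv{\alpha}{\tau}\odif\tau=f(\alpha(t))-f(A)$, and likewise $p(A)f(A)\pdv{\gamma}{A}=g(\alpha(t))-g(A)$. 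None of these statements appear in your write-up; the ``delicate bookkeeping step'' you defer is precisely the content of the theorem, so what you have is a plan rather than a proof, and its only explicit auxiliary claim needs the correction above. If you do complete it along these lines, you obtain an intrinsic algebraic proof that bypasses the paper's representation-and-substitution argument, which would be worth noting.
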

\begin{proof}
    First, we will demonstrate the special case $p(x) = 1$ and then recover the full theorem through specific substitutions. With $[B, A] = p(A) = 1$, we choose the representatives $B = D$ and $A = \x$, where $D$ is the derivative operator and $\x$ is the linear operator of multiplication by the variable $x$. For a polynomial $q$, we consider the expression
    \[
    u = u(t, x) \defeq \exp\bra{\bra{f(\x)D + g(\x)}t} q(x).
    \]
    $u$ is obviously the unique solution to the Cauchy problem
    \begin{equation}\label{e:pde}
        \pdv{u}{t} = f(x) \pdv{u}{x} + g(x) u, \quad u(0, x) = q(x).
    \end{equation}
    We will show that $v = v(t, x) \defeq e^{\gamma(t, x)} q(x+\phi(t, x))$ also satisfy \cref{e:pde}, which will yield, by unicity, equality with $u$. As in the theorem, $\phi$ and $\gamma$ are defined by
    \begin{equation}\label{e:phi}
        \phi(t, x) \defeq \int_0^t f(\alpha(\tau, x)) \odif\tau,
    \end{equation}
    and
    \begin{equation}\label{e:gamma}
        \gamma(t, x) \defeq \int_0^t g(\alpha(\tau, x)) \odif\tau,
    \end{equation}
    with $\alpha$ being the unique solution to this Cauchy problem
    \begin{equation}\label{e:alpha}
        \pdv{\alpha}{t} = f(\alpha), \quad \alpha(0, x) = x.
    \end{equation}
    Immediately with (\ref{e:phi}) and (\ref{e:alpha}), we have
    \begin{equation}\label{e:mu}
        \phi(t, x) = \int_0^t \pdv{\alpha}{t}(\tau, x) \odif\tau = \alpha(t, x) - x.
    \end{equation}
    This simplifies $v$ to $e^{\gamma(t, x)} q(\alpha(t, x))$. Now we will derive the partial differential equation satisfied by $\gamma$ and $\alpha$. \Cref{e:alpha} implies
    \begin{equation}\label{e:t}
        t = \int_0^t \pdv{\alpha}{t} \frac{\odif\tau}{f(\alpha)},
    \end{equation}
    and in \cref{e:t,e:gamma}, we make the substitution $z = \alpha$. The differential is given by $\odif z = \pdv{\alpha}{\tau} \odif\tau = f(\alpha) \odif\tau = f(z) \odif\tau$, which respectively yields
    \division{
    \[
    t = \int_x^{\alpha(t, x)} \frac{\odif z}{f(z)},
    \]
    }{
    \[
    \gamma(t, x) = \int_x^{\alpha(t, x)} \frac{g(z)}{f(z)} \odif z.
    \]
    }
    Now we differentiate with respect to $x$
    \division{
    \[
    0 = \pdv{\alpha}{x} \inv{f(\alpha)} - \inv{f(x)}, \tag{$*$}
    \]
    which we rewrite with \cref{e:alpha} as
    \[
    \pdv{\alpha}{t} = f(x) \pdv{\alpha}{x}.
    \]
    }{
    \[
    \pdv{\gamma}{x} = \pdv{\alpha}{x} \frac{g(\alpha)}{f(\alpha)} - \frac{g(x)}{f(x)},
    \]
    then we use \cref{e:gamma} and ($*$) to obtain
    \[
    \pdv{\gamma}{t} = f(x) \pdv{\gamma}{x} + g(x).
    \]
    }
    These equations suffice to show that $v = e^{\gamma(t, x)} q(\alpha(t, x))$ does satisfy \cref{e:pde}. Therefore, by the unicity of the solution to the Cauchy problem, we have
    \[
    \exp((f(\x)u + g(\x))t) q(x) = e^{\gamma(t, x)} q(x+\phi(t, x)) = e^{\gamma(t, \x)} (e^{\phi(t, \x)})^D q(x),
    \]
    where we have used the notation of \Cref{x:shift}. The above relation holds for all polynomials $q$, hence the equality of the operators
    \begin{equation}\label{e:opeq}
        \exp((f(\x)u + g(\x))t) =  e^{\gamma(t, \x)} (e^{\phi(t, \x)})^D.
    \end{equation}
    where we have rewritten the right-hand side according to \cref{e:shift}. In \cref{e:opeq}, we can replace $(\x, D)$ by $(A, B)$, as long as $[B, A] = 1$. To generalize the result for $[B, A] = p(A)$, we define $\pi$ by $\pi'(t) = 1/p(t)$, to have, by \Cref{l:chain}, $[B, \pi(A)] = 1$, thus we can apply the less general result proved just before. We will do so by replacing $(f, g, A)$ by $(f \circ \pi^{-1}, g \circ \pi^{-1}, \pi(A))$. By choosing wisely the constant term of $\pi$, we can make $\pi^{-1}$ a well-defined power series. If we set $\alpha_0 = \pi^{-1} \circ \alpha$, \cref{e:gamma,e:phi} become
    \[
    \gamma(t, x) = \int_0^t g(\alpha_0(\tau,x)) \odif\tau, \qquad \phi(t, x) = \int_0^t f(\alpha_0(\tau,x)) \odif\tau.
    \]
    $\alpha_0(0) = \pi^{-1}(\alpha(0)) = \pi^{-1}(\pi(A)) =  A$, and by the chain rule and \cref{e:alpha}
    \[
    \odv{\alpha_0}{t} = (\pi^{-1})'(\alpha) \odv{\alpha}{t} = \inv{\pi'(\pi^{-1}(\alpha))} (f \circ \pi^{-1})(\alpha) = p(\alpha_0) f(\alpha_0),
    \]
    which concludes the proof.
\end{proof}

We now provide the most important examples of the theorem. Identification of the coefficients would yield, respectively, the BT and the expansion of $(AB)^n$.
\division{
\begin{corollary}\label{c:V1}
    If $[B, A] = p(A)$ then
    \[
    e^{(A+B)t} = \exp\bra{\int_0^t \alpha(\tau) \odif\tau} e^{B t},
    \]
    where $\alpha$ is the solution to the Cauchy problem
    \[
    \odv{\alpha}{t} = p(\alpha), \quad \alpha(0) = A.
    \]
\end{corollary}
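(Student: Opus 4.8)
The plan is to read off this statement as the special case $f\equiv 1$, $g(x)=x$ of Viskov's theorem (\Cref{t:Viskov}). First I would note that with these choices the exponent becomes $f(A)B+g(A)=B+A=A+B$, so the left-hand side of \Cref{t:Viskov} is exactly $\exp((A+B)t)$, and the Cauchy problem $\odv{\alpha}{t}=p(\alpha)f(\alpha)$, $\alpha(0)=A$ degenerates to $\odv{\alpha}{t}=p(\alpha)$, $\alpha(0)=A$ — precisely the one in the corollary. It then remains only to evaluate the two integrals: $\phi(t)=\int_0^t f(\alpha(\tau))\odif\tau=\int_0^t\odif\tau=t$ and $\gamma(t)=\int_0^t g(\alpha(\tau))\odif\tau=\int_0^t\alpha(\tau)\odif\tau$; note $\gamma(0)=0$, so $e^{\gamma(t)}$ is a genuine formal series in $t$ (and, as a sanity check consistent with \Cref{l:exist}, $\Psi_A(t)\defeq e^{\gamma(t)}$ satisfies $\Psi_A(0)=1$ and $\Psi_A'(0)=\gamma'(0)=\alpha(0)=A$).

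Substituting these into the conclusion of \Cref{t:Viskov} gives $\exp((A+B)t)=e^{\gamma(t)}(e^{\phi(t)})^B=\exp\bra{\int_0^t\alpha(\tau)\odif\tau}\,(e^t)^B$, and the last step is to recognise $(e^t)^B=e^{Bt}$. Since $e^t=e^t\cdot\bm1_\A$ is a scalar multiple of the unit, this is \Cref{p:e^A^B} applied with $t\cdot\bm1_\A$ in place of $A$: $(e^{t\bm1_\A})^B=\sum_{n\ge0}\inv{n!}(t\bm1_\A)^n B^n=\sum_{n\ge0}\frac{t^n}{n!}B^n=e^{Bt}$. (Equivalently, one can expand \cref{e:expo} directly, $(e^t)^B=\sum_k(e^t-1)^k\binom Bk$, and collapse it with \cref{e:stir2gf,e:stir2}.) Plugging this in yields the claimed identity, and the displayed remark about recovering the binomial theorem follows by the Cauchy product and identification of the coefficient of $t^n$.

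I do not expect any real obstacle: the corollary is a direct instantiation of \Cref{t:Viskov}. The one point deserving a moment's care is the reduction $(e^{\phi(t)})^B=e^{Bt}$ — one must use that the formal exponentiation $a^B$ of \cref{e:expo} collapses to the ordinary exponential series exactly when $a$ lies in the scalar copy of $\K$ inside $\A$, which is why \Cref{p:e^A^B} is invoked rather than merely the definition of exponentiation. Everything else (the choice of $f,g$, the degeneration of the Cauchy problem, and the evaluation of $\phi$ and $\gamma$) is immediate.
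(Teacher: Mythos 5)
Your proof is correct and follows the same route as the paper, which presents \Cref{c:V1} precisely as the immediate specialization $f\equiv 1$, $g(x)=x$ of \Cref{t:Viskov} (the paper gives no further argument). Your extra care in justifying $(e^{t})^{B}=e^{Bt}$ via \Cref{p:e^A^B} with the scalar $t\cdot\bm1_\A$ is a sound and welcome detail that the paper leaves implicit.
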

}{
\begin{corollary}\label{c:V2}
    If $[B, A] = p(A)$ then
    \[
    e^{ABt} = \exp\bra{\int_0^t \alpha(\tau) \odif\tau}^B,
    \]
    where $\alpha$ is the solution to the Cauchy problem
    \[
    \odv{\alpha}{t} = \alpha p(\alpha), \quad \alpha(0) = A.
    \]
\end{corollary}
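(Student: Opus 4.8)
The plan is to obtain this as an immediate specialization of Viskov's theorem (\Cref{t:Viskov}), in exact parallel to the way \Cref{c:V1} is obtained by the choice $f(x) = 1$, $g(x) = 0$. Here one instead takes $f(x) = x$ and $g(x) = 0$. With these choices the operator being exponentiated is $f(A)B + g(A) = AB$, so the left-hand side of Viskov's identity reads $\exp(ABt)$, which is precisely the expression we wish to normal order.

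It then remains to read off what the right-hand side of \Cref{t:Viskov} becomes. Since $g \equiv 0$ we get $\gamma(t) = \int_0^t g(\alpha(\tau))\,\odif\tau = 0$, so the scalar prefactor $e^{\gamma(t)}$ is $1$ and disappears. For the exponentiation term, $\phi(t) = \int_0^t f(\alpha(\tau))\,\odif\tau = \int_0^t \alpha(\tau)\,\odif\tau$, so the right-hand side is $\bra{\exp\bra{\int_0^t \alpha(\tau)\,\odif\tau}}^B$ in the sense of the formal exponentiation of \Cref{s:expo}. Finally, the Cauchy problem governing $\alpha$ specializes from $\odv{\alpha}{t} = p(\alpha) f(\alpha)$, $\alpha(0) = A$, to $\odv{\alpha}{t} = \alpha\, p(\alpha)$, $\alpha(0) = A$, which is exactly the problem in the statement. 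Assembling these three observations yields the displayed identity, and identifying the coefficient of $t^n$ on both sides gives the announced normal-ordered expansion of $(AB)^n$.

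I expect no real obstacle here: the only thing to check is that $f(x) = x$ is an admissible input to \Cref{t:Viskov}, and it is, since the theorem only asks $f$ to be a formal power series; the internal reduction steps of Viskov's proof (the change of variable by $\pi$ with $\pi' = 1/p$, and the substitution $z = \alpha$) are inherited and need not be re-examined. The one point worth flagging, perhaps as a remark, is that the function $\alpha$ of this corollary solves a \emph{different} ODE from the $\alpha$ of \Cref{c:V1} — the extra factor $\alpha$ comes from the choice $f(x) = x$ in place of $f(x) = 1$ — so the two should not be conflated.
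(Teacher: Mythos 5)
Your specialization $f(x)=x$, $g(x)=0$ in \Cref{t:Viskov} is exactly how the paper obtains \Cref{c:V2} (it presents it, alongside \Cref{c:V1}, as an immediate instance of Viskov's theorem with no further argument). The proposal is correct and matches the paper's approach.
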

}

The antinormal form of the theorem was also given by Viskov \cite{Viskov97}. 
\begin{theorem}
    Let $p, f, g$ be formal power series and $A, B \in \A$ such that $[B, A] = p(B)$. Then
    \[
    \exp((Af(B) + g(B))t) = (e^A)^{\phi(t)} e^{\gamma(t)},
    \]
    with
    \[
    \phi(t) = \int_0^t f(\beta(\tau)) \odif\tau, \qquad \gamma(t) = \int_0^t g(\beta(\tau)) \odif\tau,
    \]
    where $\beta$ is the solution to the Cauchy problem
    \[
    \odv{\beta}{t} = p(\beta) f(\beta), \quad \beta(0) = B.
    \]
\end{theorem}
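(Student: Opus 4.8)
The plan is to deduce the antinormal version directly from Viskov's theorem (\Cref{t:Viskov}) by applying the $\L$-operator machinery of \Cref{t:L}, exactly in the spirit of how \Cref{p:similar} iii (the antinormal EI) was obtained from the normal EI. First I would set up the ingredients: since $[B,A]=p(B)$, I want an algebra $\A'$ generated by elements $C,D$ with $[D,C]=p(D)$ so that \Cref{t:L} applies; by \Cref{t:L} the map $\L:\A\to\A'$ sending a normal-ordered expression $\sum x_{i,j}A^iB^j$ to $\sum x_{i,j}C^jD^i$ is linear, reverses products, and commutes with power series. Actually the cleanest route is to observe that $[B,A]=p(B)$ is, after swapping roles, the hypothesis of \Cref{t:Viskov} applied with the pair $(B,A)$ in place of $(A,B)$: from $[B,A]=p(B)$ we get $[A,B]=-p(B)$, i.e. $[{-A},B]=p(B)$, so \Cref{t:Viskov} with $A\rightsquigarrow B$, $B\rightsquigarrow -A$ gives a normal-ordered (in $B$ then $-A$) identity, and then one transports it back by reading the expansion from the other side.

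Concretely, I would proceed as follows. Apply \Cref{t:Viskov} to the pair $(B,-A)$, which satisfies $[{-A},B]=p(B)$: this yields
\[
\exp\bra{\bra{f(B)(-A)+g(B)}t} = e^{\gamma(t)}\bra{e^{\phi(t)}}^{-A},
\]
with $\phi(t)=\int_0^t f(\beta(\tau))\odif\tau$, $\gamma(t)=\int_0^t g(\beta(\tau))\odif\tau$, and $\beta$ solving $\odv{\beta}{t}=p(\beta)f(\beta)$, $\beta(0)=B$. This is not quite the claimed formula because the roles of $f(B)$ and $A$ are on the wrong side and there is a sign; so the better move is instead to apply the $\L$-operator to the original (normal) Viskov identity. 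Take $A,B$ with $[B,A]=p(A)$ in $\A$, apply \Cref{t:Viskov}, then hit both sides with $\L$: using $\L(XY)=\L(Y)\L(X)$ and $\L(h(X))=h(\L(X))$ one gets
\[
\exp\bra{\bra{\L(B)\L(f(A))+\L(g(A))}t} = \bra{\L\bra{(e^{\phi(t)})^B}}\,e^{\L(\gamma(t))}.
\]
Writing $C=\L(A)$, $D=\L(B)$ (so $[D,C]=-p(C)$, i.e. with a sign flip), and expanding $(e^{\phi})^B=\sum \inv{n!}\phi(A)^nB^n$ via \Cref{p:e^A^B} so that $\L$ can be applied term by term to a genuine normal-ordered series, one obtains $\L\bra{(e^{\phi(t)})^B}=\sum\inv{n!}D^n\phi(C)^n=(e^{D})^{\phi(C)}$ — wait, one must be careful: $\L$ reverses order, so $\L(\phi(A)^nB^n)=D^n\phi(C)^n$, and this sum is $(e^{\phi(C)})^{D}$ read in antinormal form. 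After relabeling $(C,D)\to(B,A)$ and absorbing the sign of $p$ into a redefinition $p\rightsquigarrow -p$, this should collapse to exactly the stated identity $\exp((Af(B)+g(B))t)=(e^A)^{\phi(t)}e^{\gamma(t)}$ with $\beta$ solving $\odv{\beta}{t}=p(\beta)f(\beta)$, $\beta(0)=B$.

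Alternatively — and perhaps more transparently for the reader — I would just mimic the PDE proof of \Cref{t:Viskov} in mirror image: take $[B,A]=p(B)$, first treat $p(x)=1$ with the concrete model $A=D$, $B=\x$ (so $[\x,D]=[B,A]=1$), consider $u(t,x)=q(x)\exp\bra{(Df(\x)+g(\x))t}$ acting on the right, or equivalently work with the transpose/adjoint operators, derive the transport PDE $\pdv{u}{t}=f(x)\pdv{u}{x}+g(x)u$ with the operators acting from the appropriate side, solve it by the same method of characteristics with $\beta$ in place of $\alpha$, and finally reduce general $p$ to the case $p\equiv 1$ by the substitution $B\rightsquigarrow\pi(B)$ with $\pi'=1/p$, exactly as in the proof of \Cref{t:Viskov}. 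The main obstacle I anticipate is bookkeeping the order-reversal and the sign of $p$ correctly: in the $\L$-approach one must track that $[D,C]=-p(C)$, not $p(C)$, and that $\phi,\gamma,\beta$ defined via $\beta(0)=B$ versus $\beta(0)=C=\L(A)$ match up after relabeling; in the direct PDE approach the subtlety is that $A$ now multiplies $f(B)$ from the left, so one is really solving the PDE for operators acting on the right of $q$, and one must verify that the method of characteristics gives $q(x+\phi(t,x))$ with the correctly-oriented shift $(e^A)^{\phi(t)}$ rather than its reverse. Once those sign and side conventions are pinned down, the remaining computations are identical to those already carried out for \Cref{t:Viskov}, so I would present this proof as "the same argument, read in antinormal form" and only spell out the points where the orientation genuinely matters.
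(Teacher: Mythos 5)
Your main plan --- hitting \Cref{t:Viskov} with the operator $\L$ of \Cref{t:L} --- is exactly the paper's proof, and it does work; but the two bookkeeping points you leave ``to be pinned down'' are precisely the ones your sketch currently resolves incorrectly. First, the ordering: your initial identification was right and your mid-stream ``correction'' is backwards. Since $\L(\phi(t)^nB^n)=\L(B)^n\,\L(\phi(t))^n$, \Cref{p:e^A^B} gives
\[
\L\bra{(e^{\phi(t)})^B}=\sum_{n=0}^\infty\inv{n!}\,\L(B)^n\,\L(\phi(t))^n=(e^{\L(B)})^{\L(\phi(t))},
\]
and after relabeling $\L(B)\to A$, $\L(A)\to B$ this is exactly the $(e^{A})^{\phi(t)}$ of the statement; the symbol $(e^{X})^{Y}$ means $\sum_n X^nY^n/n!$ by definition, so there is no ``antinormal reading'' of it, and $(e^{\phi(C)})^{D}$ would be a genuinely different (and wrong) ordering. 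Second, no replacement of $p$ by $-p$ is needed: \Cref{t:L} is set up with the target generators satisfying $[D,C]=p(D)$, where $D=\L(A)$ and $C=\L(B)$, i.e.\ $[\L(A),\L(B)]=p(\L(A))$, and under the same relabeling this is precisely the hypothesis $[B,A]=p(B)$ of the statement. Your relation $[\L(B),\L(A)]=-p(\L(A))$ is the same identity written with the opposite sign, an artifact of your choice of letters, not something to absorb into $p$; if you actually redefined $p$ as $-p$ you would change the Cauchy problem for $\beta$, hence $\phi$ and $\gamma$, and the claimed formula would not come out. Finally, your fallback of redoing the PDE/characteristics argument ``in mirror image'' is exactly the ``similar proof'' claimed by Viskov that the paper explicitly finds not evident and even doubtful, so it should not be offered as a routine rereading; the $\L$ argument, with the two points above fixed, is already complete and is the route the paper takes.
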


Viskov \cite{Viskov97} claimed that the proof was similar, though this assertion does not immediately appear evident and even seems rather untrue. Nevertheless, with the tools we built, this theorem now emerges as a mere consequence of the preceding ones, sparing us the need to prove the entire theorem anew.

\begin{proof}
    We begin by applying the properties of the operator $\L$ defined in \Cref{t:L} to the equality stated in \Cref{t:Viskov}. On the left-hand side, we have
    \[
    \L(\exp((f(A)B + g(A))t)) = \exp(\L(f(A)B + g(A))t) = \exp(C f(D) + g(D))t),
    \]
    (still with $[D, C] = p(D)$) while on the right-hand side
    \[
    \L(e^{\gamma(t)} (e^{\phi(t)})^B) = \L((e^{\phi(t)})^B) \L(e^{\gamma(t)}) = (e^D)^{\L(\phi(t))} e^{\L(\gamma(t))}.
    \]
    Hence, the equality
    \[
    \exp(C f(D) + g(D))t) = (e^D)^{\L(\phi(t))} e^{\L(\gamma(t))},
    \]
    which is essentially a renaming of the variables of the original statement.
\end{proof}

And once again, the important instances are given below.
\division{
\begin{corollary}
    If $[B, A] = p(B)$ then
    \[
    e^{(A+B)t} = e^{A t} \exp\bra{\int_0^t \beta(\tau) \odif\tau},
    \]
    where $\beta$ is the solution to the Cauchy problem
    \[
    \odv{\beta}{t} = p(\beta), \quad \beta(0) = B.
    \]
\end{corollary}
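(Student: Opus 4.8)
The plan is to obtain this corollary as a direct specialization of the antinormal form of Viskov's theorem stated just above, in exactly the way \Cref{c:V1} is read off from \Cref{t:Viskov}. Concretely, I would invoke that theorem with the two (admissible) formal power series $f(x) = 1$ and $g(x) = x$. With this choice the operator $A f(B) + g(B)$ becomes $A + B$, so the left-hand side of the theorem is precisely $\exp((A+B)t)$, which is what we want.

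It then remains to simplify the right-hand side. With $f \equiv 1$ the Cauchy problem $\odv{\beta}{t} = p(\beta) f(\beta)$, $\beta(0) = B$ collapses to $\odv{\beta}{t} = p(\beta)$, $\beta(0) = B$, which is exactly the Cauchy problem appearing in the statement. Moreover $\phi(t) = \int_0^t f(\beta(\tau)) \odif\tau = \int_0^t \odif\tau = t$ and $\gamma(t) = \int_0^t g(\beta(\tau)) \odif\tau = \int_0^t \beta(\tau) \odif\tau$, so the theorem delivers
\[
e^{(A+B)t} = (e^A)^{t}\, \exp\bra{\int_0^t \beta(\tau) \odif\tau}.
\]
To finish, I would identify $(e^A)^t$ with $e^{At}$: here $t$ is a scalar, assimilated to $t\cdot\bm1_\A$, so \Cref{p:e^A^B} (applied with its element $B$ taken to be $t\cdot\bm1_\A$) gives $(e^A)^t = \sum_{n\ge0}\inv{n!} A^n t^n = e^{At}$; alternatively one may use the first point of \Cref{p:expo}, since $A$ commutes with $t\cdot\bm1_\A$. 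Substituting this identity yields the claimed formula, and identifying coefficients of $t^n$ on both sides would then produce the corresponding antinormal binomial theorem.

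The argument is essentially bookkeeping, so there is no genuine obstacle; the only points demanding a little care are checking that the substitution $f\equiv1$, $g(x)=x$ meets the hypotheses of the antinormal theorem (it does, trivially, since constants and $x$ are power series) and that the two integrals simplify as claimed. One could equally well prove the corollary by applying the operator $\L$ of \Cref{t:L} to \Cref{c:V1}, mirroring the derivation of the antinormal theorem from \Cref{t:Viskov}, but the direct specialization above is the shortest route.
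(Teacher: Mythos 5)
Your specialization $f\equiv 1$, $g(x)=x$ of the antinormal form of Viskov's theorem, together with the identification $(e^A)^t = e^{At}$ for scalar $t$, is exactly how the paper intends this corollary to be read off (it states it without proof as an immediate instance of that theorem, mirroring how \Cref{c:V1} follows from \Cref{t:Viskov}). The argument is correct and takes essentially the same route as the paper.
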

}{
\begin{corollary}
    If $[B, A] = p(B)$ then
    \[
    e^{ABt} = (e^A)^{{\textstyle\int}_{\mkern-6mu0}^t \beta(\tau) \odif\tau},
    \]
    where $\beta$ is the solution to the Cauchy problem
    \[
    \odv{\beta}{t} = \beta p(\beta), \quad \beta(0) = B.
    \]
\end{corollary}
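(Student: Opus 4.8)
The plan is to obtain this statement as an immediate specialization of the antinormal form of Viskov's theorem just established, in exact parallel with the way \Cref{c:V2} is read off from \Cref{t:Viskov}. In that antinormal theorem the exponentiated operator is $A f(B) + g(B)$, and the conclusion reads $\exp((Af(B)+g(B))t) = (e^A)^{\phi(t)} e^{\gamma(t)}$ with $\phi(t) = \int_0^t f(\beta(\tau)) \odif\tau$, $\gamma(t) = \int_0^t g(\beta(\tau)) \odif\tau$, and $\beta$ the solution of $\odv{\beta}{t} = p(\beta) f(\beta)$, $\beta(0) = B$. I would simply take $f(x) = x$ and $g(x) = 0$. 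Then $A f(B) + g(B) = AB$, so the left-hand side is exactly $e^{ABt}$; since $g \equiv 0$ we have $\gamma(t) = 0$ and hence $e^{\gamma(t)} = 1$; we have $\phi(t) = \int_0^t \beta(\tau) \odif\tau$; and the Cauchy problem for $\beta$ becomes $\odv{\beta}{t} = p(\beta)\,\beta = \beta\,p(\beta)$ with $\beta(0) = B$, which is precisely the one in the statement.

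Feeding these choices into the antinormal theorem yields $e^{ABt} = (e^A)^{\int_0^t \beta(\tau) \odif\tau} \cdot 1$, which is the asserted identity, so no further work is required. The only points worth a remark — and both are handled exactly as for \Cref{c:V2} — are that $f(x) = x$ and $g(x) = 0$ are admissible formal power series, and that the right-hand side is a bona fide formal power series in $t$: the exponent $\int_0^t \beta(\tau) \odif\tau$ has vanishing constant term in $t$, so in the defining expansion \cref{e:expo} the term $\binom{\int_0^t \beta(\tau) \odif\tau}{n}$ contributes only to orders $\geq n$ in $t$, and the series converges formally.

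I do not anticipate any genuine obstacle. The substantive content was already discharged when the antinormal Viskov theorem was deduced from \Cref{t:Viskov} via the operator $\L$ of \Cref{t:L}, and the present corollary is its ``$e^{AB}$'' instance, mirroring the way \Cref{c:V2} is the ``$e^{AB}$'' instance of \Cref{t:Viskov}. If one wished to bypass the antinormal theorem, the same identity could instead be reached by applying $\L$ directly to \Cref{c:V2} and then using \Cref{p:e^A^B} to rewrite the resulting mixed series as an exponentiation; in either case, identifying coefficients of $t^n$ on the two sides would additionally produce the corresponding expansion of $(AB)^n$.
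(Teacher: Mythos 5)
Your specialization $f(x)=x$, $g(x)=0$ of the antinormal form of Viskov's theorem is exactly how the paper obtains this corollary (it states it without proof as an immediate instance, parallel to \Cref{c:V2}), and all your checks are sound. Correct, same approach.
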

}

\section{Commutator of the form of a monomial}\label{s:monomial}

In this section, we explore the normal ordered form of $(AB)^n$ and $(A+B)^n$ when the commutator is a monomial in $A$. Although the former has been previously examined in the case where $[B, A] = h A^s$, we will now demonstrate a closely related formula for the binomial theorem when $[B, A] = h A^{s+1}$, utilizing Viskov's theorem.

\subsection{Normal ordered form of \texorpdfstring{$(AB)^n$}{(AB)\string^n}}
Mansour, Schork and Shattuck proved in \cite{GenStir} that if $A$ and $B$ are generators of the \emph{generalized Weyl algebra} \cite{book}, that is, $[B, A] = h A^s$, then we could express
\begin{equation}\label{e:MSS}
    (AB)^n = \sum_{k=0}^n \St_s(n,k) h^{n-k} A^{s(n-k)+k} B^k,
\end{equation}
where $\St_s$ represents the \emph{generalized Stirling number}, defined as follows:
\begin{equation}
    \St_0(n,k) = \Stir{n}{k}, \qquad \St_1(n,k) = \coeff{n}{k},
\end{equation}
and for $s \in \K \setminus \{0, 1\}$
\begin{align}
    \St_s(n,k)
    &= \frac{n! s^n}{k! (1-s)^k} \sum_{j=0}^k (-1)^{k-j} \binom{k}{j} \binom{n + j(1/s -1) - 1}{n} \\
    &= \inv{k!} \sum_{j=0}^k (-1)^{k-j} \binom{k}{j}\sum_{i=0}^n \coeff{n}{i} \frac{s^{n-i}j^i}{(1-s)^{k-i}}. \label{e:miss}
\end{align}
It appears that they overlooked the fact that by exchanging the sums in \cref{e:miss} and utilizing \cref{e:stirformula}, we could derive the improved closed form below, without any degenerate cases
\begin{equation}
    \St_s(n,k) = \sum_{j=k}^n \coeff{n}{j} \Stir{j}{k} s^{n-j} (1-s)^{j-k}.
\end{equation}
Their proof employed the generating function
\begin{equation}\label{e:stirgengf}
\sum_{n=k}^\infty \St_s(n,k) \frac{t^n}{n!} = \frac{\mathfrak F_s(t)^k}{k!}, 
\end{equation}
where
\begin{equation}
    \mathfrak F_s(t) \defeq \begin{cases}
        e^t-1 &\com{if} s=0 \\
    -\log(1-t) &\com{if} s=1 \\[3pt]
    \dfrac{1-(1-st)^{1-1/s}}{s-1} &\com{else.}
    \end{cases}
\end{equation}

We slightly adjusted the notation for $\St_s(n,k)$ to extract the dependency on $h$, and adjusted the lower index of the sum in \cref{e:MSS}. We set it to $0$ instead of $1$ because $\St_s(n,0) = \delta_n$. It is generally good practice to let the lower index be $0$ to also include the trivial case $n=0$.

\begin{example}\label{x:Lah}
    The obvious examples are the cases $s = 0$ or $1$, but another interesting one is the case $s = 1/2$. In fact, the formula for $\St_{1/2}$ simplifies into \emph{Lah numbers} $\Lah{n}{k} \defeq \binom{n-1}{k-1} \frac{n!}{k!}$ \cite[p.~156]{Comtet}, indeed
    \begin{equation}
        \St_{1/2}(n,k) =  2^{k-n} \sum_{j=k}^n \coeff{n}{j} \Stir{j}{k} = 2^{k-n} \Lah{n}{k},
    \end{equation}
    where we used an identity that can be easily proved by composing the Stirling generating functions (\cref{e:stir1gf,e:stir2gf}). We can rewrite \cref{e:MSS} with $[B, A] = 2 h \sqrt A$ into
    \begin{equation}
        (AB)^n = \sum_{k=0}^n \Lah{n}{k} h^{n-k} A^{\tfrac{n+k}{2}} B^k.
    \end{equation}
\end{example}

The exponential version of the theorem proved by Mansour, Schork and Shattuck is the following.
\begin{corollary}\label{c:expMSS}
    If $[B, A] = h A^s$ then
    \[
    e^{ABt} = \exp\bra{\frac{\mathfrak F_s(hA^st)}{hA^{s-1}}}^B.
    \]
\end{corollary}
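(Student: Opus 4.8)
The plan is to read off \Cref{c:expMSS} from \Cref{c:V2}, which reduces the entire statement to solving a single scalar Cauchy problem. Applying \Cref{c:V2} with $p(x) \defeq hx^s$ — so that the hypothesis $[B,A] = hA^s$ is exactly $[B,A] = p(A)$ — gives
\[
e^{ABt} = \exp\bra{\int_0^t \alpha(\tau)\,\odif\tau}^{\!B}, \qquad \odv{\alpha}{t} = \alpha\,p(\alpha) = h\alpha^{s+1}, \quad \alpha(0) = A.
\]
Here $\alpha$ is a formal power series in $t$ whose coefficients all lie in the commutative subalgebra generated by $A$, so every computation below takes place in a commutative ring and the ordinary rules of calculus apply verbatim; the only thing to watch is that each expression we write down stays a genuine formal power series in $t$.

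First I would solve the Cauchy problem by separation of variables. For $s\neq0$, from $\alpha^{-s-1}\,\odif\alpha = h\,\odif t$ one gets $\odif(\alpha^{-s}) = -sh\,\odif t$, hence $\alpha^{-s} = A^{-s} - sht = A^{-s}(1 - shA^st)$ and therefore
\[
\alpha(t) = A\,(1 - shA^st)^{-1/s}.
\]
Expanding the binomial series shows $\alpha(t) = A + O(t)$, with coefficients that are (polynomial) expressions in $A$ and $A^s$, so this is well-defined formally even if $A$ is not invertible, despite the suggestive $A^{-s}$ appearing midway. For $s=0$ the equation is linear, $\alpha' = h\alpha$, giving $\alpha(t) = Ae^{ht}$.

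Then I would integrate and match with the definition of $\mathfrak F_s$. Writing $c \defeq shA^s$, a one-line antiderivative computation yields $\int_0^t (1-c\tau)^{-1/s}\,\odif\tau = \dfrac{1-(1-ct)^{1-1/s}}{c(1-1/s)}$, and since $c(1-1/s) = hA^s(s-1)$, multiplying through by $A$ gives
\[
\int_0^t \alpha(\tau)\,\odif\tau = \frac{1}{hA^{s-1}}\cdot\frac{1-(1-shA^st)^{1-1/s}}{s-1} = \frac{\mathfrak F_s(hA^st)}{hA^{s-1}}
\]
for $s\notin\{0,1\}$ by definition of $\mathfrak F_s$. The two exceptional values are handled identically (or by letting $s\to0,1$): the integral of $A(1-hAt)^{-1}$ is $-\tinv h\log(1-hAt) = \mathfrak F_1(hAt)/(hA^{0})$, and the integral of $Ae^{h\tau}$ is $\tinv h A(e^{ht}-1) = \mathfrak F_0(ht)/(hA^{-1})$, matching the $s=1$ and $s=0$ branches of $\mathfrak F_s$. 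Substituting this value of $\int_0^t\alpha$ into the displayed consequence of \Cref{c:V2} finishes the proof.

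I do not anticipate a genuine obstacle: \Cref{c:V2} does all the heavy lifting, and the scalar ODE is elementary. The only care required is the bookkeeping that keeps $\alpha$, its integral, and the final ratio bona fide formal power series in $t$ — in particular, that ``dividing by $hA^{s-1}$'' merely strips the common factor $hA^{s-1}t$ from a series beginning at order $t$ — together with verifying that the three branches of the piecewise definition of $\mathfrak F_s$ correspond to the three branches of the ODE solution.
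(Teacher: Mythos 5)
Your proposal is correct, but it takes a genuinely different route from the paper. The paper does not touch Viskov's theorem at this point: it proves \Cref{c:expMSS} by summing the Mansour--Schork--Shattuck normal-ordering formula \cref{e:MSS} for $(AB)^n$ against $t^n/n!$, exchanging the order of summation, and then invoking the generating function \cref{e:stirgengf} of the generalized Stirling numbers together with \Cref{p:e^A^B} to repackage the double sum as $\exp\bra{\mathfrak F_s(hA^st)/(hA^{s-1})}^B$. You instead bypass \cref{e:MSS} entirely: you apply \Cref{c:V2} with $p(x)=hx^s$ and solve the scalar Cauchy problem $\odv{\alpha}{t}=h\alpha^{s+1}$, $\alpha(0)=A$, in closed form, checking that $\int_0^t\alpha(\tau)\odif\tau$ reproduces each of the three branches of $\mathfrak F_s$. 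Your computations are right (the exponent $A(1-shA^st)^{-1/s}$ does satisfy the ODE formally, its binomial coefficients involve no division by $s$ or by $A$, and the three branches match), and the route is legitimate since \Cref{c:V2} is established in the paper independently of \cref{e:MSS}. What each approach buys: the paper's derivation treats \cref{e:MSS} as input and keeps the combinatorial content (generalized Stirling numbers) in the foreground, and it also sidesteps any ODE manipulation with the formal symbol $A^s$; your derivation is self-contained within Viskov's machinery and in fact establishes the identification \cref{e:identify} directly --- which the paper only obtains later, in the proof of \Cref{t:As+1}, precisely by comparing \Cref{c:V2} with \Cref{c:expMSS} --- so under your route the MSS expansion \cref{e:MSS} becomes a recoverable consequence (expand your exponent with \cref{e:stirgengf} and \Cref{p:e^A^B}) rather than a prerequisite. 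Two small presentational cautions: since separation of variables passes through the non-formal intermediate $\alpha^{-s}=A^{-s}-sht$, it is cleaner to state that the formal solution is unique (its coefficients are determined recursively) and then verify the closed-form candidate directly; and the interpretation of $\alpha^{s+1}$ for non-integer $s$ deserves the same formal caveat the paper itself implicitly accepts when it applies \Cref{c:V2} with $p(x)=hx^s$ in the proof of \Cref{t:As+1}.
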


\begin{proof}
    By \cref{e:MSS}
    \begin{align*}
        e^{ABt}
        &= \sum_{n=0}^\infty \frac{t^n}{n!} (AB)^n = \sum_{n=0}^\infty \frac{t^n}{n!} \sum_{k=0}^n \St_s(n,k) h^{n-k} A^{s(n-k)+k} B^k \\
        &= \sum_{k=0}^\infty \bra{\sum_{n=k}^\infty \frac{t^n}{n!} \St_s(n,k) h^n A^{sn}} h^{-k} A^{(1-s)k} B^k,
    \end{align*}
    now we use \cref{e:stirgengf} and \Cref{p:e^A^B}
    \[
    e^{ABt} =  \sum_{k=0}^\infty \frac{\mathfrak F_s(hA^st)^k}{k!} h^{-k} A^{(1-s)k} B^k = \exp\bra{\frac{\mathfrak F_s(hA^st)}{hA^{s-1}}}^B.
    \]
\end{proof}

\subsection{Binomial theorem}

The EI was found in \cite[Corollary 8.57]{book} with \Cref{c:V1}, but the connection with generalized Stirling numbers was missed.

\begin{theorem}\label{t:As+1}
    Let $A$ and $B$ be two elements of $\A$ and $h$ and $s$ two complex numbers. Then these statements are equivalent:
    \begin{enumerate}[label=\textnormal{\roman*.}]
        \item \makebox[\linewidth]{$[B, A] = h A^{s+1}$}
        \item for $n \in \N$, \begin{align*}
            (A+B)^n
            &= \sum_{k=0}^n \binom{n}{k} \bra{\sum_{j=0}^k \St_s(k,j) h^{k-j} A^{s(k-j)+j}}B^{n-k} \\
            &= \sum_{k=0}^n h^{n-k} \sum_{j=0}^k \binom{n}{j} \St_s(n-j, k-j) A^{s(n-k)+k-j} B^j
        \end{align*}
        \item for $t \in \K$, \[
        e^{(A+B)t} =  \exp\bra{\frac{\mathfrak F_s(hA^st)}{hA^{s-1}}} e^{Bt}.
        \]
    \end{enumerate}
\end{theorem}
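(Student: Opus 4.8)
The plan is to prove the equivalences by passing through the exponential identity (iii), since iii is the version that connects most directly to Viskov's machinery of \Cref{s:Viskov}. First I would establish (i $\Leftrightarrow$ iii). Given (i), we have $[B, A] = p(A)$ with $p(x) = h x^{s+1}$, so \Cref{c:V1} applies and yields $e^{(A+B)t} = \exp\bigl(\int_0^t \alpha(\tau)\odif\tau\bigr) e^{Bt}$ where $\alpha$ solves $\odv{\alpha}{t} = h\alpha^{s+1}$, $\alpha(0) = A$. This is a separable scalar ODE (treating $A$ as the initial datum): separating variables gives $\alpha^{-s-1}\odif\alpha = h\odif t$, hence $\tfrac{\alpha^{-s}}{-s} = ht + \text{const}$, and imposing $\alpha(0)=A$ one solves for $\alpha(\tau)$ explicitly. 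Then $\int_0^t\alpha(\tau)\odif\tau$ must be computed and shown to equal $\mathfrak F_s(hA^st)/(hA^{s-1})$; this is where I expect the bookkeeping with the three cases $s=0$, $s=1$, $s\notin\{0,1\}$ in the definition of $\mathfrak F_s$ to require care, but it is a routine (if fiddly) antidifferentiation matching against the closed forms $e^t-1$, $-\log(1-t)$, $(1-(1-st)^{1-1/s})/(s-1)$. Conversely, given (iii), I would differentiate $e^{(A+B)t} = \exp\bigl(\mathfrak F_s(hA^st)/(hA^{s-1})\bigr)e^{Bt}$ twice at $t=0$ and use \cref{e:comid} exactly as in the $(\text{ii}\imp\text{i})$ arguments of \Cref{t:1ea} and \Cref{l:exist}: the first two Taylor coefficients of $\exp(\mathfrak F_s(hA^st)/(hA^{s-1}))$ in $t$ are $1$ and $A$ (since $\mathfrak F_s(u) = u + O(u^2)$ in all three cases), and the $t^2$-coefficient produces $\tfrac12(2AB + B^2 + [B,A] + \ldots)$, from which $[B,A] = hA^{s+1}$ falls out.

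Next I would derive (iii $\Leftrightarrow$ ii), the binomial theorem, by expanding the exponential identity in powers of $t$ and matching coefficients, mirroring the computation in \Cref{c:expMSS}. Starting from the right-hand side of (iii), I write $\exp\bigl(\mathfrak F_s(hA^st)/(hA^{s-1})\bigr) = \sum_{j\geq 0}\tfrac{1}{j!}\mathfrak F_s(hA^st)^j\,(hA^{s-1})^{-j}$, then invoke the generating function \cref{e:stirgengf} to replace $\mathfrak F_s(hA^st)^j/j!$ by $\sum_{k\geq j}\St_s(k,j)\tfrac{(hA^s t)^k}{k!}$. Collecting powers of $t$ gives the coefficient of $t^k/k!$ equal to $\sum_{j=0}^k \St_s(k,j) h^{k-j} A^{s(k-j)+j}$ — note the powers of $h$ and $A$ combine to $h^{k-j}A^{sk - (s-1)j}= h^{k-j}A^{s(k-j)+j}$ as claimed. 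Then the Cauchy product with $e^{Bt} = \sum_n \tfrac{t^n}{n!}B^n$ and identification of the coefficient of $t^n/n!$ produces the first displayed formula in (ii). The second formula is obtained from the first by the orthogonal resummation of the triangular array, exactly \cref{e:sumarray} as used in the proof of \Cref{c:le0}. Reversing the steps (the generating-function manipulations and Cauchy product are all invertible at the level of formal power series) gives ii $\imp$ iii, completing the loop i $\Rightarrow$ iii $\Rightarrow$ ii $\Rightarrow$ iii $\Rightarrow$ i, or more economically i $\Leftrightarrow$ iii and ii $\Leftrightarrow$ iii.

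The main obstacle will be the explicit integration of the Cauchy problem $\odv{\alpha}{t} = h\alpha^{s+1}$ and the verification that $\int_0^t\alpha(\tau)\odif\tau = \mathfrak F_s(hA^st)/(hA^{s-1})$ uniformly across the three regimes of $s$. For generic $s$, solving gives $\alpha(\tau) = A(1 - shA^s\tau)^{-1/s}$, so $\int_0^t\alpha = A\cdot\tfrac{(1-shA^st)^{1-1/s}-1}{shA^s(1/s - 1)} = \tfrac{1-(1-shA^st)^{1-1/s}}{hA^{s-1}(s-1)}$, which indeed matches $\mathfrak F_s(hA^st)/(hA^{s-1})$; the cases $s=0$ (linear ODE, $\alpha = Ae^{h\tau}$, $\alpha(0)=A$) and $s=1$ (giving $\alpha = A/(1-hA t)$, whose integral is $-\tfrac1h\log(1-hAt)$) must be checked separately and shown to be the $s\to 0$ and $s\to 1$ limits. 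One should also be slightly careful that these manipulations are legitimate at the formal level — e.g. that $(1-shA^st)^{1-1/s}$ is a well-defined formal power series in $t$ with coefficients in $\A$ — but since everything is expanded as a power series in $t$ and $A$ commutes with itself, no genuine noncommutativity issue arises in this step; the only place commutativity could matter, the ordering of $A$'s relative to $B$'s, is handled cleanly by \Cref{p:e^A^B} and the definition of exponentiation, exactly as in \Cref{c:expMSS}.
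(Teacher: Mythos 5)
Your proof is correct, and its skeleton matches the paper's: i $\Leftrightarrow$ iii through \Cref{c:V1}, iii $\Leftrightarrow$ ii by expanding with \cref{e:stirgengf} and the Cauchy product, \cref{e:sumarray} for the second form of ii, and \cref{e:comid} to close the cycle. The genuine difference is the central step, the evaluation of $\int_0^t\alpha(\tau)\,\odif\tau$ for the Cauchy problem $\odv{\alpha}{t}=h\alpha^{s+1}$, $\alpha(0)=A$. You solve the ODE explicitly ($\alpha(\tau)=A(1-shA^s\tau)^{-1/s}$ for generic $s$, with $s=0$ and $s=1$ handled separately) and antidifferentiate, matching the three closed forms of $\mathfrak F_s$ by hand; this works, but needs the case bookkeeping and the formal-power-series justification you flag. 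The paper never solves the ODE: it notes that the very same Cauchy problem arises from \Cref{c:V2} applied to the commutator $[B,A]=hA^{s}$ (there $\odv{\alpha}{t}=\alpha\,p(\alpha)=h\alpha^{s+1}$), and then reads off the identity $\int_0^t\alpha(\tau)\,\odif\tau=\mathfrak F_s(hA^st)/(hA^{s-1})$ by comparing \Cref{c:V2} with the already-established \Cref{c:expMSS}, i.e.\ with the Mansour--Schork--Shattuck normal ordering \cref{e:MSS}. That identification avoids all case analysis and is exactly the degree-$s$ to degree-$(s+1)$ ``bridge'' the paper advertises right after the proof; your route is more self-contained for this step (it does not lean on \cref{e:MSS}) at the cost of the explicit integration. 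Finally, you recover i from iii by differentiating twice at $t=0$ (using that $\mathfrak F_s(u)=u+O(u^2)$), while the paper recovers i from $(A+B)^2$ in ii via \cref{e:comid}; the two closures are interchangeable.
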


\begin{proof}
    Suppose that $[B, A] = h A^s$. Then, by \Cref{c:V2}
    \[
    e^{ABt} = \exp\bra{\int_0^t \alpha(\tau) \odif\tau}^B,
    \]
    with
    \[
    \odv{\alpha}{t} = h \alpha^{s+1}, \quad \alpha(0) = A. \tag{$*$}
    \]
    But by \Cref{c:expMSS}, we can identify
    \begin{equation}\label{e:identify}
        \int_0^t \alpha(\tau) \odif\tau = \frac{\mathfrak F_s(hA^st)}{hA^{s-1}}.
    \end{equation}
    Now suppose i, that is, $[B, A] = h A^{s+1}$, then by \Cref{c:V1}
    \[
    e^{(A+B)t} = \exp\bra{\int_0^t \alpha(\tau) \odif\tau} e^{Bt},
    \]
    with $\alpha$ still defined by $(*)$. Thus, we can make use of \cref{e:identify} to obtain iii. The first formula of ii follows from expanding the EI and using \cref{e:sumarray}, we arrive at the second formula. To close the cycle of equivalences, knowing ii, we can simply use \cref{e:comid} to recover i.
\end{proof}

We could take advantage of the same trick used in the proof to derive, for example, one of the following formulas using the other:
\begin{itemize}
    \item The normal ordering of $(AB)^n$, when $A$ and $B$ are generators of the generalized Ore algebra, i.e., when $[B, A] = \alpha + \epsilon A$ (found in \cite{Ore})
    \item The BT when $[B, A] = A(\alpha + \epsilon A)$ (given by \Cref{c:le0})
\end{itemize}

The formulas for $\frac{\mathfrak F_s(hA^st)}{hA^{s-1}}$ in the cases $s = -1$, $0$ or $1$ are, respectively, the divisionless $At + h \frac{t^2}{2}$, $A \frac{e^{ht}-1}{h}$, and $-\log(1-hAt)$, while the other cases still involve division in their closed forms. Consequently, it is intriguing to observe that if $s+1 \in \N \setminus \{0, 1, 2\}$, then the exponential identity cannot be expressed solely in terms of elementary functions without division (although they can in a power series form). This observation sheds light on the discussion in \Cref{s:second}, where we discuss how if the degree of the commutator is not $0$, $1$ or $2$, then finding the exponential identity becomes more challenging. Thus, one of the reasons for this difficulty might be related to the appearance of division by a power of $A$.

\begin{example}
    If $[B, A] = h A^3$, then the EI (with division) is given by
    \[
    e^{(A+B)t} = e^{\tfrac{1-\sqrt{1-2 h A^2 t}}{h A}} e^{B t}.
    \]
    This case is associated with \emph{Bessel polynomials} \cite{StirBessel}. Indeed, according to Krall and Frink \cite{Bessel}, Bessel polynomials $\{y_n\}_{n\in\N}$, defined by
    \[
    y_n(x) \defeq \sum_{k=0}^n \frac{(n+k)!}{(n-k)!k!}\bra{\frac{x}{2}}^k = \sum_{k=0}^n \binom{n+k}{2k}(2k-1)!! \, x^k,
    \]
    satisfy this generating function formula
    \[
    \sum_{n=0}^\infty \frac{t^n}{n!} y_{n-1}(x)  = e^{\tfrac{1-\sqrt{1-2xt}}{x}}.
    \]
    Hence, the alternate expression for the EI (which requires no division):
    \[
    e^{(A+B)t} = \sum_{n=0}^\infty \frac{(At)^n}{n!} y_{n-1}(h A) e^{Bt}.
    \]
    Furthermore, the binomial theorem can be written as
    \[
    (A+B)^n = \sum_{k=0}^n \binom{n}{k} \bra{\sum_{j=0}^k \binom{k-1+j}{2j} (2j-1)!! \, h^j A^{k+j}} B^{n-k},
    \]
    which is at a change of index away from \Cref{t:As+1}, indicating that we have found the following closed form:
    \begin{equation}
        \St_2(n,n-k) = \binom{n-1+k}{2k}(2k-1)!!.
    \end{equation}
\end{example}

\section{Conclusion}

In our research, we have developed exponentiation and general tools for exploring noncommutative algebra. This approach enabled us to derive exponential identities and certain binomial theorems in cases where the commutator $[B, A] = f(A)$ takes the form of a polynomial of degree two or less. We also delved into the potential absence of a normal ordered form when the commutator involves two variables, providing illustrative examples to support our discussion.

We have shed light on an earlier theorem by Viskov \cite{Viskov97}, which serves as a powerful tool for solving numerous problems in normal ordering. The antinormal form was proved via the new operator $\L$. Viskov's theorem facilitated a connection between the exponential identity in cases where $f$ is a monomial of arbitrary degree and the normal ordered form of $(AB)^n$ when $f$ is one degree less, which was already documented \cite{GenStir, GenStir2, book}. This connection enabled the discovery of analogous formulas for both the exponential identity and the binomial theorem.

Further generalizing these findings by increasing the degree of the arbitrary polynomial $f$ appears to be a challenging task, as discussed. Furthermore, exploring the case of a bivariate commutator remains an intriguing yet somewhat enigmatic area of study. Extending the work outlined in \Cref{s:bivariate} and in previous literature \cite{Rosengren,Levandovskyy,BCHclosed,BCHLie,Zassclosed} could provide valuable insights into noncommutative algebra.

Moreover, our definition of a noncommutative exponentiation and our analysis of its properties in \Cref{s:expo} suggests an analogous study in noncommutative cases that should be related to some of the noncommutative exponential identities that were derived here and in the past.

Finally, now that we better understand regular normal ordering, investigating $q$-analogues seems to be another avenue of exploration. Investigating binomial theorems under conditions $BA - qAB = f(A)$, as studied in \cite{Benaoum99,qanalogue,book,Levandovskyy}, could offer further depth to our understanding of $q$-calculus and noncommutative algebra.

\subsubsection*{Acknowledgement}

I would like to thank M. Schork for his precious advice and for sharing one of the references, as well as the anonymous referees that helped improve the quality of this paper.

\bibliographystyle{plain}
\bibliography{refs}

\end{document}